    \newcommand{\Rmnum}[1]
    {\expandafter\@slowromancap\romannumeral #1@}
\def\wz{\tilde}
\newtheorem{thm}{Theorem}[section]
\newtheorem{prop}[thm]{Proposition}
\newtheorem{lemma}[thm]{Lemma}
\newcounter{fooo}[section]
\newtheorem{step}[fooo]{Step}
\newtheorem{example}[thm]{Example}
\newtheorem{defin}[thm]{Definition}
\theoremstyle{definition}
\newtheorem{remark}[thm]{Remark}
\begin{document}
\begin{CJK*}{GBK}{song}

\renewcommand{\baselinestretch}{1.3}
\title{Weakly distance-regular digraphs whose underlying graphs are distance-regular, I}

\author{Yuefeng Yang\textsuperscript{a} ~ Qing Zeng\textsuperscript{b}\footnote{\scriptsize Corresponding author.}  ~~  Kaishun Wang\textsuperscript{b}
\\
{\footnotesize  \textsuperscript{a} \em School of Science, China University of Geosciences, Beijing, 100083, China} \\
{\footnotesize  \textsuperscript{b} \em  Laboratory of Mathematics and Complex Systems (MOE),}  \\
{\footnotesize  \em School of Mathematical Sciences, Beijing Normal University, Beijing, 100875, China } }

\date{}
\maketitle
\footnote{\scriptsize\qquad{\em E-mail address:} ~yangyf@cugb.edu.cn~~(Yuefeng Yang),~~qingz@mail.bnu.edu.cn ~~(Qing Zeng),~~ wangks@bnu.edu.cn ~~(Kaishun Wang).}

\begin{abstract}

Weakly distance-regular digraphs are a natural directed version of distance-regular graphs. In \cite{KSW03}, the third  author and Suzuki proposed a question when an orientation of a distance-regular graph defines a weakly distance-regular digraph. In this paper, we initiate this project, and classify all commutative weakly distance-regular digraphs whose underlying graphs are Hamming graphs, folded $n$-cubes and Doob graphs, respectively.
\medskip

\noindent {\em AMS classification:} 05E30

\noindent {\em Key words:} Weakly distance-regular digraph; association scheme; Hamming graph;  folded $n$-cube; Doob graph
\end{abstract}

\section{Introduction}

A \emph{digraph} $\Gamma$ is a pair $(V(\Gamma),A(\Gamma))$ where $V(\Gamma)$ is a finite nonempty set of vertices and $A(\Gamma)$ is a set of ordered pairs ({\em arcs}) $(x,y)$ with distinct vertices $x$ and $y$. For any arc $(x,y)\in A(\Gamma)$, if $A(\Gamma)$ also contains an arc $(y,x)$, then $\{(x,y),(y,x)\}$ can be viewed as an {\em edge}. We say that $\Gamma$ is an \emph{undirected graph} or a {\em graph} if $A(\Gamma)$ is a symmetric relation.  A vertex $x$ is {\em adjacent} to $y$ if $(x,y)\in A(\Gamma)$. In this case, we also call $y$ an \emph{out-neighbour} of $x$, and $x$ an \emph{in-neighbour} of $y$. For every vertex $x$ of $\Gamma$, we denote the number of in-neighbours (resp. out-neighbours) of $x$ by $d^{-}_{\Gamma}(x)$ (resp. $d^{+}_{\Gamma}(x)$). A digraph is said to be \emph{regular of valency $k$} if the number of in-neighbours and out-neighbours of all vertices are all equal to $k$.

A \emph{path} of length $r$ from $x$ to $y$ in $\Gamma$ is a finite sequence of vertices $(x=w_{0},w_{1},\ldots,w_{r}=y)$ such that $(w_{t-1}, w_{t})\in A(\Gamma)$ for $1\leq t\leq r$. A digraph (resp. graph) is said to be \emph{strongly connected} (resp. \emph{connected}) if, for any two vertices $x$ and $y$, there is a path from $x$ to $y$. A path $(w_{0},w_{1},\ldots,w_{r-1})$ is called a \emph{circuit} of length $r$ when $(w_{r-1},w_0)\in A\Gamma$. The \emph{girth} of $\Gamma$ is the length of a shortest circuit in $\Gamma$.

The length of a shortest path from $x$ to $y$ is called the \emph{distance} from $x$ to $y$ in $\Gamma$, denoted by $\partial_\Gamma(x,y)$. The maximum value of the distance function in $\Gamma$ is called the \emph{diameter} of $\Gamma$. We define $\Gamma_{i}$ ($0\leq i\leq d$) to be the set of ordered pairs $(x,y)$ with $\partial_{\Gamma}(x,y)=i$, where $d$ is the diameter of $\Gamma$. Let $\wz{\partial}_{\Gamma}(x,y):=(\partial_{\Gamma}(x,y),\partial_{\Gamma}(y,x))$ be the \emph{two-way distance} from $x$ to $y$, and $\wz{\partial}(\Gamma):=\{\wz{\partial}_{\Gamma}(x,y)\mid x,y\in V(\Gamma)\}$ the \emph{two-way distance set} of $\Gamma$. For any $\wz{i}\in\wz{\partial}(\Gamma)$, we define $\Gamma_{\wz{i}}$ to be the set of ordered pairs $(x,y)$ with $\wz{\partial}_{\Gamma}(x,y)=\wz{i}$. An arc $(x,y)$ of $\Gamma$ is of type $(1,r)$ if $\partial_{\Gamma}(y,x)=r$.


In \cite{KSW03}, the third  author and Suzuki proposed a natural directed version of a distance-regular graph without bounded diameter, i.e., a weakly distance-regular digraph. A strongly connected digraph $\Gamma$ is said to be \emph{weakly distance-regular} if, for any $\tilde{h},\tilde{i},\tilde{j}\in\tilde{\partial}(\Gamma)$, the number of $z\in V(\Gamma)$ such that $\tilde{\partial}_{\Gamma}(x,z)=\tilde{i}$ and $\tilde{\partial}_{\Gamma}(z,y)=\tilde{j}$ is constant whenever $\tilde{\partial}_{\Gamma}(x,y)=\tilde{h}$. This constant is denoted by $p^{\tilde{h}}_{\tilde{i},\tilde{j}}(\Gamma)$. The integers $p^{\tilde{h}}_{\tilde{i},\tilde{j}}(\Gamma)$ are called the \emph{intersection numbers} of $\Gamma$. If no confusion occurs, we write $p^{\tilde{h}}_{\tilde{i},\tilde{j}}$ instead of $p^{\tilde{h}}_{\tilde{i},\tilde{j}}(\Gamma)$. We say that $\Gamma$ is \emph{commutative} if $p^{\tilde{h}}_{\tilde{i},\tilde{j}}=p^{\tilde{h}}_{\tilde{j},\tilde{i}}$ for all $\tilde{h},\tilde{i},\tilde{j}\in\tilde{\partial}(\Gamma)$.  An
important property of weakly distance-regular digraphs is that $\mathfrak{X}(\Gamma)=(V(\Gamma),\{\Gamma_{\wz{i}}\}_{\wz{i}\in\wz{\partial}(\Gamma)})$ is a non-symmetric association scheme. We call $\mathfrak{X}(\Gamma)$ the \emph{attached scheme} of $\Gamma$. We will introduce the definition of association schemes and distance-regular graphs in Section 2.  For more information about weakly distance-regular digraphs, see \cite{YF22,HS04,KSW04,YYF16,YYF18,YYF20,YYF22,ZYW22}.


For a digraph $\Gamma$, we form the {\em underlying graph} of $\Gamma$ with the same vertex set, and there is an edge between vertices $x$ and $y$ whenever $(x,y)\in A(\Gamma)$ or $(y,x)\in A(\Gamma)$. A digraph is {\em semicomplete} if its underlying graph is a complete graph. A semicomplete weakly distance-regular digraph has diameter $2$ and girth $g\leq3$ (See Proposition \ref{complete}).
In fact, 
if $\Gamma$ is a semicomplete weakly distance-regular digraph of girth $2$ (resp. $3$), then $\mathfrak{X}(\Gamma)$ is a non-symmetric association scheme with $3$ (resp. $2$) classes, and every non-symmetric association scheme with $3$ (resp. $2$) classes arises from a semicomplete weakly distance-regular digraph of girth $2$ (resp. $3$) in this way.

For the digraphs $\Gamma$ and $\Sigma$, the {\em Cartesian product} $\Gamma\,\square\,\Sigma$ is the digraph with the vertex set $V(\Gamma)\times V(\Sigma)$ such that $((u,v),(u',v'))$ is an arc if and only if $u=u'$ and $(v,v')\in A(\Sigma)$, or $(u,u')\in A(\Gamma)$ and $v=v'$.

In \cite{KSW03}, the third  author and Suzuki proposed a question when an orientation of a distance-regular graph defines a weakly distance-regular digraph.   In this paper, we initiate this project, classifying all commutative weakly distance-regular digraphs
whose underlying graphs are Hamming graphs, folded n-cubes, and
Doob graphs, respectively. In the forthcoming paper \cite{Z}, we shall determine commutative weakly distance-regular digraphs whose underlying graphs are Johnson graphs, folded Johnson graphs, Odd graphs,
and doubled Odd graphs.


Note that Hamming graphs can be viewed as Cartesian product of complete graphs.
Our first main theorem characterizes  the case that the underlying graphs are Hamming graphs.



\begin{thm}\label{main}
Let $\Gamma$ be a commutative weakly distance-regular digraph. Then $\Gamma$ has a Hamming graph as its underlying graph if and only if $\Gamma$ is isomorphic to one of the following digraphs:
\begin{itemize}
\item[{\rm (i)}] ${\rm Cay}(\mathbb{Z}_4,\{1\})$;

\item[{\rm (ii)}] ${\rm Cay}(\mathbb{Z}_4\times\mathbb{Z}_2,\{(1,0),(0,1)\})$;

\item[{\rm (iii)}] $\Delta^1$ or $\Delta^{1}\,\square\,\Delta^{2}$;

\item[{\rm (iv)}] $\Gamma^{1}\,\square\,\Gamma^{2}\,\square\cdots\square\,\Gamma^{d}$.
\end{itemize}
Here, $d\geq1$, and $(\Delta^{i})_{i\in\{1,2\}}$~(resp. $(\Gamma^i)_{i\in\{1,2,\ldots,d\}}$) are semicomplete weakly distance-regular digraphs of diameter $2$ and girth $2$ (resp. $3$) with $p^{\tilde{h}}_{\tilde{i},\tilde{j}}(\Delta^1)=p^{\tilde{h}}_{\tilde{i},\tilde{j}}(\Delta^2)$ (resp. $p^{\tilde{h}}_{\tilde{i},\tilde{j}}(\Gamma^1)=p^{\tilde{h}}_{\tilde{i},\tilde{j}}(\Gamma^l)$) for each $l$ and $\tilde{h},\tilde{i},\tilde{j}$.
\end{thm}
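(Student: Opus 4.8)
The plan is to prove the two directions separately, with essentially all of the work lying in the ``only if'' part. For the ``if'' direction I would verify directly that each listed digraph is a commutative weakly distance-regular digraph with a Hamming graph as underlying graph. Since an edge joins $(u,v)$ and $(u',v')$ in $\Gamma\,\square\,\Sigma$ exactly when $u=u'$ and $vv'$ is an edge of the underlying graph of $\Sigma$, or $v=v'$ and $uu'$ is an edge of the underlying graph of $\Gamma$, the underlying graph of a Cartesian product is the Cartesian product of the underlying graphs. As the underlying graph of each semicomplete factor $\Delta^i$ or $\Gamma^i$ is a complete graph, the products in (iii) and (iv) have underlying graph $K_q^{\,\square\, d}$, a Hamming graph, while ${\rm Cay}(\mathbb{Z}_4,\{1\})$ realizes $H(2,2)=C_4$ and the digraph in (ii) realizes $H(3,2)$. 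Because two-way distances add coordinate-wise in a Cartesian product, the intersection numbers of each product can be assembled from those of its factors, and the hypothesis that all factors share a common set of intersection numbers is precisely what forces the assembled numbers to depend only on the two-way distances, yielding commutativity and weak distance-regularity.

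For the ``only if'' direction, suppose $\Gamma$ is a commutative weakly distance-regular digraph whose underlying graph is $H(d,q)=K_q^{\,\square\, d}$. I would first exploit the product structure: every edge of $H(d,q)$ lies in a unique maximal clique (a line obtained by varying one coordinate), so the orientation of $\Gamma$ restricts to a semicomplete digraph on each line. The first key step is to show, using the weak distance-regularity of $\Gamma$, that the induced digraph on each line is itself a semicomplete weakly distance-regular digraph of diameter $2$; by Proposition \ref{complete} it then has girth $2$ or $3$. A regularity argument comparing the lines through a common vertex should then force all line-digraphs to share a single set of intersection numbers, which is the common-intersection-number condition appearing in (iii) and (iv).

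The heart of the argument is to determine how these line-digraphs fit together globally. I would prove that, outside of exceptional configurations, directed distances in $\Gamma$ also add coordinate-wise, so that $\Gamma$ equals the Cartesian product $\Gamma^1\,\square\cdots\square\,\Gamma^d$ of its factor orientations. The obstruction to this is ``twisting'', where the orientation along one coordinate depends on the value of another; the prototype is ${\rm Cay}(\mathbb{Z}_4,\{1\})$, which orients $H(2,2)=C_4$ as a directed $4$-cycle whose restriction to the two coordinate-$1$ lines points in opposite directions, so it is not a Cartesian product. The main obstacle will be to show that such twists can occur only in the hypercube case $q=2$ and only in the precise shapes (i) and (ii): this requires a careful local analysis of girth-$2$ configurations on the hypercube, tracking the two-way distances of vertices joined by twisted versus untwisted paths and using commutativity to rule out every larger twisted pattern (in particular any product of two or more directed $4$-cycles).

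Finally, for the genuinely product orientations I would split by girth. If all factors have girth $3$ (each a tournament on $K_q$ carrying a $2$-class scheme), the additive structure of two-way distances is unobstructed, so any number $d$ of factors yields a weakly distance-regular digraph, giving case (iv); the same computation shows that mixing a girth-$3$ factor with a girth-$2$ factor produces two-way distances attained with unequal multiplicities and is therefore excluded. If the factors have girth $2$, a direct evaluation of the intersection numbers $p^{\tilde h}_{\tilde i,\tilde j}$ of the product shows that three or more such factors again create two-way distances with inconsistent multiplicities, so at most two girth-$2$ factors are permitted, yielding case (iii). Combining the twisted exceptions with this product classification completes the ``only if'' direction.
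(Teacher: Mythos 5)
Your skeleton matches the paper's proof: sufficiency via coordinatewise addition of two-way distances (this is exactly Proposition \ref{sufficient}, with the girth-$3$ products delegated to \cite[Proposition 2.7]{KSW03} and the small cases to \cite{HS04}); necessity via semicomplete line digraphs, a Cartesian decomposition in the untwisted case, twisted exceptions only over $q=2$, and a multiplicity contradiction limiting girth-$2$ products to $d\leq2$ (the paper's final computation comparing $P_{(1,2),(2,1)}(o_{[d]},(b,c,o_{[d-2]}))$ with $P_{(1,2),(2,1)}(o_{[d]},(a,a',a'',o_{[d-3]}))$ is literally your ``inconsistent multiplicities'' argument). But two deviations matter. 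First, for $q=2$ the paper does not classify twisted patterns by hand: it uses distance-transitivity of $H(d,2)$ to show every arc has type $(1,1)$ or $(1,3)$ with $k_{1,1}\leq1$ (Proposition \ref{degree*}), proves $k_{1,3}=1$ (Lemma \ref{q=2-1}), and then, with valency at most $2$, imports (i)--(ii) from Suzuki's classification of thin weakly distance-regular digraphs \cite{HS04}. Your plan to ``rule out every larger twisted pattern'' directly would in effect re-prove part of \cite{HS04}; feasible, but heavier, and you omit the distance-transitivity input that makes the valency bound accessible.

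Second, and more seriously, you locate the hard local analysis at $q=2$ and present the $q>2$ case as if coordinatewise additivity of directed distances follows readily once twisting is excluded; in the paper this is where most of the work lies. The mechanism is the pairing supplied by $c_2=2$ (Lemmas \ref{jb4}--\ref{degree-3}), which yields the dichotomy of Proposition \ref{degree-main}: two vertices on a line either have equal in/out-degrees within the line, or the line contains an arc of type $(1,3)$ with $p^{(1,3)}_{(2,2),(3,1)}\neq0$ (Lemma \ref{p=4}). Excluding this second configuration when $q>2$ is the long Lemma \ref{degree}, and only afterwards can the orientation be transported consistently between parallel lines (Lemma \ref{product}, which also needs the Hamming-specific Lemmas \ref{match} and \ref{(2,2)}). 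Your proposal contains no idea for ruling out the unequal-degree configuration, and note that it is not a ``twist'' in your sense: the obstruction $p^{(1,3)}_{(2,2),(3,1)}\neq0$ lives inside a single clique, not between parallel lines, so a local analysis aimed only at $C_4$-type reversals would miss it. Relatedly, the common intersection numbers of the factors are not obtained by comparing lines through a common vertex: since $a_1=q-2$ traps all common neighbours of two line-vertices inside their line, each line digraph simply inherits its intersection numbers from $\Gamma$ (Lemma \ref{proof1}), which in particular makes mixed-girth products impossible without any multiplicity computation.
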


\begin{remark}
The underlying graphs of $\Gamma$ in Theorem \ref{main} are  $H(2,2)$ for (i), $H(3,2)$ for (ii), $H(1,q)$ or $H(2,q)$ with $q>2$ for (iii), and $H(d,q)$ with $q\equiv3~\mbox{mod}~4$ for (iv).
\end{remark}

Note that  folded $n$-cubes and Doob graphs are related to Hamming graphs. The following two main theorems determine  the cases that the underlying graphs are folded $n$-cubes and Doob graphs, respectively.



\begin{thm}\label{main3}
Let $\Gamma$ be a commutative weakly distance-regular digraph. Then $\Gamma$ dose not have a folded $n$-cube as its underlying graph if $n\geq3$.
\end{thm}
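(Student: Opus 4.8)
The plan is to exploit the fact that a folded $n$-cube is triangle-free precisely when $n\ge 4$, and to show that triangle-freeness, combined with the regularity of $\mathfrak{X}(\Gamma)$, forces every edge to be a digon, contradicting the non-symmetry of the attached scheme. First I would record the relevant structure of the folded $n$-cube $F_n$: it is a distance-regular graph of valency $n$ and girth $4$, and, writing it as the Cayley graph on $\mathbb{Z}_2^{n-1}$ with connection set $\{e_1,\dots,e_{n-1},e_1+\cdots+e_{n-1}\}$, one checks that each edge $\{x,y\}$ lies in exactly $n-1$ quadrangles (one for each neighbour $z\ne y$ of $x$, via the second common neighbour of $z$ and $y$), and that these quadrangles overlap in a rigid pattern. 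The case $n=3$ is degenerate, since there $F_3\cong K_4=H(1,4)$ and the corresponding digraphs are already described by Theorem \ref{main}; the substance lies in the triangle-free range $n\ge 4$, which I treat below.

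Next I would pin down the possible arc types. Assume $\Gamma$ is a commutative weakly distance-regular digraph with underlying graph $F_n$, $n\ge 4$; since $\mathfrak{X}(\Gamma)$ is non-symmetric, some edge carries a \emph{pure arc}, i.e.\ an arc $(x,y)$ of type $(1,r)$ with $r\ge 2$. Because $F_n$ is triangle-free, a directed path $y\to z\to x$ would close a triangle on $x,y,z$, which is impossible, so in fact $r\ge 3$; consequently $\Gamma$ has girth $2$ (if some digon exists) or girth $4$. A shortest return path $y\to u\to v\to x$ realizing $\partial_\Gamma(y,x)=3$ uses only edges of $F_n$, so together with $\{x,y\}$ it spans a quadrangle around which $x\to y\to u\to v\to x$ is a directed closed walk. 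Thus every minimal pure arc of type $(1,3)$ sits in such a directed quadrangle, and by the constancy of the intersection numbers this local pattern is the same at every $(1,3)$-arc.

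The heart of the argument is to show that these local orientations cannot be glued. I would fix an edge $\{x,y\}$ carrying a pure arc and examine the $n-1\ge 3$ quadrangles through it. Using commutativity and the intersection numbers $p^{(1,3)}_{\tilde{i},\tilde{j}}$ — in particular the counts of common out- and in-neighbours of the ordered pair $(x,y)$ — I would determine how many of these quadrangles can be directed and how the remaining edges incident to $x$ and $y$ must be typed. The goal is to prove that the digon relation $\Gamma_{(1,1)}$ and the pure-arc relations partition the edge set of $F_n$ into regular spanning subgraphs whose valencies are incompatible with the overlap pattern of the quadrangles through a fixed edge: propagating the forced orientation from one quadrangle to an adjacent one either violates the constancy of $p^{(1,3)}_{(1,3),(1,3)}$, or, for odd $n$ (where $F_n$ is non-bipartite), meets a pentagon that admits no consistent labelling by type-$(1,3)$ arcs. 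In either case one is driven to the all-digon orientation, i.e.\ $\mathfrak{X}(\Gamma)$ symmetric, which is the desired contradiction.

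The main obstacle is exactly this gluing step. Unlike the Hamming case, $F_n$ is not a Cartesian product of complete graphs when $n\ge 4$, so there is no product structure available to carry an orientation, and I expect the casework on the mutual orientation of two quadrangles sharing an edge — treated uniformly through the commutative intersection numbers, and split according to the parity of $n$ only where the bipartite/non-bipartite dichotomy forces it — to be the technical crux. The delicate point is to show that the two-way distance of \emph{every} edge is genuinely forced by the regularity, rather than merely bounded above by the lengths of the local $4$-cycles; controlling the directed distances along these overlapping quadrangles is where I would concentrate the effort.
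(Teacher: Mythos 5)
There is a genuine gap, and it sits exactly where you locate the ``technical crux'': the gluing contradiction is announced as a goal but never executed, and the steps you do sketch are not strong enough to set it up. Triangle-freeness of $\square_n$ only yields $\partial_\Gamma(y,x)\geq 3$ for a pure arc $(x,y)$; it does not force $\partial_\Gamma(y,x)=3$, so your directed quadrangles through each pure arc are not yet available, and nothing in the proposal rules out types $(1,r)$ with $r\geq4$ or pins down how digons and pure arcs coexist. In the paper this forcing is genuinely nontrivial: it is Proposition \ref{degree*} together with Lemma \ref{p=4} (every arc has type $(1,1)$ or $(1,3)$, $p^{(1,3)}_{(2,2),(3,1)}\neq0$, and $k_{1,1}\leq1$), and it rests on the whole $c_2=2$ counting machinery of Section 3 (Lemma \ref{s=t}, Lemmas \ref{jb4}--\ref{degree-3}, Proposition \ref{degree-main}) applied to the Hamming subgraphs $H(n-2,2)$ embedded in $\square_n$ --- not on girth alone. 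Even granted the $(1,1)/(1,3)$ dichotomy, the actual contradiction (Lemma \ref{q=2-2}, giving $n\leq6$) is a long explicit coordinate computation assuming $n\geq7$: one builds a $(1,3)$-circuit, shows $k_{1,3}>1$ produces pairs at two-way distance $(4,4)$, exploits $p^{(2,2)}_{(1,3),(1,3)}=1$ and the antipodal-digon propagation of Lemma \ref{k11}, and chases two-way distances $(3,3)$ versus $(3,4)$ to a counting contradiction. Your proposal gestures at ``violating the constancy of $p^{(1,3)}_{(1,3),(1,3)}$'' or ``a pentagon with no consistent labelling'' but supplies no argument that either actually occurs.

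The proposal also cannot be uniform in $n\geq4$, whereas it is phrased as if it were. For $n=4$ the folded cube is $K_{4,4}$ with $c_2=4$: each edge lies in nine quadrangles, not $n-1$, and every two-common-neighbour argument (including your map from neighbours $z$ of $x$ to quadrangles) fails. This is precisely why the paper outsources the small cases: $n=6$ is excluded via the valency-three classification \cite{YYF18} (since Proposition \ref{degree*} forces $k_{1,3}=3$ there), and $n\leq5$ via Hanaki's computer classification \cite{H}; no purely local quadrangle argument is known to cover them. Finally, dismissing $n=3$ because $\square_3\cong K_4=H(1,4)$ ``is already described by Theorem \ref{main}'' proves nothing toward the stated nonexistence --- Theorem \ref{main} exhibits digraphs on $K_4$, so this boundary case needs the careful accounting the paper performs in its final step rather than a deferral. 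In sum, you identify the right objects (arc types, commutativity, overlapping $4$-cycles through a fixed edge), but the decisive argument is missing and the degenerate small-$n$ cases are mishandled.
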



\begin{thm}\label{main2}
Let $\Gamma$ be a commutative weakly distance-regular digraph. Then $\Gamma$ has a Doob graph as its underlying graph if and only if $\Gamma$ is isomorphic to {\rm Cay}$(\mathbb{Z}_4\times\mathbb{Z}_4,\{(1,0),(0,1),(-1,-1)\})$.
\end{thm}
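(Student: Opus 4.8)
The plan is to prove both directions, treating the ``if'' direction as a finite verification and devoting the main effort to the ``only if'' direction. Throughout I would write $\mathrm{Sh}$ for the Shrikhane graph, the strongly regular graph with parameters $(16,6,2,2)$ whose local graph is the single $6$-cycle $C_6$ (as opposed to the rook's graph $H(2,4)$, whose local graph is $2K_3$), and recall that the Doob graph $D(n,m)$ is the Cartesian product of $n\geq1$ copies of $\mathrm{Sh}$ and $m\geq0$ copies of $K_4$; it is distance-regular with the same intersection array as the Hamming graph $H(2n+m,4)$. Its smallest member $D(1,0)=\mathrm{Sh}$ is precisely the underlying graph of ${\rm Cay}(\mathbb{Z}_4\times\mathbb{Z}_4,\{(1,0),(0,1),(-1,-1)\})$, since symmetrizing the connection set gives $\{(\pm1,0),(0,\pm1),\pm(1,1)\}$, a standard presentation of $\mathrm{Sh}$.

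For the ``if'' direction I would take $\Gamma={\rm Cay}(\mathbb{Z}_4\times\mathbb{Z}_4,\{(1,0),(0,1),(-1,-1)\})$ and verify directly that it is a commutative weakly distance-regular digraph. Because $\Gamma$ is a Cayley digraph over the abelian group $\mathbb{Z}_4^2$ it is vertex-transitive via translations, so it suffices to compute the two-way distance partition of the orbit of the identity, read off $\tilde\partial(\Gamma)$, and check that each $p^{\tilde h}_{\tilde i,\tilde j}$ is well defined and symmetric in $\tilde i,\tilde j$. One observes in passing that the connection set meets its own negation trivially (no digons) while $(1,0)+(0,1)+(-1,-1)=0$ produces a directed triangle, so $\Gamma$ has girth $3$ and every defining arc has type $(1,2)$; the remaining verification is a bounded computation on $16$ vertices.

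For the ``only if'' direction, suppose $\Gamma$ is a commutative weakly distance-regular digraph whose underlying graph is $D(n,m)$. The decisive first step is a local analysis inside a single Shrikhane factor. Because the local graph of $\mathrm{Sh}$ is one $6$-cycle rather than a disjoint union of cliques, the ``row/column'' splitting available in $H(2,4)$ is absent, and I would show that a mixed orientation (some edges bidirectional, some singly directed) of $\mathrm{Sh}$ contradicts constancy of the intersection numbers together with commutativity; hence $\Gamma$ has girth $3$ and every arc inside a Shrikhane factor has type $(1,2)$, arranged as directed triangles. The second step is to force $n=1$ and $m=0$ by the product-obstruction argument parallel to Theorem \ref{main}: a girth-$3$ semicomplete weakly distance-regular orientation of a $K_4$ factor would have to be a regular tournament on $4$ vertices, which cannot exist because the parameter congruence $q\equiv3\pmod4$ fails for $q=4$, so no $K_4$ factor can coexist with the forced girth-$3$ Shrikhane orientation while preserving regularity and commutativity; an analogous incompatibility between two distinct Shrikhane factors (or between a Shrikhane factor and a further Hamming-type direction) eliminates $n\geq2$.

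Once reduced to $\mathrm{Sh}$ itself, the final step is to show the girth-$3$ orientation yielding a commutative weakly distance-regular digraph is unique up to isomorphism: the forced directed-triangle pattern on the $C_6$ neighbourhoods propagates globally and endows the vertex set with a $\mathbb{Z}_4\times\mathbb{Z}_4$ structure, identifying $\Gamma$ with the stated Cayley digraph. The main obstacle I anticipate is the conjunction of the first two steps, namely proving that the Shrikhane local structure forces a single coherent girth-$3$ orientation and that no Cartesian product with additional factors survives; enumerating orientations of $\mathrm{Sh}$ is delicate owing to its large automorphism group, and the product-incompatibility must be argued through the intersection numbers rather than by naive degree counts. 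The cleanest route is probably to track the arcs of type $(1,2)$ and the length-$3$ circuits they generate, using commutativity to collapse the case analysis.
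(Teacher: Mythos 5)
Your outline has the same skeleton as the paper's Section~6 (force a unique orientation of each Shrikhande factor, eliminate the $K_4$ factors, eliminate a second Shrikhande factor, verify sufficiency), but both of your elimination steps are genuinely gapped. First, ruling out \emph{mixed} orientations of a Shrikhande factor does not give ``girth $3$ and every arc of type $(1,2)$'': the fully symmetric orientation, in which every arc of the factor is of type $(1,1)$, survives every local test, and the paper must carry it as a live alternative (Lemma~\ref{K4-2}(i), with $p^{(1,1)}_{(1,1),(1,1)}=2$) all the way through Lemma~\ref{d2=0}. It is excluded only at the end, from the global fact that $\Gamma$ is not an undirected graph, and even that requires first showing the directedness of $\Gamma$ cannot reside solely in a $K_4$ factor. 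The configuration ``undirected Shrikhande factors plus a ${\rm Cay}(\mathbb{Z}_4,\{1,2\})$ factor,'' which your sketch never considers, is killed in the paper by the clash between $p^{(1,1)}_{(1,2),(1,2)}\neq0$ and $p^{(1,1)}_{(1,1),(1,1)}=2$ when $a_1=2$.

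Second, your reason for excluding $K_4$ factors addresses the wrong case. A $K_4$ factor is not forced to have girth $3$: a degree argument in the spirit of Lemma~\ref{proof1} only makes it a semicomplete weakly distance-regular digraph on $4$ vertices, hence $K_4$ itself or ${\rm Cay}(\mathbb{Z}_4,\{1,2\})$, both of girth $2$ --- and girth-$2$ factors with $q=4$ genuinely occur in the Hamming classification (Theorem~\ref{main}(iii)), so no tournament-parity or $q\equiv3\pmod 4$ obstruction applies. The actual obstruction is an intersection-number incompatibility with the directed Shrikhande factor: the paper rules out ${\rm Cay}(\mathbb{Z}_4,\{1,2\})$ via the clashes above (and $p^{(2,1)}_{(1,1),(1,2)}\neq0$ versus $p^{(2,1)}_{(1,2),(1,2)}=2$), and then rules out an \emph{undirected} $K_4$ factor only through an explicit distance-$3$ computation: $(0_{[d]},(1,0_{[d_2-1]},2,0_{[2d_1-1]}))\in\Gamma_{3,3}$ with $P_{(1,2),(2,1)}$ empty for that pair but nonempty for $(0_{[d]},(0_{[d_2]},2,1,0_{[2d_1-2]}))$. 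Similarly, eliminating a second Shrikhande factor is not ``analogous by incompatibility'' in any soft sense; the paper needs a bespoke computation starting from $p^{(3,3)}_{(2,1),(2,2)}\neq0$ and tracking specific vertices to a contradiction with Lemma~\ref{0}. Your final step (propagating the directed-triangle pattern to recover the $\mathbb{Z}_4\times\mathbb{Z}_4$ Cayley structure) matches the paper's Lemma~\ref{K4-2}(ii), and your finite verification for sufficiency is a legitimate alternative to the paper's citation of the valency-$3$ classification, but as it stands the two central exclusions rest on assertions, one of them supported by an argument that targets a case that cannot arise.
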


%
%
%



The remainder of this paper is organized as follows. In Section 2, we provide the required notation, concepts and preliminary results for association schemes and distance-regular graphs. In Section 3, we give some results  concerning subdigraphs of a class of special weakly distance-regular digraphs. In Sections 4--6, we prove Theorems \ref{main}, \ref{main3} and \ref{main2} based on the results in Section 3, respectively.

\section{Preliminaries}

In this section, we present some notation, concepts, and basic results for association schemes and distance-regular graphs that we shall use in the remainder of the paper.

\subsection{Association schemes}

A \emph{$d$-class association scheme} $\mathfrak{X}$ is a pair $(X,\{R_{i}\}_{i=0}^{d})$, where $X$ is a finite set, and each $R_{i}$ is a
nonempty subset of $X\times X$ satisfying the following axioms (see \cite{EB84,PHZ96,PHZ05} for a background of the theory of association schemes):
\begin{itemize}
\item [{\rm(i)}] $R_{0}=\{(x,x)\mid x\in X\}$ is the diagonal relation;

\item [{\rm(ii)}] $X\times X=R_{0}\cup R_{1}\cup\cdots\cup R_{d}$, $R_{i}\cap R_{j}=\emptyset~(i\neq j)$;

\item [{\rm(iii)}] for each $i$, $R_{i}^{\rm T}=R_{i^{*}}$ for some $0\leq i^{*}\leq d$, where $R_{i}^{\rm T}=\{(y,x)\mid(x,y)\in R_{i}\}$;

\item [{\rm(iv)}] for all $i,j,l$, the cardinality of the set $$P_{i,j}(x,y):=R_{i}(x)\cap R_{j^{*}}(y)$$ is constant whenever $(x,y)\in R_{l}$, where $R(x)=\{y\mid (x,y)\in R\}$ for $R\subseteq X\times X$ and $x\in X$. This constant is denoted by $p_{i,j}^{l}$.
\end{itemize}
A $d$-class association scheme is also called an association scheme with $d$ classes (or even simply a scheme). The integers $p_{i,j}^{l}$ are called the \emph{intersection numbers} of $\mathfrak{X}$. We say that $\mathfrak{X}$ is \emph{commutative} if $p_{i,j}^{l}=p_{j,i}^{l}$ for all $0\leq i,j,l\leq d$. The subsets $R_{i}$ are called the \emph{relations} of $\mathfrak{X}$. For each $i$, the integer $k_{i}=p_{i,i^{*}}^{0}$ is called the \emph{valency} of $R_{i}$.   A relation $R_{i}$ is called \emph{symmetric} if $i=i^{*}$, and \emph{non-symmetric} otherwise. An association scheme is called \emph{symmetric} if all relations are symmetric, and \emph{non-symmetric} otherwise.


We close this subsection with two basic properties of intersection
numbers which are used frequently in this paper.

\begin{lemma}\label{intersection numners}
{\rm (\cite[Chapter \Rmnum{2}, Proposition 2.2]{EB84})} Let $(X,\{R_{i}\}_{i=0}^{d})$ be an association scheme. The following hold{\rm:}
\begin{itemize}
\item [{\rm(i)}] $k_{i}k_{j}=\sum_{l=0}^{d}p_{i,j}^{l}k_{l}$;

\item [{\rm(ii)}] $p_{i,j}^{l}k_{l}=p_{l,j^{*}}^{i}k_{i}=p_{i^{*},l}^{j}k_{j}$.
%
%
\end{itemize}
\end{lemma}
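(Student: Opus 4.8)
The plan is to derive both identities by elementary counting arguments directly from the axioms, the only preliminary fact needed being that the out-valency of $R_i$ equals $k_i$ and is independent of the base point. This is immediate from the definition: taking $l=0$ in axiom (iv) forces $y=x$, so
$k_i=p_{i,i^*}^{0}=|R_i(x)\cap R_{(i^*)^*}(x)|=|R_i(x)|$
for every $x\in X$, using $(i^*)^*=i$ (which follows from $R_i^{\rm T}=R_{i^*}$).

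For part (i) I would fix a vertex $x$ and double count the set of ordered pairs $(z,y)$ with $(x,z)\in R_i$ and $(z,y)\in R_j$. Choosing $z$ first and then $y$ gives $|R_i(x)|\cdot|R_j(z)|=k_ik_j$. Alternatively, sorting these pairs by the relation $R_l$ that contains $(x,y)$, there are $k_l$ choices for such a $y$, and for each one the number of admissible $z\in R_i(x)\cap R_{j^*}(y)$ is exactly $p_{i,j}^{l}$ by definition; this yields $\sum_{l}p_{i,j}^{l}k_l$. Equating the two counts gives (i). (Equivalently, one can phrase this via the adjacency matrices $A_i$: from $A_iA_j=\sum_l p_{i,j}^l A_l$ and $A_iJ=k_iJ$ one gets $k_ik_jJ=A_iA_jJ=\sum_l p_{i,j}^l k_l J$.)

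For part (ii) the natural device is to count, in three different ways, the cardinality of the set $T$ of ordered triples $(x,z,y)$ satisfying $(x,z)\in R_i$, $(z,y)\in R_j$ and $(x,y)\in R_l$ simultaneously. Grouping by the first coordinate, for each of the $|X|$ choices of $x$ there are $k_l$ vertices $y$ with $(x,y)\in R_l$ and then $p_{i,j}^{l}$ admissible $z$, so $|T|=|X|\,p_{i,j}^{l}k_l$. Grouping by $x$ again but choosing $z$ before $y$, one selects $z\in R_i(x)$ and then $y\in R_l(x)\cap R_j(z)$; reading this intersection as $p_{l,j^*}^{i}$ (via $(x,z)\in R_i$) gives $|T|=|X|\,p_{l,j^*}^{i}k_i$. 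Finally, grouping by the middle coordinate $z$, one selects $y\in R_j(z)$ and then $x\in R_{i^*}(z)\cap R_{l^*}(y)$, which equals $p_{i^*,l}^{j}$ (via $(z,y)\in R_j$), giving $|T|=|X|\,p_{i^*,l}^{j}k_j$. Cancelling the common factor $|X|$ from the three expressions yields (ii).

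I expect the only delicate point to be the index bookkeeping in (ii): each time an intersection such as $R_{i^*}(z)\cap R_{l^*}(y)$ is rewritten as a single intersection number one must match it against the template $p_{a,b}^{c}=|R_a(u)\cap R_{b^*}(v)|$ with $(u,v)\in R_c$, and in doing so pass correctly between a relation and its transpose $R_{m^*}=R_m^{\rm T}$. Keeping the starred indices straight is where a bookkeeping slip is easiest to make, but once the three counts are set up the identity follows by cancellation with no further computation.
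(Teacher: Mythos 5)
Your proof is correct, and the index bookkeeping in part (ii) (matching each intersection against the template $p_{a,b}^{c}=|R_a(u)\cap R_{b^*}(v)|$ with $(u,v)\in R_c$) works out exactly as you set it up. The paper itself gives no proof, only a citation to Bannai--Ito, and your double-counting of pairs for (i) and triple-counting of $(x,z,y)$ for (ii) is precisely the standard argument of the cited source, so there is nothing to contrast.
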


\subsection{Distance-regular graphs}

A connected graph $\Gamma$ is said to be \emph{distance-transitive} if, for any vertices $x$, $y,$ $x'$ and $y'$ of $\Gamma$ satisfying $\partial_{\Gamma}(x,y)=\partial_{\Gamma}(x',y')$, there exists an automorphism $\sigma$ of $\Gamma$ such that $x'=\sigma(x)$ and $y'=\sigma(y)$.
A connected graph $\Gamma$ of diameter $d$ is said to be \emph{distance-regular} if there are integers $b_{i-1}$, $c_i$ for $1\leq i\leq d$ such that there are exactly $c_i$ neighbours of $y$ in $\Gamma_{i-1}(x)$ and $b_i$ neighbours of $y$ in $\Gamma_{i+1}(x)$ for any $(x,y)\in\Gamma_{i}$. This definition implies the existence of numbers $a_i$ such that there are exactly $a_i$ neighbours of $y$ in $\Gamma_{i}(x)$ for $0\leq i\leq d$. They do not depend on $x$ and $y$ because $b_0$ is the valency of $\Gamma$ and $a_i$
is determined in terms of $b_i$ and $c_i$ by the equation $b_{0}=c_{i}+a_{i}+b_{i}$, where $c_0=b_d=0$. We observe $a_0=0$ and $c_1=1$. The array $\iota(\Gamma)=\{b_0,b_1,\ldots,b_{d-1}; c_1,c_2,\ldots,c_d\}$ is called the \emph{intersection array} of $\Gamma$. An important property of distance-regular graphs is that $(V(\Gamma),\{\Gamma_{i}\}_{i=0}^{d})$ is a symmetric association scheme. One can easily observe that $a_{i}=p_{1,i}^{i}$, $b_{i}=p_{1,i+1}^{i}$ and $c_{i}=p_{1,i-1}^{i}$ for $0\leq i\leq d$. For more detailed results on such special graphs, we refer the reader to \cite{NB74,AEB98,DKT16}.

The Hamming graph is a classical example of distance-regular graphs, and they naturally model Hamming distance. They feature in different areas of computer science and mathematics, including coding theory, algebraic graph theory and model theory. The definition of a  Hamming graph is as follows. For positive integers $d,q$ with $q>1$ and a set $S$ with $q$ elements, the {\em Hamming graph} $H(d,q)$ is the graph whose vertices correspond to the elements of $S^d$, where there is an edge between two vertices whenever they agree in all but one coordinate.
Hamming graphs also can be viewed as Cartesian products of complete graphs.
The following theorem determines the intersection array of a Hamming graph.

\begin{thm}\label{hamming}
{\rm (\cite[Theorem 9.2.1]{AEB98})} Let $d$ and $q$ be two positive integers with $q>1$. Then the Hamming graph $H(d,q)$ is  a distance-regular graph of diameter $d$, and has intersection array given by $b_i=(d-i)(q-1)$, $c_i=i$ and $a_i=i(q-2)$ for $0\leq i\leq d$.
\end{thm}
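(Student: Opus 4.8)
The plan is to first identify the graph metric on $H(d,q)$ with the Hamming metric and then carry out a direct neighbour count. Writing each vertex as a tuple in $S^{d}$, I would begin by showing that for vertices $x,y$ the distance $\partial_\Gamma(x,y)$ equals the number of coordinates in which $x$ and $y$ differ. The inequality $\partial_\Gamma(x,y)\le\#\{j:x_j\neq y_j\}$ is immediate, since one may correct the differing coordinates one at a time, each correction being a single edge; the reverse inequality holds because any single edge alters exactly one coordinate, so a walk of length $r$ changes at most $r$ coordinates. In particular the largest possible distance $d$ is realized by tuples differing in every coordinate, so the diameter equals $d$.

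Next I would fix an ordered pair $(x,y)$ with $\partial_\Gamma(x,y)=i$, so that $x$ and $y$ differ in exactly $i$ coordinates and agree in the remaining $d-i$, and then classify the neighbours $z$ of $y$. Every such $z$ is obtained from $y$ by altering a single coordinate $j$ to some value $z_j\neq y_j$. I would split into three cases according to the effect on the distance to $x$: (a) $j$ lies among the $i$ differing coordinates and $z_j=x_j$, which decreases the distance to $i-1$; (b) $j$ lies among the $i$ differing coordinates and $z_j\notin\{x_j,y_j\}$, which leaves the distance equal to $i$; (c) $j$ lies among the $d-i$ agreeing coordinates, which increases the distance to $i+1$. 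Counting the admissible values in each case gives $c_i=i$, $a_i=i(q-2)$, and $b_i=(d-i)(q-1)$, matching the claimed array.

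The crucial point is that these three counts depend only on $i$ and not on the particular pair $(x,y)\in\Gamma_i$, which is exactly what the definition of distance-regularity demands; hence the numbers $b_{i-1},c_i$ exist and take the stated values, and $a_i$ is recovered from $b_0=c_i+a_i+b_i=d(q-1)$ as a consistency check. Since the whole argument is an elementary case analysis, there is no serious obstacle here; the only place requiring care is the bookkeeping in case (b), where one must exclude both $y_j$ (so that $z$ genuinely differs from $y$ in coordinate $j$) and $x_j$ (so that the distance to $x$ is unchanged), leaving precisely $q-2$ admissible values for each of the $i$ differing coordinates.
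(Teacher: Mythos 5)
Your proof is correct and complete: identifying the graph metric with the Hamming metric and then classifying the neighbours of $y$ by which coordinate is altered (yielding $c_i=i$, $a_i=i(q-2)$, $b_i=(d-i)(q-1)$, all independent of the chosen pair in $\Gamma_i$) is exactly the standard argument behind this result. The paper itself offers no proof, simply citing \cite[Theorem 9.2.1]{AEB98}, and your computation is the same direct neighbour count found there, so there is nothing to add.
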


The Hamming graph $H(n,2)$ is also called a \emph{(hyper)cube} or the $n$-\emph{cube}. Its folded graph is called a {\em folded $n$-cube} $\square_n$. In other word, for integer $n\geq2$, the folded $n$-cube is the graph $H(n-1,2)$ with a perfect matching introduced between antipodal vertices, where two vertices in $\square_n$ are called the {\em antipodal vertices} if they differ in all coordinates.  
The following theorem determines the intersection array of a folded $n$-cube for $n\geq3$.

\begin{thm}\label{cube}
{\rm (\cite[Section 9.2]{AEB98})} Let $n\geq3$. Then the folded $n$-cube $\square_n$ is  a distance-regular graph of diameter $\lfloor n/2\rfloor$, and has intersection array given by $b_i=n-i$, $c_i=i$ and $a_i=0$ for $2i<n$, and if $n$ is even, then $c_d=n$.
\end{thm}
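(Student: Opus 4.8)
The plan is to realize $\square_n$ as the antipodal quotient of the $n$-cube and to transport the intersection numbers of $H(n,2)$ (which are available from Theorem~\ref{hamming}) through this quotient. I would identify $V(\square_n)$ with the set of classes $[x]=\{x,x+\mathbf{1}\}$ for $x\in\mathbb{Z}_2^{n}$, where $\mathbf{1}$ is the all-ones vector; the paper's description of $\square_n$ as $H(n-1,2)$ together with the matching of antipodal vertices is recovered by taking in each class the representative whose last coordinate is $0$. Writing $\mathrm{wt}$ for the Hamming weight and $\partial_{H}(x,y)=\mathrm{wt}(x+y)$ for the distance in $H(n,2)$, one checks that
\[
\partial_{\square_n}([x],[y])=\min\{\partial_H(x,y),\,n-\partial_H(x,y)\}
\]
is well defined, since the four pairs of representatives realise only the two complementary $H$-distances $\partial_H(x,y)$ and $n-\partial_H(x,y)$. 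Each class has exactly $n$ neighbours, namely $[x+e_k]$ for $1\le k\le n$, and as $\partial_H$ ranges over $\{0,1,\dots,n\}$ the folded distance ranges over $\{0,1,\dots,\lfloor n/2\rfloor\}$, giving diameter $\lfloor n/2\rfloor$.

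Next I would fix a pair with $\partial_{\square_n}([x],[y])=i$ and $0\le i\le d-1$. Then $\partial_H(x,y)\in\{i,n-i\}$ with the two values distinct, so I may choose representatives with $\partial_H(x,y)=i$; the $n$ coordinates split into the $i$ positions where $x,y$ differ and the $n-i$ positions where they agree. By Theorem~\ref{hamming} with $q=2$ (so that $a_i=0$ in the cube), flipping a differing coordinate moves to $H$-distance $i-1$ and flipping an agreeing coordinate moves to $H$-distance $i+1$. Applying the $\min$, the $i$ neighbours at $H$-distance $i-1$ have folded distance $i-1$, giving $c_i=i$; since $i\le d-1$ forces $i+1\le n-i-1$, the $n-i$ neighbours at $H$-distance $i+1$ keep folded distance $i+1$, giving $b_i=n-i$, and then $c_i+b_i=n$ leaves $a_i=0$. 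This split depends only on $i$ and not on the chosen pair, so the counts are constant and $\square_n$ is distance-regular with the stated $b_i,c_i,a_i$ throughout $0\le i\le d-1$.

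The main obstacle is the behaviour at the extremal index $i=d$, where the two branches of the $\min$ collapse. When $n$ is even and $i=d=n/2$ the representative is no longer unique ($\partial_H=n/2=n-n/2$): the $n/2$ differing flips reach $H$-distance $n/2-1$ and the $n/2$ agreeing flips reach $H$-distance $n/2+1$, but both fold to $n/2-1$, so all $n$ neighbours descend one step and the two sub-counts must be \emph{combined} to give $c_d=n$ (hence $a_d=b_d=0$), which is exactly the exceptional clause. When $n$ is odd and $i=d=(n-1)/2$, the $n-i$ agreeing flips land at $H$-distance $(n+1)/2$, whose fold is again $(n-1)/2=d$, so these neighbours stay at folded distance $d$; thus $c_d=d$ and $b_d=0$, and the genuinely nonzero value $a_d=n-d$ appears. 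This shows that the clause ``$a_i=0$ for $2i<n$'' should be read as excluding $i=d$ in the odd case. Pinning down precisely where the collapse occurs, and confirming that all counts are pair-independent (which also follows from the distance-transitivity of $\square_n$ inherited from the cube), is the delicate part of the argument.
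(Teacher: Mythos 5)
Your argument is correct, but note first that the paper does not prove this statement at all: Theorem \ref{cube} is quoted from \cite[Section 9.2]{AEB98} as a known fact, so there is no internal proof to compare against, and your antipodal-quotient computation is precisely the standard argument behind the citation. Your identification of $V(\square_n)$ with the classes $[x]=\{x,x+\mathbf{1}\}$, the distance formula $\partial_{\square_n}([x],[y])=\min\{\partial_H(x,y),\,n-\partial_H(x,y)\}$, and the split between $0\le i\le d-1$ and the extremal index $i=d$ are all sound; in particular you correctly recover the exceptional value $c_d=n$ for even $n$ and the nonzero value $a_d=n-d$ for odd $n$. Your reading of the clause ``$a_i=0$ for $2i<n$'' as excluding $i=d$ in the odd case is the right one: taken literally it fails already for $\square_3=K_4$ (where $a_1=2$) and for the folded $5$-cube, the Clebsch graph (where $a_2=3$), whereas the $b_i$ clause is harmless because the intersection array $\{b_0,\ldots,b_{d-1};c_1,\ldots,c_d\}$ never contains $b_d$. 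This imprecision is immaterial to the paper, which invokes Theorem \ref{cube} only for $a_1=0$ and $c_2=2$ with $n\geq5$ (Lemmas \ref{k11} and \ref{q=2-2}), indices safely below the boundary. Two small tightenings would make your write-up complete: the displayed formula for $\partial_{\square_n}$ is not merely ``well defined''---its equality with the graph distance requires the one-line lifting argument that a walk of length $\ell$ in $\square_n$ from $[x]$ to $[y]$ lifts to a walk of length $\ell$ in $H(n,2)$ from $x$ to $y$ or to $y+\mathbf{1}$, while cube geodesics project back down; and the hypothesis $n\geq3$ is exactly what makes the neighbours $[x+e_1],\ldots,[x+e_n]$ pairwise distinct, since $[x+e_j]=[x+e_k]$ with $j\neq k$ forces $e_j+e_k=\mathbf{1}$, i.e.\ $n=2$. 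The closing appeal to distance-transitivity is unnecessary, as your coordinate counts are already visibly independent of the chosen pair.
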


The Hamming graph $H(d,q)$ is characterized by its intersection array unless $q=4$ and $d>1$, in which case there are also the so-called Doob graphs. For $d_1>0$ and $d_2\geq0$, the {\em Doob graph} $G(d_1,d_2)$ is the Cartesian product of $H(d_2,4)$ with $d_1$ copies of the  Shrikhande graph, where the  \emph{Shrikhande graph} is isomorphic to ${\rm Cay}(\mathbb{Z}_4\times\mathbb{Z}_4,\{(\pm1,0),(0,\pm1),\pm(1,1)\})$. Note that $G(d_1,d_2)$ has the same intersection numbers as $H(2d_1+d_2,4)$.

%

\begin{thm}\label{Doob}
{\rm (\cite[Section 9.2]{AEB98})} Let $d_1>0$ and $d_2\geq0$. Then the Doob graph $G(d_1,d_2)$ is  a distance-regular graph of diameter $2d_1+d_2$, and has intersection array given by $b_i=3(2d_1+d_2-i)$, $c_i=i$ and $a_i=2i$ for $0\leq i\leq 2d_1+d_2$.
\end{thm}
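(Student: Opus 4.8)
The plan is to assemble the Doob graph from its factors and push distance-regularity through the Cartesian product, exploiting that all the relevant intersection parameters are affine-linear in the distance with matching slopes across the factors. First I would pin down the local structure of the Shrikhande graph $S:={\rm Cay}(\mathbb{Z}_4\times\mathbb{Z}_4,\{(\pm1,0),(0,\pm1),\pm(1,1)\})$. It is vertex-transitive of valency $6$ on $16$ vertices, so a direct count of common neighbours of the identity against each connection element and against each distinct non-connection element shows that $S$ is strongly regular with parameters $(16,6,2,2)$. Hence $S$ is distance-regular of diameter $2$ with $c_0=0,c_1=1,c_2=2$, $a_0=0,a_1=2,a_2=4$ and $b_0=6,b_1=3,b_2=0$; that is, $c_i=i$, $a_i=2i$ and $b_i=3(2-i)$, which is exactly the array of $H(2,4)$ by Theorem \ref{hamming}.

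Next I would record the two facts about a Cartesian product $\Gamma=\Gamma^1\,\square\cdots\square\,\Gamma^m$ of connected graphs. The distance decomposes additively, $\partial_\Gamma(x,y)=\sum_{f=1}^m\partial_{\Gamma^f}(x_f,y_f)$, since a geodesic may be taken to alter one coordinate at a time. Moreover a neighbour $z$ of $y$ in $\Gamma$ differs from $y$ in exactly one coordinate $f$, with $z_f$ adjacent to $y_f$ and $z_g=y_g$ for $g\neq f$; writing $i_f=\partial_{\Gamma^f}(x_f,y_f)$ and $i=\sum_f i_f=\partial_\Gamma(x,y)$, such a $z$ has $\partial_\Gamma(x,z)=i-i_f+\partial_{\Gamma^f}(x_f,z_f)\in\{i-1,i,i+1\}$. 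Grouping the neighbours of $y$ by the coordinate they move in and the resulting change in distance gives $c_i=\sum_f c^{(f)}_{i_f}$, $a_i=\sum_f a^{(f)}_{i_f}$ and $b_i=\sum_f b^{(f)}_{i_f}$, where $c^{(f)},a^{(f)},b^{(f)}$ are the parameters of $\Gamma^f$; a priori these sums are functions of the whole tuple $(i_1,\ldots,i_m)$.

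The crux is that these sums depend only on $i$. Taking the $m=d_1+1$ factors to be $d_1$ copies of $S$ together with $H(d_2,4)$, every factor has $c$-parameter of slope $1$, $a$-parameter of slope $2$, and $b$-parameter of slope $-3$ in its own distance (this is precisely where one uses that $S$ shares the array of $H(2,4)$, which has $q-1=3$). Hence $c_i=\sum_f i_f=i$, $a_i=\sum_f 2i_f=2i$, and $b_i=\sum_f b^{(f)}_0-3\sum_f i_f=(6d_1+3d_2)-3i=3(2d_1+d_2-i)$, each independent of how $i$ splits. This simultaneously shows that $\Gamma=G(d_1,d_2)$ is distance-regular and yields the asserted array; the diameter is the largest attainable value of $i$, namely $2d_1+d_2$, since each Shrikhande factor contributes up to $2$ and the Hamming factor up to $d_2$, and one checks that $\Gamma_i\neq\emptyset$ for every $0\leq i\leq 2d_1+d_2$.

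The step I expect to carry the real content, rather than bookkeeping, is isolating the affine-linearity condition: the Cartesian product of distance-regular graphs is in general \emph{not} distance-regular, and the argument succeeds only because all factors have matching slopes in their $c$-, $a$- and $b$-parameters, with the intercepts (the factor valencies) allowed to differ. The remaining subtlety is purely computational, namely verifying the strong-regularity parameters of $S$; I would organise that count using the group structure of $\mathbb{Z}_4\times\mathbb{Z}_4$ to keep it short, after which the multi-factor conclusion follows directly from the additive distance formula.
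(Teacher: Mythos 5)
Your proof is correct. The paper does not actually prove this statement---it is quoted from \cite[Section 9.2]{AEB98} as a known fact---and your argument is a faithful reconstruction of the standard one: a direct count in $\mathbb{Z}_4\times\mathbb{Z}_4$ shows the Shrikhande graph is strongly regular with parameters $(16,6,2,2)$ and hence has the array of $H(2,4)$, and since every factor of the Cartesian product has $c_j=j$, $a_j=2j$, $b_j=b^{(f)}_0-3j$ (affine in $j$ with common slopes), the coordinatewise neighbour counts $c_i=\sum_f c^{(f)}_{i_f}$, $a_i=\sum_f a^{(f)}_{i_f}$, $b_i=\sum_f b^{(f)}_{i_f}$ collapse to functions of $i=\sum_f i_f$ alone, which is exactly the point where distance-regularity of the product can fail in general and succeeds here.
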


\section{General results}

In the remainder of this paper, we always assume that $\Gamma$ is a commutative weakly distance-regular digraph with  vertex set $S^{d}$, where $S$ is a set of $q$ elements, and that the underlying graph of $\Gamma$ is a distance-regular graph $\Sigma$.

To distinguish the notation between $\Gamma$ and $\Sigma$, let $p_{\wz{i},\wz{j}}^{\wz{h}}, P_{\wz{i},\wz{j}}(x,y)$ denote the symbols belonging to $\Gamma$ with $\wz{i},\wz{j},\wz{h}\in\wz{\partial}(\Gamma)$ and $x,y\in V(\Gamma)$, and $a_i,c_i$ the symbols belonging to $\Sigma$ with $0\leq i\leq d'$, where $d'$ is the diameter of $\Sigma$. For $(a,b)\in\wz{\partial}(\Gamma)$, we write $\Gamma_{a,b}$ instead of $\Gamma_{(a,b)}$.

First, we give some general results about weakly distance-regular digraphs whose underlying graphs are distance-regular graphs.

\begin{lemma}\label{s=t}
Suppose that $c_2=2$ and $p_{(1,p-1),(1,s-1)}^{(2,2)}\neq0$.  Then $p=s$. Moreover, exactly one of the following holds:
\begin{itemize}
\item[{\rm(i)}] $p=2$ and $p_{(1,1),(1,1)}^{(2,2)}=2$;

\item[{\rm(ii)}] $p\in\{3,4\}$ and $p_{(1,p-1),(1,p-1)}^{(2,2)}=1$.
\end{itemize}
\end{lemma}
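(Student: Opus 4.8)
The plan is to first reduce the statement to a purely local question about the two common neighbours of $x$ and $y$ in $\Sigma$, then to extract $p=s$ from commutativity together with $c_2=2$, and finally to read off the two cases by a short count of directed $2$-paths. Fix a pair $(x,y)$ with $\tilde{\partial}_{\Gamma}(x,y)=(2,2)$ witnessing $p^{(2,2)}_{(1,p-1),(1,s-1)}\neq0$. Since $\partial_{\Gamma}(x,y)=2$ and neither $(x,y)$ nor $(y,x)$ is an arc, $x$ and $y$ are non-adjacent in $\Sigma$, so $\partial_{\Sigma}(x,y)=2$ and they have exactly $c_2=2$ common neighbours in $\Sigma$, say $z$ and $z'$. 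Any $w$ with $\tilde{\partial}_{\Gamma}(x,w)=(1,a)$ and $\tilde{\partial}_{\Gamma}(w,y)=(1,b)$ satisfies $(x,w),(w,y)\in A(\Gamma)$, hence $w$ is $\Sigma$-adjacent to both $x$ and $y$ and so $w\in\{z,z'\}$; thus every relevant intersection number is at most $2$. Taking $w=z$ and using $\partial_{\Gamma}(z,x)=p-1\ge1$, $\partial_{\Gamma}(y,z)=s-1\ge1$ gives $p,s\ge2$.

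Next I would prove $p=s$. By commutativity $p^{(2,2)}_{(1,s-1),(1,p-1)}=p^{(2,2)}_{(1,p-1),(1,s-1)}\neq0$, so there is $w'$ with $\tilde{\partial}_{\Gamma}(x,w')=(1,s-1)$ and $\tilde{\partial}_{\Gamma}(w',y)=(1,p-1)$, and again $w'\in\{z,z'\}$. Suppose $p\neq s$; then $w'\neq z$, so $\{z,w'\}$ are precisely the two common neighbours. The key point is that $\partial_{\Gamma}(y,x)=2$ forces a directed path $y\to u\to x$ with $u$ a common neighbour, i.e. $u\in\{z,w'\}$ with $(u,x),(y,u)\in A(\Gamma)$, hence $\partial_{\Gamma}(u,x)=\partial_{\Gamma}(y,u)=1$. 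For $u=z$ this reads $p-1=1$ and $s-1=1$; for $u=w'$ it reads $s-1=1$ and $p-1=1$. Either way $p=s=2$, contradicting $p\neq s$. Hence $p=s$.

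Finally I would separate the two cases. Combining the forward path $x\to z\to y$ with the backward path $y\to u\to x$ gives a directed circuit of length $4$, so $\partial_{\Gamma}(z,x)\le3$ and therefore $p=s\in\{2,3,4\}$. If $p\ge3$, then $\partial_{\Gamma}(z,x)=p-1\ge2$, so $(z,x)\notin A(\Gamma)$ and $z$ cannot carry a backward path; since some common neighbour must carry $y\to u\to x$ (with $(u,x)\in A(\Gamma)$, i.e. backward distance to $x$ equal to $1$), at most one of $z,z'$ can be of type $(1,p-1)$ in both directions, and $z$ is one of them, giving $p^{(2,2)}_{(1,p-1),(1,p-1)}=1$: this is case (ii). If $p=2$, then $z$ is joined to $x$ and to $y$ by digons and so carries both a forward and a backward $2$-path, and it remains to show that the other common neighbour $z'$ is likewise a double digon, which yields the count $2$ of case (i).

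The main obstacle is this last assertion. By weak distance-regularity the quantities $N:=p^{(2,2)}_{(1,1),(1,1)}$ and $M_p:=p^{(2,2)}_{(1,p-1),(1,p-1)}$ ($p\ge3$) are constants, and equating the numbers of forward and of backward directed $2$-paths through $(x,y)$ (legitimate since $\tilde{\partial}_{\Gamma}(y,x)=(2,2)$ as well), together with $c_2=2$, yields $N+2\sum_{p\ge3}M_p\le2$; hence $N\in\{0,1,2\}$, and the task reduces to excluding $N=1$. When $N=1$ the above bound forces the second common neighbour $z'$ to be a source or a sink relative to $\{x,y\}$, and ruling this out is where I expect the real difficulty to lie: it should follow from the regularity of the (symmetric) digon relation $\Gamma_{(1,1)}$ applied to the pair $(z,z')$, combined with Lemma \ref{intersection numners}(ii) and the structural results on subdigraphs developed earlier in this section.
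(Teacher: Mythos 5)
Your reduction to the two common $\Sigma$-neighbours, your derivation of $p=s$ (via the backward $2$-path forced by $\partial_{\Gamma}(y,x)=2$ --- a slightly different but valid route; the paper instead applies the hypothesis to the reversed pair and lets commutativity manufacture a third common neighbour), the bound $p\leq4$, and your count of $1$ in case (ii) are all sound, and your inequality $N+2\sum_{p\geq3}M_p\leq2$ is correct and even gives the mutual exclusivity of (i) and (ii). But the proof is genuinely incomplete exactly where you flag it: in case (i) you must exclude $N=1$, i.e.\ the configuration where the second common neighbour $z'$ is a strict source or strict sink relative to $\{x,y\}$, and you never do. Worse, the repair you sketch is partly circular: there are no ``structural results on subdigraphs developed earlier in this section'' to invoke --- Lemma \ref{s=t} is the \emph{first} lemma of Section 3, and the subdigraph lemmas (Lemmas \ref{proof1} through \ref{degree-3}) are proved afterwards and themselves rely on Lemma \ref{s=t}, so any appeal to them here would beg the question. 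The regularity of the digon relation $\Gamma_{1,1}$ likewise gives no obvious handle on the local picture at $(z,z')$.

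The missing step is in fact a one-line repetition of the commutativity trick you already used for $p=s$; no machinery is needed. Suppose $z'$ is a sink: $\tilde{\partial}_{\Gamma}(x,z')=(1,l-1)$ and $\tilde{\partial}_{\Gamma}(z',y)=(h-1,1)$, where $l,h\geq3$ because in your $N=1$ analysis both edges at $z'$ are single arcs (a digon on one side combined with a forward arc would produce a mixed-type directed $2$-path, which your first statement $p=s$ already forbids). Then $z'\in P_{(1,l-1),(h-1,1)}(x,y)$, so $p^{(2,2)}_{(1,l-1),(h-1,1)}\neq0$, and commutativity gives $p^{(2,2)}_{(h-1,1),(1,l-1)}\neq0$; any witness $y''\in P_{(h-1,1),(1,l-1)}(x,y)$ is $\Sigma$-adjacent to both $x$ and $y$, yet $\tilde{\partial}_{\Gamma}(x,y'')=(h-1,1)$ differs from $\tilde{\partial}_{\Gamma}(x,z)=(1,1)$ and from $\tilde{\partial}_{\Gamma}(x,z')=(1,l-1)$ since $h,l\geq3$, so $y''\notin\{z,z'\}$ is a third common neighbour, contradicting $c_2=2$; the source case is symmetric. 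This is precisely how the paper closes case (i): it classifies the possible two-way distance types of the second common neighbour and kills every non-digon type by commutativity plus $c_2=2$, concluding $P_{(1,1),(1,1)}(x,y)=\{z,z'\}$ and hence $p^{(2,2)}_{(1,1),(1,1)}=2$. With that paragraph inserted, your argument becomes a complete proof essentially parallel to the paper's.
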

\begin{proof}
Let $(x,z)\in\Gamma_{2,2}$.
Then $(x,z)\in\Sigma_2$. The fact $c_2=2$ implies that there exist vertices $y$ and $y'$ such that $\Sigma_1(x)\cap\Sigma_1(z)=\{y,y'\}$. Since $p_{(1,p-1),(1,s-1)}^{(2,2)}\neq0$, we may assume $y\in P_{(1,p-1),(1,s-1)}(x,z)$.

Suppose $p\neq s$. It follows that $y'\in P_{(1,p-1),(1,s-1)}(z,x)$. By the commutativity of $\Gamma$, there exists a vertex $y''\in P_{(1,s-1),(1,p-1)}(x,z)$ with $y''\notin\{y,y'\}$, which implies $y''\in\Sigma_1(x)\cap\Sigma_1(z)$, contrary to the fact that $c_2=2$. The first statement is valid.

Since $p_{(1,p-1),(1,p-1)}^{(2,2)}\neq0$, one has $p\in\{2,3,4\}$.  If $p\in\{3,4\}$, then $y'\in P_{(1,p-1),(1,p-1)}(z,x)$, and so $P_{(1,p-1),(1,p-1)}(x,z)=\{y\}$. Thus, (ii) is valid.

Now we consider the case $p=2$. If $y'\in P_{(1,l-1),(h-1,1)}(x,z)$ for some $l>2$ or $h>2$, from the commutativity of $\Gamma$, then there exists   $y''\in P_{(h-1,1),(1,l-1)}(x,z)$ with $y''\notin\{y,y'\}$, which implies $y,y',y''\in\Sigma_1(x)\cap\Sigma_1(z)$,  contrary to the fact that $c_2=2$. Hence, $y'\notin P_{(1,l-1),(h-1,1)}(x,z)\cup P_{(h-1,1),(1,l-1)}(x,z)$ for  $l>2$ or $h>2$. By the first statement, one has $y'\in P_{(1,l-1),(1,l-1)}(x,z)\cup P_{(1,l-1),(1,l-1)}(z,x)$. If $l>2$, then there exists $y''\in P_{(1,l-1),(1,l-1)}(z,x)\cup P_{(1,l-1),(1,l-1)}(x,z)$ with $y''\notin\{y,y'\}$, which implies $y,y',y''\in\Sigma_1(x)\cap\Sigma_1(z)$, a contradiction. Hence, $l=2$. Since $\{y,y'\}=P_{(1,1),(1,1)}(x,z),$ (i) is valid.
%
\end{proof}

\begin{lemma}\label{pneqt}
Suppose that  $c_2=2$ and $p_{(2,2),(p-1,1)}^{(1,p-1)}\neq0$. Then $p_{(2,2),(r-1,1)}^{(1,t-1)}=0$ for all $t\neq p$.
\end{lemma}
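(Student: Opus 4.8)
The plan is to transport the intersection number to one with upper index $(2,2)$ and then reduce everything to a short counting argument about the two common neighbours supplied by $c_2=2$. By Lemma~\ref{intersection numners}(ii),
\[
p_{(2,2),(r-1,1)}^{(1,t-1)}\,k_{(1,t-1)}=p_{(1,t-1),(1,r-1)}^{(2,2)}\,k_{(2,2)},
\]
and since all valencies are positive this gives $p_{(2,2),(r-1,1)}^{(1,t-1)}=0$ if and only if $p_{(1,t-1),(1,r-1)}^{(2,2)}=0$. For $r\neq t$ the latter vanishes automatically: Lemma~\ref{s=t} forces $t=r$ whenever $p_{(1,t-1),(1,r-1)}^{(2,2)}\neq0$. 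So it remains to handle $r=t$, that is, to prove $p_{(1,t-1),(1,t-1)}^{(2,2)}=0$ for every $t\neq p$. Applying the same identity to the hypothesis shows $p_{(1,p-1),(1,p-1)}^{(2,2)}\neq0$, so Lemma~\ref{s=t} applies to $p$, pinning down $p\in\{2,3,4\}$ together with the structure of the two common neighbours.

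Next I would fix a pair $(x,z)\in\Gamma_{2,2}$ (which exists since $p_{(1,p-1),(1,p-1)}^{(2,2)}\neq0$). As in the proof of Lemma~\ref{s=t} one has $\partial_{\Sigma}(x,z)=2$, and $c_2=2$ yields exactly two common neighbours, $\Sigma_1(x)\cap\Sigma_1(z)=\{y,y'\}$. The key observation is that any $w\in P_{(1,t-1),(1,t-1)}(x,z)$ satisfies $\partial_{\Gamma}(x,w)=\partial_{\Gamma}(w,z)=1$, so $w$ is adjacent to both $x$ and $z$ in $\Sigma$, whence $w\in\{y,y'\}$; moreover, for a fixed $w$ the type of the arc $x\to w$ is determined, so the sets $P_{(1,t-1),(1,t-1)}(x,z)$ are pairwise disjoint as $t$ varies. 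Thus the whole question becomes which of $y,y'$ can serve as the middle vertex of a forward $2$-path $x\to w\to z$ of a prescribed type.

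I would then conclude by the dichotomy of Lemma~\ref{s=t}. If $p=2$, then $|P_{(1,1),(1,1)}(x,z)|=p_{(1,1),(1,1)}^{(2,2)}=2$, so $P_{(1,1),(1,1)}(x,z)=\{y,y'\}$ exhausts both common neighbours and $P_{(1,t-1),(1,t-1)}(x,z)=\emptyset$ for $t\neq2$. If $p\in\{3,4\}$, then Lemma~\ref{s=t} gives $P_{(1,p-1),(1,p-1)}(x,z)=\{y\}$ and $y'\in P_{(1,p-1),(1,p-1)}(z,x)$; the membership $y'\in\Gamma_{(p-1,1)}(x)$ forces $\partial_{\Gamma}(x,y')=p-1\geq2$, so there is no arc $x\to y'$ and $y'$ lies in no set $P_{(1,t-1),(1,t-1)}(x,z)$, while $y$ lies in it only for $t=p$ (as $\partial_{\Gamma}(y,x)=p-1$). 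In either case $P_{(1,t-1),(1,t-1)}(x,z)=\emptyset$ for $t\neq p$, hence $p_{(1,t-1),(1,t-1)}^{(2,2)}=0$, and with the first paragraph the claim follows.

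The main obstacle is the case $p\in\{3,4\}$: one must exclude the possibility that the ``spare'' common neighbour $y'$ supports a forward $2$-path of a type $t\neq p$. The decisive input is the structural conclusion, already recorded inside the proof of Lemma~\ref{s=t}, that $y'$ instead lies on the \emph{reverse} directed $2$-path $z\to y'\to x$; this makes $\partial_{\Gamma}(x,y')=p-1\geq2$ and thereby rules out the arc $x\to y'$. Extracting and exploiting this fact is the crux, since the rest of the argument is just the disjointness bookkeeping of the previous paragraph combined with the valency identity of Lemma~\ref{intersection numners}(ii).
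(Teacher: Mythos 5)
Your proof is correct and takes essentially the same route as the paper: both transport the hypothesis through Lemma \ref{intersection numners}(ii) to $p^{(2,2)}_{(1,p-1),(1,p-1)}\neq0$ and then exhaust the two common neighbours supplied by $c_2=2$ using the dichotomy of Lemma \ref{s=t}. The only cosmetic difference is that where you extract the fact $y'\in P_{(1,p-1),(1,p-1)}(z,x)$ from inside the proof of Lemma \ref{s=t} (or, equivalently, re-derive it by applying the nonzero intersection number to the pair $(z,x)$), the paper obtains the same fact by observing $p^{(2,2)}_{(p-1,1),(p-1,1)}\neq0$ via the converse identity.
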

\begin{proof}
According to Lemma \ref{intersection numners} (ii), one has $p_{(1,p-1),(1,p-1)}^{(2,2)}\neq0$. Note that $c_2=2$. If $p=2$, by Lemma \ref{s=t} (i), then $p_{(1,1),(1,1)}^{(2,2)}=2$, which implies $p_{(1,t-1),(1,r-1)}^{(2,2)}=0$ for all $t\neq2$; if $p>2$,
by Lemma \ref{intersection numners} (ii), then $p_{(p-1,1),(p-1,1)}^{(2,2)}\neq0$, which implies $p_{(1,t-1),(1,r-1)}^{(2,2)}=0$ for all $t\neq p$. From Lemma \ref{intersection numners} (ii), one gets $p_{(2,2),(r-1,1)}^{(1,t-1)}=0$ for all $t\neq p$.
\end{proof}

\begin{lemma}\label{0}
Suppose that $a_1=2$.  If $s>2$, then $p^{(1,s-1)}_{(1,p-1),(p-1,1)}=0$.
\end{lemma}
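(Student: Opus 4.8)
The plan is to argue by contradiction in the style of the proof of Lemma \ref{s=t}: assume $p_{(1,p-1),(p-1,1)}^{(1,s-1)}\neq0$, pick a concrete witness, and use the bound $a_1=2$ (every edge of $\Sigma$ lies on exactly two triangles) together with the commutativity of $\Gamma$ to force a third triangle on some edge of $\Sigma$, which is the desired contradiction. Concretely, fix $(x,y)\in\Gamma_{1,s-1}$ and $z\in P_{(1,p-1),(p-1,1)}(x,y)$. Unwinding the two-way distances, $z\in\Gamma_{(1,p-1)}(x)\cap\Gamma_{(1,p-1)}(y)$, so there are three arcs: $x\to y$ of type $(1,s-1)$ and $x\to z$, $y\to z$ of type $(1,p-1)$. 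In $\Sigma$ these form a triangle on $\{x,y,z\}$, so $z$ is a common neighbour of the adjacent pair $x,y$. Since $a_1=2$, the pair $x,y$ has exactly two common neighbours in $\Sigma$; call the other one $w$.

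The engine is the commutativity identity $p_{(1,p-1),(p-1,1)}^{(1,s-1)}=p_{(p-1,1),(1,p-1)}^{(1,s-1)}$, which produces a vertex realizing the reversed type, namely a common in-neighbour $z'$ of $x,y$ with $z'\to x$ and $z'\to y$ both of type $(1,p-1)$. A coincidence $z'=z$ would force $\partial_\Gamma(z,x)=1$, contradicting $\partial_\Gamma(z,x)=p-1$ as soon as $p>2$; hence for $p>2$ we have $z'\neq z$, so $z'=w$, and the two triangles on $\{x,y\}$ are the ``sink'' triangle $x\to y\to z$, $x\to z$ and the ``source'' triangle $w\to x\to y$, $w\to y$.

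It remains to manufacture a third common neighbour of $\{x,y\}$. The prototype is $s=3$: a shortest return path $y\to u\to x$ has length $s-1=2$, and its interior vertex $u$ is adjacent in $\Sigma$ to both $y$ (since $y\to u$) and $x$ (since $u\to x$), producing the directed triangle $x\to y\to u\to x$. Here $u\neq z$ because $u\to x$ while $x\to z$ (equality would force $p=2$), and $u\neq w$ because $y\to u$ while $w\to y$ (again forcing $p=2$); thus $z,w,u$ are three distinct common neighbours of $\{x,y\}$, contradicting $a_1=2$. For $s>3$ the interior vertices of a return path need not be adjacent to $x$ in $\Sigma$, so instead I propagate the configuration to an adjacent edge: the same type of swap (via commutativity together with Lemma \ref{intersection numners}(ii)) applied to the pair $(x,z)\in\Gamma_{1,p-1}$ yields, besides $y$, a second common neighbour $y'$ of $x,z$ with $y'\to x$ of type $(1,s-1)$ and $y'\to z$ of type $(1,p-1)$ — the same shape of local configuration one edge over — and iterating this is designed to over-fill the two-triangle budget on some edge.

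The main obstacle is precisely this last step: keeping track of the arc types through the repeated swaps and certifying that the produced vertices are pairwise distinct, so that a genuine third triangle appears on a single edge. Each potential coincidence collapses some type-$(1,r)$ arc with $r>1$ into a digon, hence forces $p=2$ or $s=2$, both excluded by the hypotheses $p>2$ and $s>2$; organizing these coincidence checks is the real content of the argument. Finally the case $p=2$ must be treated on its own: there $x\to z$ and $y\to z$ are edges of $\Gamma$, so $z$ already lies in short circuits with $x$ and $y$, and I expect to run the same ``count the triangles on $\{x,y\}$'' argument with $z$ itself supplying the extra incidences, again using $s>2$ to keep the arc $x\to y$ genuinely directed.
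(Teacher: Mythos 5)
There is a genuine gap, and you named it yourself: the argument is only complete in the subcase $p>2$, $s=3$. For $s>3$ the ``propagate and iterate'' step is a plan rather than a proof (``designed to over-fill'', ``organizing these coincidence checks is the real content''), and the $p=2$ case is likewise deferred (``I expect to run\dots''). The deeper problem is that your search is aimed at the wrong edge: the hypothesis $p^{(1,s-1)}_{(1,p-1),(p-1,1)}\neq0$ together with commutativity forces exactly two classes of common neighbours on an arc of type $(1,s-1)$, namely $P_{(1,p-1),(p-1,1)}(x,y)$ and $P_{(p-1,1),(1,p-1)}(x,y)$, and this is perfectly consistent with $a_1=2$. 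So no amount of coincidence-checking on $\{x,y\}$ alone can produce the third triangle; extra input is needed, and your iteration scheme never materializes it.

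The paper's proof is a one-liner that counts triangles on an arc of type $(1,p-1)$ instead. By Lemma \ref{intersection numners} (ii), $p^{(1,s-1)}_{(1,p-1),(p-1,1)}\neq0$ yields $p^{(1,p-1)}_{(1,s-1),(1,p-1)}\neq0$, and together with commutativity also $p^{(1,p-1)}_{(1,p-1),(s-1,1)}=p^{(1,p-1)}_{(s-1,1),(1,p-1)}\neq0$. Since $s>2$, the three lower-index pairs $((1,s-1),(1,p-1))$, $((1,p-1),(s-1,1))$, $((s-1,1),(1,p-1))$ are pairwise distinct (this uses only $s>2$ and holds even when $p=2$ or $p=s$), so every arc of type $(1,p-1)$ has at least three common $\Sigma$-neighbours, contradicting $a_1=2$ uniformly — no case split, no iteration. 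In fact your own propagation step already contains all the pieces: on the arc $(x,z)\in\Gamma_{1,p-1}$ your vertices $y$ and $y'$ lie in the distinct classes $P_{(1,s-1),(1,p-1)}(x,z)$ and $P_{(s-1,1),(1,p-1)}(x,z)$, and commutativity fills the third class $P_{(1,p-1),(s-1,1)}(x,z)$; the two-triangle budget on $(x,z)$ is exceeded right there, one edge over from where you started, and the distinctness checks reduce to comparing class labels rather than chasing vertex coincidences.
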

\begin{proof}
Suppose for the contrary that $p^{(1,s-1)}_{(1,p-1),(p-1,1)}\neq0$ for some $(1,p-1)\in\tilde{\partial}(\Gamma)$. By Lemma \ref{intersection numners} (ii) and the commutativity of $\Gamma$, we have $p^{(1,p-1)}_{(s-1,1),(1,p-1)}=p^{(1,p-1)}_{(1,p-1),(s-1,1)}=p^{(1,p-1)}_{(1,s-1),(1,p-1)}\neq0$, contrary to the fact that $a_1=2$.
\end{proof}

For each $i\in\{1,2,\ldots,d\}$ and $a_j\in S$ with $1\leq j\leq d-1$, denote $\Gamma_{i}(a_{1},a_{2},\ldots,a_{d-1})$ be the induced subdigraph of $\Gamma$ on the set
\begin{align}
\{(a_1,a_2,\ldots,a_{i-1},b,a_{i},a_{i+1},\ldots,a_{d-1})\mid b\in S\}.\nonumber
\end{align}

In the sequel, we denote $a_{[i]}:=(\underbrace{a,\ldots,a}_{i})$ for $i\geq1$ and $a\in S$. Fix $o\in S$. For $1\leq i\leq d$,  define the digraph $\Gamma^{[i]}$ with vertex set $S^{i}$ and $(\alpha,\beta)$ is an arc of $\Gamma^{[i]}$ whenever $((\alpha,o_{[d-i]}),(\beta,o_{[d-i]}))$ is an arc of $\Gamma$.

In the remainder of this section, we always assume $a_1=q-2$, $c_2=2$, and the underlying graph of $\Gamma^{[m]}$ is a Hamming graph  for some positive integer $m$. Set $T=S^{m-1}\times\{o_{[d-m]}\}$. Note that $\Gamma_i(\alpha)$ is a semicomplete digraph for $1\leq i\leq m$ and $\alpha\in T$. Moreover, $\Gamma_i(\alpha)$ may be a complete graph.

We prove the following results under the above assumptions.

\begin{lemma}\label{proof1}
Let $1\leq i\leq m$ and $\alpha\in T$. Suppose $d_{\Gamma_i(\alpha)}^+(x)=d_{\Gamma_i(\alpha)}^+(y)$ or $d_{\Gamma_i(\alpha)}^-(x)=d_{\Gamma_i(\alpha)}^-(y)$ for  $x,y\in V(\Gamma_i(\alpha))$. If $\Gamma_i(\alpha)$ is not complete, then  $\Gamma_i(\alpha)$ is  a weakly distance-regular digraph of diameter $2$, girth $g$ and $\wz{\partial}(\Gamma_i(\alpha))=\{(0,0),(1,2),(2,1),(1,g-1)\}$  with $g\leq3$.
\end{lemma}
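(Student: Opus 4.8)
The plan is to treat $D:=\Gamma_i(\alpha)$ as a semicomplete digraph spanned by the $q$-clique of $\Sigma$ it induces, and to transfer the regularity of $\Gamma$ onto $D$ by means of one crucial \emph{closure} property, which I would record first as the engine of the argument. Since $\alpha\in T$ and the underlying graph is a Hamming graph, any two vertices of $D$ are $\Sigma$-adjacent; because $a_1=q-2$, their $q-2$ common $\Sigma$-neighbours are exactly the remaining $q-2$ vertices of $D$, so two vertices of $D$ have \emph{no} common neighbour outside $D$. Replacing $\Gamma$ by its reverse if necessary (this swaps every two-way distance $(a,b)$ with $(b,a)$, preserves weak distance-regularity, the underlying Hamming graph, $a_1$, $c_2$, and leaves the claimed set $\{(0,0),(1,2),(2,1),(1,g-1)\}$ invariant), I may assume the hypothesis yields out-regularity: $d^+_D(x)=d^+_D(y)$ for all $x,y\in V(D)$.

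Next I would show $D$ is strongly connected. As $D$ is semicomplete, its condensation into strongly connected components is a transitive tournament; if there were more than one component, a vertex of the top component would have an out-arc to every vertex outside it, whereas a vertex of the bottom component would have out-neighbours only inside it, forcing unequal out-degrees and contradicting out-regularity. Hence $D$ has a single strongly connected component.

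The heart of the proof is to pin down the two-way distances inside $D$. Since $D$ is not complete it carries a one-way arc, say $x\to y$ with $(y,x)\notin A(\Gamma)$, so $\tilde\partial_\Gamma(x,y)=(1,r)$ with $r\ge 2$; I claim $r=2$ for every such arc. The easy direction is that if $r=2$, then a shortest return path $y\to w\to x$ in $\Gamma$ has $w$ adjacent to both $x$ and $y$, so $w$ is a common neighbour and lies in $D$ by closure; the return path stays inside $D$ and $\tilde\partial_D(x,y)=\tilde\partial_\Gamma(x,y)=(1,2)$. Ruling out $r\ge 3$ is where the standing hypotheses $c_2=2$ and commutativity enter, through Lemmas~\ref{s=t}, \ref{pneqt} and~\ref{0}, which restrict the arc types $(1,r)$ that can occur; combined with strong connectivity and the out-regularity hypothesis (so that every one-way arc closes up into a directed triangle inside the clique), these force every one-way arc to have reverse distance exactly $2$. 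Granting this, $\tilde\partial_D=\tilde\partial_\Gamma$ on $D$ and every two-way distance lies in $\{(0,0),(1,1),(1,2),(2,1)\}$; writing $g$ for the girth, the presence (resp.\ absence) of an edge gives exactly $\tilde\partial(D)=\{(0,0),(1,2),(2,1),(1,g-1)\}$ with $g=2$ (resp.\ $g=3$).

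Finally I would verify weak distance-regularity by transferring intersection numbers from $\Gamma$. Fix $u,v\in V(D)$ with $\tilde\partial_D(u,v)=\tilde h$ and count the witnesses $w\in V(D)$ with $\tilde\partial_D(u,w)=\tilde i$, $\tilde\partial_D(w,v)=\tilde j$. Each relevant $\tilde i,\tilde j$ lies in $\{(0,0),(1,1),(1,2),(2,1)\}$, so a witness $w$ is $\Sigma$-adjacent to both $u$ and $v$ (or equals one of them) and hence lies in $D$ by closure; conversely, since $\tilde\partial_D=\tilde\partial_\Gamma$ on $D$, this $D$-count equals the number of such $w$ in all of $\Gamma$, namely $p^{\tilde h}_{\tilde i,\tilde j}(\Gamma)$, which is constant. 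Thus the intersection numbers of $D$ are well defined and $D$ is a semicomplete weakly distance-regular digraph, and Proposition~\ref{complete} then delivers diameter $2$ and girth $g\le 3$. The main obstacle is the third step: showing no one-way arc can have reverse distance $3$ or more, i.e.\ that every one-way arc closes into a directed triangle within the clique $D$; this is precisely where $c_2=2$, commutativity and the out-regularity hypothesis must be combined through the earlier lemmas, while the closure property then makes the concluding transfer essentially automatic.
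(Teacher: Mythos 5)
Your scaffolding matches the paper's proof quite closely: the closure property (since $a_1=q-2$, the common $\Sigma$-neighbours of two distinct vertices of $D:=\Gamma_i(\alpha)$ are exactly the other $q-2$ vertices of $D$, so nothing outside $D$ is a common neighbour), the observation that a return path of length $2$ for a one-way arc stays inside $D$, and the final transfer of intersection numbers via $[\Gamma_i(\alpha)]_{\wz i}(x)\cap[\Gamma_i(\alpha)]_{\wz j^*}(y)=P_{\wz i,\wz j}(x,y)$ are all exactly the paper's steps. But the step you yourself flag as ``the main obstacle'' --- ruling out an arc $(u,v)$ of $D$ with $\partial_\Gamma(v,u)\geq 3$ --- is left unproven, and your guess about which tools it requires is misdirected. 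It does not use $c_2=2$, commutativity, strong connectivity, or Lemmas \ref{s=t}, \ref{pneqt} and \ref{0} (those enter only later, e.g.\ in Proposition \ref{degree-main}). The paper disposes of it in one line using only semicompleteness and the degree hypothesis: if $\partial_\Gamma(v,u)=p-1\geq 3$, then for every out-neighbour $w$ of $v$ in $D$, semicompleteness makes $w$ and $u$ adjacent in $\Sigma$, and $(w,u)\in A(\Gamma)$ would yield the path $(v,w,u)$ of length $2$ from $v$ to $u$; hence $(u,w)\in A(\Gamma)$, so every out-neighbour of $v$ in $D$ is an out-neighbour of $u$, and $v$ itself is an extra one, giving $d^+_{\Gamma_i(\alpha)}(u)\geq d^+_{\Gamma_i(\alpha)}(v)+1$, contradicting $d^+_{\Gamma_i(\alpha)}(u)=d^+_{\Gamma_i(\alpha)}(v)$ (the in-degree case is the mirror image). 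Since your proposal offers no argument at this point, only a list of candidate ingredients, it is genuinely incomplete at its crux.

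Two secondary points. First, your concluding appeal to Proposition \ref{complete} is circular in the paper's logical order: that proposition is itself deduced from this very lemma. It is also unnecessary --- once every one-way arc of $D$ has reverse distance $2$ realized inside $D$, you already have $\wz{\partial}(D)\subseteq\{(0,0),(1,1),(1,2),(2,1)\}$, hence diameter $2$ and girth $g\leq 3$ directly. Second, your reversal reduction silently upgrades the hypothesis ``for each pair $x,y$, the out-degrees or the in-degrees agree'' to global out-regularity of $D$; the paper instead argues per arc, running the out- or in-degree count according to which equality holds for that pair, which is all the degree comparison above needs. This discrepancy is harmless for the paper's applications (which always supply global out-regularity), and your condensation argument for strong connectivity is correct but superfluous, since strong connectivity falls out once the two-way distances are pinned down.
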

\begin{proof}
Let $(u,v)$ be an arc of $\Gamma_i(\alpha)$. Set $\partial_{\Gamma}(v,u)=p-1$ with $p>1$. Since the proofs are similar, without loss of generality, we may assume $d_{\Gamma_i(\alpha)}^+(u)=d_{\Gamma_i(\alpha)}^+(v)$. Suppose that $p>3$.
Since  $\Gamma_i(\alpha)$ is  semicomplete,  each out-neighbour of $v$ in $V(\Gamma_i(\alpha))$  is also an out-neighbour of $u$, which implies $d_{\Gamma_i(\alpha)}^+(u)\geq d_{\Gamma_i(\alpha)}^+(v)+1$, a contradiction.
Hence, $p=2$ or $3$.

Since $\Gamma_i(\alpha)$ is semicomplete, one gets $V(\Gamma_i(\alpha))\setminus\{u,v\}\subseteq\Sigma_1(u)\cap\Sigma_1(v)$. By $a_1=q-2$, we obtain   $V(\Gamma_i(\alpha))\setminus\{u,v\}=\Sigma_1(u)\cap\Sigma_1(v)$.  If $p=2$, then $(u,v)$ is also an edge in $\Gamma_i(\alpha)$; if $p=3$, then there exists $w\in V(\Gamma)$ such that $(v,w,u)$ is a path of $\Gamma$, which implies that $(v,w,u)$ is also a path of $\Gamma_i(\alpha)$ since $\Sigma_1(u)\cap\Sigma_1(v)\subseteq V(\Gamma_i(\alpha))$. Since $\Gamma_i(\alpha)$ is  semicomplete and $(u,v)\in A(\Gamma_i(\alpha))$ was arbitrary, we have $\wz{\partial}_{\Gamma}(x,y)=\wz{\partial}_{\Gamma_i(\alpha)}(x,y)$ for all $x,y\in V(\Gamma_i(\alpha))$. Note that $\Gamma_i(\alpha)$ is not complete. Then $$\wz{\partial}(\Gamma_i(\alpha))=\{(0,0),(1,2),(2,1)\}~\mbox{or}~ \{(0,0),(1,1),(1,2),(2,1)\}.$$ The fact that $\Sigma_1(x)\cap\Sigma_1(y)\subseteq V(\Gamma_i(\alpha))$ for $x,y\in V(\Gamma_i(\alpha))$ implies that $[\Gamma_i(\alpha)]_{\wz{i}}(x)\cap[\Gamma_i(\alpha)]_{\wz{j}^*}(y)=P_{\wz{i},\wz{j}}(x,y)$ for $\wz{i},\wz{j}\in\wz{\partial}(\Gamma_i(\alpha))$. By the weakly distance-regularity of $\Gamma$, $\Gamma_i(\alpha)$ is a weakly distance-regular digraph of diameter $2$ and girth $g$ with $g\leq3$.
\end{proof}

\begin{prop}\label{complete}
Let $\Gamma$ be a semicomplete weakly distance-regular digraph of diameter $d$ and girth $g$.  Then $d=2$, $g\leq3$ and $\wz{\partial}(\Gamma)=\{(0,0),(1,2),(2,1)\}$ or $\{(0,0),(1,1),(1,2),(2,1)\}.$
\end{prop}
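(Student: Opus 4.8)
The plan is to prove Proposition \ref{complete} by analyzing the structure forced on a semicomplete weakly distance-regular digraph. Since $\Gamma$ is semicomplete, its underlying graph is the complete graph $K_n$, so any two distinct vertices are joined by an edge, meaning $\partial_\Gamma(x,y)=1$ or $\partial_\Gamma(y,x)=1$ (or both) for all $x\neq y$. First I would show $d=2$. Observe that for any arc $(x,y)$, every third vertex $z$ is adjacent to both $x$ and $y$ in the underlying complete graph; this lets me bound the return distance $\partial_\Gamma(y,x)$. The key inequality, in the spirit of the proof of Lemma \ref{proof1}, is that if $(x,y)$ is an arc with $\partial_\Gamma(y,x)=r$ large, then a shortest path from $y$ back to $x$ passes through vertices all lying in the common neighbourhood, and semicompleteness forces any out-neighbour of an intermediate vertex to also be an out-neighbour reachable from $x$ in one fewer step, collapsing the return distance. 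I expect to conclude $\partial_\Gamma(y,x)\le 2$ for every arc, whence $d=2$.

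Next I would determine the girth. The girth $g$ is the length of a shortest circuit; since arcs exist and $d=2$, a circuit of length $2$ (an edge) or $3$ must occur. I would argue $g\le 3$ by noting that if $(x,y)$ is a type $(1,2)$ arc, the return path of length $2$ from $y$ to $x$ together with $(x,y)$ yields a circuit of length $3$, while a type $(1,1)$ arc is itself an edge giving a circuit of length $2$; thus $g\in\{2,3\}$. The remaining task is to pin down $\wz{\partial}(\Gamma)$. Since $d=2$ and the underlying graph is complete, every ordered pair $(x,y)$ with $x\neq y$ has $\wz{\partial}_\Gamma(x,y)\in\{(1,1),(1,2),(2,1)\}$, and of course $(0,0)$ for $x=y$. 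Hence $\wz{\partial}(\Gamma)\subseteq\{(0,0),(1,1),(1,2),(2,1)\}$.

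It remains to show that the two listed possibilities are exactly those that can occur, i.e.\ that $(1,2)\in\wz{\partial}(\Gamma)$ always holds and that $(1,1)$ is present precisely when $g=2$. If $g=2$, then by definition a circuit of length $2$ exists, giving a pair at two-way distance $(1,1)$; and since $\Gamma$ is a digraph (not a graph) it must contain at least one non-edge arc, forcing a type $(1,2)$ arc and hence $(1,2),(2,1)\in\wz{\partial}(\Gamma)$, yielding the second set. If $g=3$, then no circuit of length $2$ exists, so no pair has two-way distance $(1,1)$; strong connectivity together with $d=2$ then forces every non-diagonal pair to be of type $(1,2)$ or $(2,1)$, giving the first set. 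Throughout I would lean on strong connectivity to guarantee $(2,1)$ (equivalently $(1,2)$) actually appears, since a purely symmetric relation structure would make $\Gamma$ a complete graph rather than a genuine digraph.

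The main obstacle I anticipate is the bound $d=2$: rigorously ruling out return distances $\partial_\Gamma(y,x)\ge 3$ requires the semicomplete-collapse argument to be made airtight, showing that a shortest return path cannot have length exceeding $2$ without contradicting either semicompleteness or the constancy of intersection numbers. This is precisely the step where the argument of Lemma \ref{proof1} (that out-neighbours of a "far" vertex become out-neighbours of a "near" one, producing a strict valency inequality) must be adapted to the global digraph rather than an induced subdigraph; I would expect to invoke weak distance-regularity to fix valencies and derive the contradiction $d^+_\Gamma(u)\ge d^+_\Gamma(v)+1$ from a hypothetical long return path.
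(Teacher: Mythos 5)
Your proposal is correct and follows essentially the same route as the paper, whose entire proof is the observation that a weakly distance-regular digraph is regular, so Lemma \ref{proof1} (with $\Gamma$ playing the role of its own semicomplete ``subdigraph'') applies directly; your valency-collapse argument $d^+_\Gamma(u)\geq d^+_\Gamma(v)+1$ for an arc with return distance at least $3$ is exactly the mechanism inside that lemma's proof. Your handling of the complete-graph degeneracy via the ``genuine digraph'' convention also matches the paper's implicit exclusion of the complete case in Lemma \ref{proof1}.
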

\begin{proof}
Since $\Gamma$ is regular, from Lemma \ref{proof1}, the desired result follows.
\end{proof}

\begin{lemma}\label{jb4}
Let $x\in V(\Gamma)\setminus V(\Gamma_i(\alpha))$ for $1\leq i\leq m$ and $\alpha\in T$. Then $x$ has at most one neighbour  in $V(\Gamma_i(\alpha))$.
\end{lemma}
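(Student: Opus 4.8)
The plan is to argue by contradiction, exploiting the fact that $\Gamma_i(\alpha)$ is semicomplete together with the standing hypothesis $a_1=q-2$. Suppose that some $x\in V(\Gamma)\setminus V(\Gamma_i(\alpha))$ has (at least) two distinct neighbours $u,v\in V(\Gamma_i(\alpha))$ in the underlying graph $\Sigma$; that is, $\partial_\Sigma(x,u)=\partial_\Sigma(x,v)=1$. The key observation is that the line $\Gamma_i(\alpha)$ contributes many forced common neighbours of $u$ and $v$, and $x$ would be one too many.

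First I would record that, since $\Gamma_i(\alpha)$ is semicomplete, its underlying graph is a complete graph on the $q$ vertices of $V(\Gamma_i(\alpha))$. In particular $\partial_\Sigma(u,v)=1$, so the edge $\{u,v\}$ exists in $\Sigma$, and every one of the remaining $q-2$ vertices of $V(\Gamma_i(\alpha))\setminus\{u,v\}$ is $\Sigma$-adjacent to both $u$ and $v$. Thus these $q-2$ vertices are all common neighbours of $u$ and $v$ lying \emph{inside} the line. Next I would note that $x$ is, by assumption, $\Sigma$-adjacent to both $u$ and $v$ as well, so $x$ is a further common neighbour of $u$ and $v$; crucially, $x\notin V(\Gamma_i(\alpha))$, so $x$ is distinct from the $q-2$ vertices already counted.

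Counting the common neighbours of the adjacent pair $u,v$, I then obtain at least $(q-2)+1=q-1$ of them. On the other hand, for the edge $\{u,v\}$ the number of common neighbours in $\Sigma$ equals $a_1=q-2$ by the standing assumption and the distance-regularity of $\Sigma$. This is the desired contradiction, forcing $x$ to have at most one neighbour in $V(\Gamma_i(\alpha))$.

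The argument is short and essentially a counting estimate, so I do not anticipate a serious obstacle; the only points requiring care are bookkeeping ones. I would make sure that ``neighbour'' is read in the underlying graph $\Sigma$ (so that an arc of $\Gamma$ in either direction between $x$ and a line vertex yields $\Sigma$-distance $1$ and hence a genuine common neighbour counted by $a_1$), and that the $q-2$ interior common neighbours together with the exterior vertex $x$ are genuinely distinct. Once these are in place the bound $q-1>q-2=a_1$ closes the proof, and no use of $c_2=2$ is needed.
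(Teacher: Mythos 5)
Your proof is correct and is essentially identical to the paper's: the paper also observes that two distinct neighbours $u,v\in V(\Gamma_i(\alpha))$ of $x$ would satisfy $\Sigma_1(u)\cap\Sigma_1(v)\supseteq (V(\Gamma_i(\alpha))\setminus\{u,v\})\cup\{x\}$, giving $q-1$ common neighbours against $a_1=q-2$. Your added bookkeeping remarks (distinctness of the counted vertices, reading ``neighbour'' in $\Sigma$, and the non-use of $c_2=2$) are all accurate.
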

\begin{proof}
Suppose   $u,v\in\Sigma_1(x)\cap V(\Gamma_i(\alpha))$. Note that $\Gamma_i(\alpha)$ is semicomplete. Then $\Sigma_1(u)\cap\Sigma_1(v)\supseteq (V(\Gamma_{i}(\alpha))\setminus\{u,v\})\cup\{x\}$. Since $a_1=q-2$, we have $u=v$.
\end{proof}

\begin{lemma}\label{degree-1}
Let $x,y\in V(\Gamma_i(\alpha))$  and $z\in V(\Gamma)\setminus V(\Gamma_i(\alpha))$ for $1\leq i\leq m$ and $\alpha\in T$. Suppose $(x,y)\in\Gamma_{1,p-1}$ for $p\geq2$. If $(x,z)\in\Gamma_{1,s-1}$ for $s\geq2$, then there exists a unique vertex $y'\in P_{(1,s-1),(p-1,1)}(y,z)$ with $y'\notin V(\Gamma_i(\alpha))$.
\end{lemma}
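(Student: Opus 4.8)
\section*{Proof proposal}

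The plan is to first fix the geometry of the relevant quadrangle inside the underlying distance-regular graph $\Sigma$, identify the candidate $y'$, and then use commutativity to transport a nonzero intersection number from $x$ to the pair $(y,z)$. Write $L:=V(\Gamma_i(\alpha))$, which is a clique of $\Sigma$ since $\Gamma_i(\alpha)$ is semicomplete. From $(x,y)\in\Gamma_{1,p-1}$ and $(x,z)\in\Gamma_{1,s-1}$ we read off $x\sim y$, $x\sim z$ in $\Sigma$ and $y\neq x$. As $z\notin L$, Lemma \ref{jb4} tells us $z$ has at most one neighbour in $L$; since $x\in L$ and $x\sim z$, that neighbour is $x$, so $y\not\sim z$. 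Combined with $y\sim x\sim z$ this forces $\partial_\Sigma(y,z)=2$, and then $c_2=2$ gives $\Sigma_1(y)\cap\Sigma_1(z)=\{x,y'\}$ for a unique $y'\neq x$. A second application of Lemma \ref{jb4} (a vertex outside $L$ cannot have two neighbours $x,y'$ in $L$) shows $y'\notin L$; this $y'$ is the claimed vertex.

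For uniqueness I would observe that any $w\in P_{(1,s-1),(p-1,1)}(y,z)$ satisfies $\partial_\Gamma(y,w)=\partial_\Gamma(z,w)=1$, hence $w\in\Sigma_1(y)\cap\Sigma_1(z)=\{x,y'\}$; since $x\in L$ and $y'\notin L$, at most one element of $P_{(1,s-1),(p-1,1)}(y,z)$ can lie outside $L$, namely $y'$. For existence the key move is commutativity: the vertex $x$ itself lies in $P_{(p-1,1),(1,s-1)}(y,z)$, because $\tilde\partial(y,x)=(p-1,1)$ and $\tilde\partial(z,x)=(s-1,1)=(1,s-1)^{*}$. Hence $p^{\tilde h}_{(p-1,1),(1,s-1)}\neq0$ with $\tilde h=\tilde\partial(y,z)$, and by commutativity $p^{\tilde h}_{(1,s-1),(p-1,1)}\neq0$, so $P_{(1,s-1),(p-1,1)}(y,z)\neq\emptyset$. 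When $(p,s)\neq(2,2)$ one checks $x\notin P_{(1,s-1),(p-1,1)}(y,z)$, since $\tilde\partial(y,x)=(p-1,1)\neq(1,s-1)$; then the nonempty set $P_{(1,s-1),(p-1,1)}(y,z)\subseteq\{x,y'\}$ collapses to $\{y'\}$, giving $y'\in P_{(1,s-1),(p-1,1)}(y,z)$ as required.

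I expect the genuine obstacle to be the degenerate case $p=s=2$, where $x$ \emph{does} belong to $P_{(1,1),(1,1)}(y,z)$, so mere nonemptiness cannot separate $x$ from $y'$. Here I would first compute $\tilde\partial(y,z)=(2,2)$ directly: the arcs $y\to x\to z$ and $z\to x\to y$ (all of type $(1,1)$) give directed paths of length $2$, matching $\partial_\Sigma(y,z)=2$. Then, since $c_2=2$ and $p^{(2,2)}_{(1,1),(1,1)}\neq0$ (witnessed by $x$), Lemma \ref{s=t}(i) forces $p^{(2,2)}_{(1,1),(1,1)}=2$. Thus $P_{(1,1),(1,1)}(y,z)=\{x,y'\}$, so again $y'\in P_{(1,s-1),(p-1,1)}(y,z)$, and in every case $y'$ is the unique element of this set lying outside $L$.
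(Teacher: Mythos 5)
Your proposal is correct and follows essentially the same route as the paper's proof: Lemma \ref{jb4} forces $(y,z)\in\Sigma_2$, $c_2=2$ yields $\Sigma_1(y)\cap\Sigma_1(z)=\{x,y'\}$ with $y'\notin V(\Gamma_i(\alpha))$ by Lemma \ref{jb4} again, then commutativity applied to $x\in P_{(p-1,1),(1,s-1)}(y,z)$ handles $(p,s)\neq(2,2)$, and Lemma \ref{s=t}(i) handles $p=s=2$. Your only additions --- spelling out why $x\notin P_{(1,s-1),(p-1,1)}(y,z)$ when $(p,s)\neq(2,2)$ and the explicit uniqueness check --- are details the paper leaves implicit, and they are carried out correctly.
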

\begin{proof}
In view of  Lemma \ref{jb4}, we have $(y,z)\in\Sigma_2$.  Since $c_2=2$, there exists a unique vertex $y'$ such that $\Sigma_1(y)\cap\Sigma_1(z)=\{x,y'\}$. By Lemma \ref{jb4}, we obtain  $y'\notin V(\Gamma_i(\alpha))$.
Observe that $x\in P_{(p-1,1),(1,s-1)}(y,z)$. If $p>2$ or $s>2$, by the commutativity of $\Gamma$, then $y'\in P_{(1,s-1),(p-1,1)}(y,z)$; if $p=s=2$, by $(y,z)\in\Sigma_2$, then $(y,z)\in\Gamma_{2,2}$, which implies $y'\in P_{(1,1),(1,1)}(y,z)$ from Lemma \ref{s=t} (i). 
Thus, the desired result is valid.
\end{proof}

\begin{lemma}\label{degree-2}
Let $x,y\in V(\Gamma_i(\alpha))$ and $z\in V(\Gamma)\setminus V(\Gamma_i(\alpha))$ for $1\leq i\leq m$ and $\alpha\in T$ . Suppose $p=2$ or $(x,z)\notin\Gamma_{2,2}$. Then the following hold:
\begin{itemize}
\item[{\rm(i)}] If  $(x,y)\in\Gamma_{1,p-1}$ and $(y,z)\in A(\Gamma)$, then there exists a unique vertex $y'\in V(\Gamma)\setminus V(\Gamma_i(\alpha))$ such that $(x,y',z)$ is a path in $\Gamma$;

\item[{\rm(ii)}] If  $(y,x)\in\Gamma_{1,p-1}$ and $(z,y)\in A(\Gamma)$, then there exists a unique vertex $y'\in V(\Gamma)\setminus V(\Gamma_i(\alpha))$ such that $(z,y',y)$ is a path in $\Gamma$.
\end{itemize}
\end{lemma}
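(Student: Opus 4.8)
The two parts are interchanged by passing to the reverse digraph $\Gamma^{*}$ (reverse every arc): $\Gamma^{*}$ is again a commutative weakly distance-regular digraph with the same underlying graph $\Sigma$, the same standing parameters $a_1=q-2$, $c_2=2$, and each $\Gamma_i(\alpha)$ goes to its reversal, so part (ii) follows from part (i) applied to $\Gamma^{*}$ (the roles of in- and out-neighbours being swapped). Hence it suffices to prove (i), and I would first fix the undirected picture. Because $z\notin V(\Gamma_i(\alpha))$ already has the neighbour $y\in V(\Gamma_i(\alpha))$ and $x\neq y$, Lemma \ref{jb4} forbids $x\sim z$; together with the path $x\to y\to z$ this forces $\partial_{\Sigma}(x,z)=2$. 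Then $c_2=2$ gives $\Sigma_1(x)\cap\Sigma_1(z)=\{y,y'\}$ with exactly two elements, and Lemma \ref{jb4} again forces $y'\notin V(\Gamma_i(\alpha))$. Any vertex completing a path $(x,w,z)$ is a common neighbour of $x$ and $z$, hence lies in $\{y,y'\}$; as $y\in V(\Gamma_i(\alpha))$, the only admissible choice outside $V(\Gamma_i(\alpha))$ is $y'$, so uniqueness is automatic and the entire task is to show that $y'$ carries a directed path $x\to y'\to z$.

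Write $\tilde\partial_{\Gamma}(x,z)=(2,r)$ (the path $x\to y\to z$ gives $\partial_{\Gamma}(x,z)=2$) and $\tilde\partial_{\Gamma}(y,z)=(1,s-1)$. The path $x\to y\to z$ shows $y\in P_{(1,p-1),(1,s-1)}(x,z)$, so $p^{(2,r)}_{(1,p-1),(1,s-1)}\neq0$, whence by commutativity $p^{(2,r)}_{(1,s-1),(1,p-1)}\neq0$. The latter produces a vertex $w$ with $\tilde\partial_{\Gamma}(x,w)=(1,s-1)$ and $\tilde\partial_{\Gamma}(z,w)=(p-1,1)$, i.e. $x\to w\to z$, and $w\in\{y,y'\}$. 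If $p\neq s$ then $\tilde\partial_{\Gamma}(x,w)=(1,s-1)\neq(1,p-1)=\tilde\partial_{\Gamma}(x,y)$, so $w=y'$ and we are done; observe that no use of the hypothesis is needed in this branch. It remains to handle $p=s$.

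When $p=s=2$ we have $\partial_{\Gamma}(z,x)\leq\partial_{\Gamma}(z,y)+\partial_{\Gamma}(y,x)=2$, so $(x,z)\in\Gamma_{2,2}$; the hypothesis holds through ``$p=2$'', and Lemma \ref{s=t}(i) yields $p^{(2,2)}_{(1,1),(1,1)}=2$, so $P_{(1,1),(1,1)}(x,z)$ contains a vertex besides $y$, necessarily $y'$, giving $x\to y'\to z$. When $p=s>2$ and $(x,z)\in\Gamma_{2,2}$, the hypothesis is violated, and this exclusion is sharp: Lemma \ref{s=t}(ii) gives $p^{(2,2)}_{(1,p-1),(1,p-1)}=1$, so $y$ is the only forward $2$-path and the conclusion genuinely fails. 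Thus the hypothesis precisely removes the one family of bad configurations.

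The remaining case, $p=s>2$ with $(x,z)\notin\Gamma_{2,2}$ (so $r>2$), is the heart of the matter and the step I expect to be hardest, since here Lemma \ref{s=t} no longer applies and commutativity alone only reproduces $y$. The two edges still to be oriented are $xy'$ and $y'z$, with $x\to y$ and $y\to z$ fixed; I want both $x\to y'$ and $y'\to z$. The orientation ``$z\to y'$ and $y'\to x$'' is impossible, for it would give $z\to y'\to x$ and hence $\partial_{\Gamma}(z,x)\leq2$, contradicting $r>2$. The real obstacle is to exclude the two mixed orientations, in which $y'$ is a common out-neighbour ($x\to y'$, $z\to y'$) or a common in-neighbour ($y'\to x$, $y'\to z$) of $\{x,z\}$; in either case $y'$ fails to lie on a forward path from $x$ to $z$, so both must be ruled out. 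My plan is to do this by a local analysis anchored on $y'$: since $y'\notin V(\Gamma_i(\alpha))$, Lemma \ref{jb4} makes $x$ the unique neighbour of $y'$ in $V(\Gamma_i(\alpha))$, and applying the distance-$2$ analysis (via $c_2=2$ and Lemma \ref{degree-1}) to the auxiliary pairs formed by $y'$ with vertices of $V(\Gamma_i(\alpha))$, together with the transpose relations of Lemma \ref{intersection numners}(ii) and the vanishing of the reverse count forced by $r>2$, should pin the orientations of $xy'$ and $y'z$ to the desired $x\to y'\to z$. Converting the absence of a short reverse path into the presence of the second forward $2$-path is where the genuine work lies.
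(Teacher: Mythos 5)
Your reduction of (ii) to (i) via the reverse digraph, the identification of $y'$ through Lemma \ref{jb4} and $c_2=2$, the uniqueness argument, and the branches $p\neq s$ and $p=s=2$ are all correct; indeed your commutativity device in the $p\neq s$ branch is exactly the engine of the paper's proof. But there is a genuine gap precisely where you flag it: in the case $p=s>2$ with $(x,z)\in\Gamma_{2,b}$, $b>2$, you exclude only the reverse orientation $z\rightarrow y'\rightarrow x$ and leave the two mixed orientations of $y'$ to an unexecuted ``plan''. That plan, as stated, would not close the case: Lemmas \ref{degree-1}--\ref{degree-3} all govern pairs consisting of one vertex inside $V(\Gamma_i(\alpha))$ and one vertex outside it, whereas the undecided edge between $y'$ and $z$ joins two vertices \emph{both} outside $V(\Gamma_i(\alpha))$, so the ``local analysis anchored on $y'$'' has no applicable lemma for that edge, and the transpose relations of Lemma \ref{intersection numners}(ii) merely transport intersection numbers without producing the vertex you need.

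The missing step is closed by the same trick you already used, applied to the hypothetical mixed relation instead of the forward one. Suppose $y'\in P_{(1,u-1),(v-1,1)}(x,z)$, i.e.\ $x\rightarrow y'$ and $z\rightarrow y'$. Since $b>2$ excludes any common neighbour $w$ with $w\rightarrow x$ and $z\rightarrow w$ (such a $w$ would give $\partial_{\Gamma}(z,x)\leq2$), necessarily $u,v>2$. Commutativity gives $p^{(2,b)}_{(v-1,1),(1,u-1)}=p^{(2,b)}_{(1,u-1),(v-1,1)}\neq0$, so there exists $w'\in P_{(v-1,1),(1,u-1)}(x,z)$, a common neighbour with $w'\rightarrow x$ and $w'\rightarrow z$ and no reverse arcs. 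Then $y$, $y'$, $w'$ are pairwise distinct ($y\rightarrow z$ but $y'\not\rightarrow z$; $x\rightarrow y$ and $x\rightarrow y'$ but $x\not\rightarrow w'$), yielding three vertices in $\Sigma_1(x)\cap\Sigma_1(z)$, contrary to $c_2=2$; the common-in-neighbour orientation is excluded symmetrically, so $y'$ must carry the forward path. This is exactly how the paper argues: it shows $p^{(2,b)}_{(r-1,1),(s-1,1)}=0$ for all $r,s>1$ and $p^{(2,b)}_{(1,r-1),(s-1,1)}=p^{(2,b)}_{(s-1,1),(1,r-1)}=0$ for $r,s>2$, whence every common neighbour of $x$ and $z$ --- in particular $y'$ --- lies in $\bigcup_{r,s>1}P_{(1,r-1),(1,s-1)}(x,z)$. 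So apart from this unproved central case your outline coincides with the paper's proof, but as submitted the argument is incomplete.
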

\begin{proof}
(i)~In view of  Lemma \ref{jb4}, we have $(x,z)\in\Sigma_2$.  Since $c_2=2$, there exists a unique vertex $y'$ such that $\Sigma_1(x)\cap\Sigma_1(z)=\{y,y'\}$. By Lemma \ref{jb4}, we obtain  $y'\notin V(\Gamma_i(\alpha))$.  Suppose $(x,z)\notin\Gamma_{2,2}$. Since $(x,z)\in\Sigma_2$, we have $(x,z)\in\Gamma_{2,b}$ with $b>2$, which implies $p_{(r-1,1),(s-1,1)}^{(2,b)}=0$ for all $r,s>1$. Note that $p_{(1,p-1),(1,p'-1)}^{(2,b)}\neq0$ for $\partial_{\Gamma}(z,y)=p'-1$. If $p_{(1,r-1),(s-1,1)}^{(2,b)}\neq0$ for some $r,s>2$,  by  the commutativity of $\Gamma$, then $p_{(s-1,1),(1,r-1)}^{(2,b)}=p_{(1,r-1),(s-1,1)}^{(2,b)}\neq0$, contrary to the fact that $c_2=2$. It follows that $p_{(1,r-1),(s-1,1)}^{(2,b)}=p_{(s-1,1),(1,r-1)}^{(2,b)}=0$ with $r,s>2$. Then $\bigcup_{r,s>1}P_{(1,r-1),(1,s-1)}(x,z)=\{y,y'\}$. Suppose $(x,z)\in\Gamma_{2,2}$. Then $p=2$. By Lemma \ref{s=t}, we obtain  $P_{(1,1),(1,1)}(x,z)=\{y,y'\}$. Thus, the desired result is valid.

(ii)~The proof is similar to  (i), hence omitted.
\end{proof}


\begin{lemma}\label{degree-3}
Let $x,y\in V(\Gamma_i(\alpha))$ and $z\in V(\Gamma)\setminus V(\Gamma_i(\alpha))$ for $1\leq i\leq m$ and $\alpha\in T$ . Suppose  $(x,y)\in\Gamma_{1,p-1}$ for $p>2$. 
If $(y,z)\in\Gamma_{1,s-1}$ with $(x,z)\in\Gamma_{2,2}$ for $s\geq2$, then $s=p$ and there exists a unique vertex $y'\in P_{(1,p-1),(1,p-1)}(z,x)$ with $y'\notin V(\Gamma_i(\alpha))$.
\end{lemma}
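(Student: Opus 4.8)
The plan is to translate the hypotheses into two-way distances, apply Lemma \ref{s=t} twice (once to $(x,z)$ and once to $(z,x)$), and close with a one-line counting argument powered by $c_2=2$. First I would record that $(x,y)\in\Gamma_{1,p-1}$ and $(y,z)\in\Gamma_{1,s-1}$ say exactly $x\to y\to z$ with $\partial_\Gamma(y,x)=p-1$ and $\partial_\Gamma(z,y)=s-1$. Reading off the definition of $P_{\wz i,\wz j}$, this places $y\in P_{(1,p-1),(1,s-1)}(x,z)$, so $p^{(2,2)}_{(1,p-1),(1,s-1)}\neq0$. Since $(x,z)\in\Gamma_{2,2}$ and $c_2=2$, Lemma \ref{s=t} immediately yields $s=p$; as $p>2$ we land in case (ii), giving $p\in\{3,4\}$ and $p^{(2,2)}_{(1,p-1),(1,p-1)}=1$.

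Next I would locate $y'$ and fix the orientations around it. Because $c_2=2$, the vertices $x$ and $z$ have exactly two common neighbours in $\Sigma$; one is $y$, and I call the other $y'$, so uniqueness is automatic. Applying Lemma \ref{jb4} to $z$ (which lies outside $V(\Gamma_i(\alpha))$ yet is adjacent to $y\in V(\Gamma_i(\alpha))$) gives $y'\notin V(\Gamma_i(\alpha))$. To orient the edges at $y'$, I would use $\partial_\Gamma(z,x)=2$: a geodesic $z\to w\to x$ runs through a common neighbour $w$, and $w\neq y$ since $\partial_\Gamma(z,y)=p-1>1$ forbids the arc $z\to y$; hence $w=y'$, producing the arcs $z\to y'$ and $y'\to x$. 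Writing $\wz\partial_\Gamma(z,y')=(1,a)$ and $\wz\partial_\Gamma(x,y')=(b,1)$, the vertex $y'$ witnesses $p^{(2,2)}_{(1,a),(1,b)}\neq0$ for the pair $(z,x)\in\Gamma_{2,2}$, so a second application of Lemma \ref{s=t} forces $a=b$.

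The main obstacle is upgrading ``$y'$ is a reversed common neighbour'' to the precise claim $y'\in P_{(1,p-1),(1,p-1)}(z,x)$, i.e. showing $a=p-1$. My plan is a counting argument. I would first rule out $a=1$: case (i) of Lemma \ref{s=t} would give $p^{(2,2)}_{(1,1),(1,1)}=2$, forcing \emph{both} common neighbours of $z,x$ to have profile $(1,1)$, which is impossible for $y$ since $\wz\partial_\Gamma(z,y)=(p-1,1)$ with $p-1>1$. Thus $a\in\{2,3\}$ and case (ii) gives $p^{(2,2)}_{(1,a),(1,a)}=1$. Now I would evaluate this same intersection number at the ordered pair $(x,z)\in\Gamma_{2,2}$: the set $P_{(1,a),(1,a)}(x,z)$ then has a single element $w$, which is necessarily a common neighbour of $x$ and $z$ and hence lies in $\{y,y'\}$. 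It cannot be $y'$, because $\wz\partial_\Gamma(x,y')=(a,1)\neq(1,a)$, so $w=y$, whence $\wz\partial_\Gamma(x,y)=(1,a)$ and therefore $a=p-1$.

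Combining these, $\wz\partial_\Gamma(z,y')=(1,p-1)$ and $\wz\partial_\Gamma(x,y')=(p-1,1)$, so $y'\in\Gamma_{(1,p-1)}(z)\cap\Gamma_{(p-1,1)}(x)=P_{(1,p-1),(1,p-1)}(z,x)$; together with $y'\notin V(\Gamma_i(\alpha))$ and the uniqueness forced by $c_2=2$, this is exactly the assertion. I expect the only genuinely delicate point to be the last counting step, since every other claim is a direct reading of the hypotheses or a citation of Lemmas \ref{s=t} and \ref{jb4}; the trick is to apply Lemma \ref{s=t} to the reversed pair $(z,x)$ and then transport the resulting value $p^{(2,2)}_{(1,a),(1,a)}=1$ back to the pair $(x,z)$.
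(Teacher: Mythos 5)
Your proposal is correct and takes essentially the same route as the paper: both get $s=p$ and $p^{(2,2)}_{(1,p-1),(1,p-1)}=1$ from Lemma \ref{s=t}, use $c_2=2$ to identify $y'$ as the second common neighbour, and use Lemma \ref{jb4} for $y'\notin V(\Gamma_i(\alpha))$. The only difference is in the last step, where the paper concludes directly that the nonempty singleton $P_{(1,p-1),(1,p-1)}(z,x)$ (nonempty since $(z,x)\in\Gamma_{2,2}$) consists of a common neighbour of $z$ and $x$ other than $y$ and hence equals $\{y'\}$, while you reach the same conclusion by a valid but longer detour (a second application of Lemma \ref{s=t} to $(z,x)$ followed by the counting step pinning $a=p-1$).
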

\begin{proof}
In view of  Lemma \ref{jb4}, we have $(x,z)\in\Sigma_2$.  Since $c_2=2$, there exists a unique vertex $y'$ such that $\Sigma_1(x)\cap\Sigma_1(z)=\{y,y'\}$. By Lemma \ref{jb4}, we obtain  $y'\notin V(\Gamma_i(\alpha))$. Note that $y\in P_{(1,p-1),(1,s-1)}(x,z)$.
According to Lemma \ref{s=t}, we have $s=p$ and $p_{(1,p-1),(1,p-1)}^{(2,2)}=1$. Then $y'\in P_{(1,p-1),(1,p-1)}(z,x)$. Thus, this lemma is valid.
%
\end{proof}

In Lemmas \ref{jb4}--\ref{degree-3}, we summarize the relations between the vertices in $V(\Gamma_i(\alpha))$ and the vertices in $V(\Gamma)\setminus V(\Gamma_i(\alpha))$. Proposition  \ref{degree-main} will be used frequently in the proof of our main results, and its proof relies on Lemmas \ref{jb4}--\ref{degree-3}. 


\begin{prop}\label{degree-main}
Let $x,y\in V(\Gamma_i(\alpha))$ for $1\leq i\leq m$ and $\alpha\in T$. Suppose $(x,y)\in\Gamma_{1,p-1}$ for $p\geq2$. Then, exactly one of the following holds:
\begin{itemize}
\item[{\rm(i)}] $d^{+}_{\Gamma_i(\alpha)}(x)=d^{+}_{\Gamma_i(\alpha)}(y)$, $d^{-}_{\Gamma_i(\alpha)}(x)=d^{-}_{\Gamma_i(\alpha)}(y)$, and $p=2$ or $p_{(2,2),(s-1,1)}^{(1,p-1)}=0$ for all $(1,s-1)\in\wz{\partial}(\Gamma)$;

\item[{\rm(ii)}] $d^{+}_{\Gamma_i(\alpha)}(x)>d^{+}_{\Gamma_i(\alpha)}(y)$, $d^{-}_{\Gamma_i(\alpha)}(x)<d^{-}_{\Gamma_i(\alpha)}(y)$, and $p\neq2$ and $p_{(2,2),(s-1,1)}^{(1,p-1)}\neq0$ for some $(1,s-1)\in\wz{\partial}(\Gamma)$.
\end{itemize}
\end{prop}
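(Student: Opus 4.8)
The plan is to compare the degrees inside $\Gamma_i(\alpha)$ by passing to the complementary neighbours lying outside $V(\Gamma_i(\alpha))$. Since $\Gamma$ is regular — a weakly distance-regular digraph has equal in- and out-valency at every vertex because $k_{\wz{i}}=k_{\wz{i}^*}$ — the common out-valency $k$ splits as $d^{+}_{\Gamma}(x)=d^{+}_{\Gamma_i(\alpha)}(x)+E^{+}(x)$, where $E^{+}(x)$ counts the out-neighbours of $x$ in $V(\Gamma)\setminus V(\Gamma_i(\alpha))$, and similarly for $y$ and for in-neighbours. Hence
\[
d^{+}_{\Gamma_i(\alpha)}(x)-d^{+}_{\Gamma_i(\alpha)}(y)=E^{+}(y)-E^{+}(x),\qquad
d^{-}_{\Gamma_i(\alpha)}(x)-d^{-}_{\Gamma_i(\alpha)}(y)=E^{-}(y)-E^{-}(x),
\]
so the whole statement reduces to counting external neighbours. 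Set $N:=\sum_{(1,s-1)\in\wz{\partial}(\Gamma)}p^{(1,p-1)}_{(2,2),(s-1,1)}$, and observe that the hypothesis separating (i) from (ii) is exactly ``$p=2$ or $N=0$'' versus ``$p\neq2$ and $N>0$'', two complementary conditions; thus ``exactly one holds'' will be automatic once the degree differences are matched to $N$.

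By Lemma \ref{jb4}, every external out-neighbour $z$ of $y$ has $y$ as its unique neighbour in $V(\Gamma_i(\alpha))$, so $x$ is not adjacent to $z$ and the path $x\to y\to z$ forces $\wz{\partial}_{\Gamma}(x,z)=(2,b)$ with $b\geq2$. Call $z$ \emph{bad} if $b=2$, i.e. $(x,z)\in\Gamma_{2,2}$; the bad vertices are precisely the ones enumerated by $N$. I would then match external out-neighbours: Lemma \ref{degree-1} sends each external out-neighbour $z'$ of $x$ to a well-defined external out-neighbour $\psi(z')$ of $y$, while Lemma \ref{degree-2}(i) sends each \emph{non-bad} external out-neighbour $z$ of $y$ to an external out-neighbour $\phi(z)$ of $x$. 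The goal is to show $\phi$ and $\psi$ are mutually inverse bijections between $\{z':(x,z')\in A(\Gamma),\,z'\notin V(\Gamma_i(\alpha))\}$ and the non-bad external out-neighbours of $y$. Granting this, $E^{+}(x)=E^{+}(y)-N$, whence $d^{+}_{\Gamma_i(\alpha)}(x)-d^{+}_{\Gamma_i(\alpha)}(y)=N$.

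For the in-degrees I would run the symmetric argument: Lemma \ref{degree-2}(ii) matches in-neighbours exactly as (i) matched out-neighbours, and Lemma \ref{degree-3} attaches to each bad external out-neighbour $z$ of $y$ a unique external in-neighbour of $x$ (the vertex $y'\in P_{(1,p-1),(1,p-1)}(z,x)$) that is left unmatched on the $y$-side, giving $d^{-}_{\Gamma_i(\alpha)}(x)-d^{-}_{\Gamma_i(\alpha)}(y)=-N$. Combining: if $p=2$ or $N=0$ both differences vanish, which is (i); if $p\neq2$ and $N>0$ then $d^{+}_{\Gamma_i(\alpha)}(x)>d^{+}_{\Gamma_i(\alpha)}(y)$ and $d^{-}_{\Gamma_i(\alpha)}(x)<d^{-}_{\Gamma_i(\alpha)}(y)$, which is (ii). The case $p=2$ is immediate, since then the hypothesis of Lemma \ref{degree-2}(i) holds for \emph{every} external out-neighbour of $y$, so $\psi$ is a total bijection and $N$ plays no role. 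Finally, for $p\geq4$ a direct semicompleteness argument (any out-neighbour $w$ of $y$ in $\Gamma_i(\alpha)$ with $(x,w)\notin A(\Gamma)$ would give a path $y\to w\to x$, so $\partial_{\Gamma}(y,x)\leq2<p-1$) shows every out-neighbour of $y$ inside $\Gamma_i(\alpha)$ is one of $x$, forcing $d^{+}_{\Gamma_i(\alpha)}(x)>d^{+}_{\Gamma_i(\alpha)}(y)$; this is consistent with (ii), and Lemma \ref{s=t} then rules out $p\geq5$.

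The main obstacle is the bijection claim of the second paragraph: I must verify that $\psi$ never lands on a bad out-neighbour of $y$ and reaches every non-bad one, that $\phi$ and $\psi$ are injective, and that the arc types produced by Lemmas \ref{degree-1} and \ref{degree-2}(i) are compatible enough for $\phi\circ\psi$ and $\psi\circ\phi$ to be the identity. This is exactly where Lemma \ref{degree-3} does the decisive work: forcing $s=p$ for the bad vertices is what cleanly isolates them from the matched neighbours and makes the two matchings reverse one another.
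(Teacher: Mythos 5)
Your proposal is correct and is essentially the paper's own argument: both reduce the statement, via the regularity of $\Gamma$, to counting neighbours outside $V(\Gamma_i(\alpha))$, and both build the same matching from Lemmas \ref{jb4} and \ref{degree-1}, with Lemma \ref{degree-2}(i)--(ii) supplying surjectivity in case (i) and Lemma \ref{degree-3} supplying an unmatched vertex (an in-neighbour of $x$ in place of an out-neighbour) in case (ii). Your only deviation is a quantitative refinement — computing the exact difference $d^{+}_{\Gamma_i(\alpha)}(x)-d^{+}_{\Gamma_i(\alpha)}(y)=N$ with $N=\sum_{s}p^{(1,p-1)}_{(2,2),(s-1,1)}$, plus the extraneous semicompleteness digression for $p\geq4$ — whereas the paper gets by with injectivity of $\varphi$ together with surjectivity or its failure at a single vertex, so your additional bookkeeping is sound but not needed.
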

\begin{proof}


Since $c_2=2$, from Lemmas \ref{jb4} and \ref{degree-1},  there exists a mapping $\varphi$ from $\Gamma_1(x)\setminus V(\Gamma_i(\alpha))$ to $\Gamma_1(y)\setminus V(\Gamma_i(\alpha))$ such that $(x,\varphi(w))\in\Sigma_2$ and $\Sigma_1(x)\cap\Sigma_1(\varphi(w))=\{y,w\}$. Let $w_1,w_2\in\Gamma_1(x)\setminus V(\Gamma_i(\alpha))$.  If $\varphi(w_1)=\varphi(w_2)$,  then $y,w_1,w_2\in\Sigma_1(x)\cap\Sigma_1(\varphi(w_1))$, and so $w_1=w_2$ since $c_2=2$. Thus, $\varphi$ is an injection.


\textbf{Case 1.} $p=2$ or $p_{(2,2),(s-1,1)}^{(1,p-1)}=0$ for all $(1,s-1)\in\wz{\partial}(\Gamma)$.

Let $w'\in\Gamma_1(y)\setminus V(\Gamma_i(\alpha))$. Lemma \ref{jb4} implies $(x,w')\in\Sigma_2$. By
Lemma \ref{degree-2} (i), there exists $w\in\Gamma_1(x)\setminus V(\Gamma_i(\alpha))$ such that $\Sigma_1(x)\cap\Sigma_1(w')=\{w,y\}$. Thus, $w'\in{\rm Im}(\varphi)$, and so $\varphi$ is a surjection. It follows that $\varphi$ is a bijection. Then $|\Gamma_1(x)\setminus V(\Gamma_i(\alpha))|=|\Gamma_1(y)\setminus V(\Gamma_i(\alpha))|$, and so $d^+_{\Gamma}(x)-d^+_{\Gamma_i(\alpha)}(x)=d^+_{\Gamma}(y)-d^+_{\Gamma_i(\alpha)}(y)$. Hence, $d^+_{\Gamma_i(\alpha)}(x)=d^+_{\Gamma_i(\alpha)}(y)$.  By the similar argument, one gets $d^-_{\Gamma_i(\alpha)}(x)=d^-_{\Gamma_i(\alpha)}(y)$. Therefore, (i) holds.


\textbf{Case 2.} $p\neq2$ and $p_{(2,2),(s-1,1)}^{(1,p-1)}\neq0$ for some $(1,s-1)\in\wz{\partial}(\Gamma)$.

Pick a vertex $w'\in P_{(2,2),(s-1,1)}(x,y)$. Then $(x,w')\in\Sigma_2$. Since $\Gamma_{i}(\alpha)$ is semicomplete, one has $w'\in\Gamma_1(y)\setminus V(\Gamma_i(\alpha))$. Since $c_2=2$, there exists a unique vertex $w$ such that $\Sigma_1(x)\cap\Sigma_1(w')=\{y,w\}$. In view of Lemma \ref{degree-3}, $w$ is an in-neighbour of $x$. Thus, $w'\notin{\rm Im}(\varphi)$, and so $\varphi$ is not a surjection. It follows that $|\Gamma_1(x)\setminus V(\Gamma_i(\alpha))|<|\Gamma_1(y)\setminus V(\Gamma_i(\alpha))|$, and so $d^+_{\Gamma}(x)-d^+_{\Gamma_i(\alpha)}(x)<d^+_{\Gamma}(y)-d^+_{\Gamma_i(\alpha)}(y)$. Hence, $d^+_{\Gamma_i(\alpha)}(x)>d^+_{\Gamma_i(\alpha)}(y)$.  By the similar argument, one gets $d^-_{\Gamma_i(\alpha)}(x)<d^-_{\Gamma_i(\alpha)}(y)$. Therefore, (ii) holds.
%
\end{proof}

\begin{lemma}\label{p=4}
Let $x,y\in V(\Gamma_i(\alpha))$ for $1\leq i\leq m$ and $\alpha\in T$.    If $d_{\Gamma_i(\alpha)}^+(x)>d_{\Gamma_i(\alpha)}^+(y)$, then $(x,y)\in\Gamma_{1,3}$ and $p_{(2,2),(3,1)}^{(1,3)}\neq0$.
\end{lemma}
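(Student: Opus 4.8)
The plan is to pin down both the direction and the two-way distance of the pair $\{x,y\}$, and then to exclude the value $p=3$.

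First I would reduce to the case where $(x,y)$ is an arc with $\partial_\Gamma(y,x)>1$. Since $\Gamma_i(\alpha)$ is semicomplete, $x$ and $y$ are adjacent in $\Sigma$, so (as $d^+_{\Gamma_i(\alpha)}(x)>d^+_{\Gamma_i(\alpha)}(y)$ forces $x\neq y$) exactly one of the following occurs: $(x,y)\in\Gamma_{1,1}$ (an edge), or $(y,x)\in\Gamma_{1,p'-1}$ with $p'>2$, or $(x,y)\in\Gamma_{1,p-1}$ with $p>2$. The hypothesis rules out the first two. Proposition \ref{degree-main} shows that the out-degree in $\Gamma_i(\alpha)$ never increases along an arc, and is unchanged across an edge (case (i), since case (ii) requires $p\neq2$). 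Applying it to $(x,y)\in\Gamma_{1,1}$ would give $d^+_{\Gamma_i(\alpha)}(x)=d^+_{\Gamma_i(\alpha)}(y)$, and applying it to $(y,x)\in\Gamma_{1,p'-1}$ would give $d^+_{\Gamma_i(\alpha)}(y)\geq d^+_{\Gamma_i(\alpha)}(x)$; both contradict the hypothesis. Hence $(x,y)\in\Gamma_{1,p-1}$ with $p>2$, and since $d^+_{\Gamma_i(\alpha)}(x)>d^+_{\Gamma_i(\alpha)}(y)$ we land in case (ii) of Proposition \ref{degree-main}.

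Second, I would extract the relevant intersection number. Case (ii) yields $p^{(1,p-1)}_{(2,2),(s-1,1)}\neq0$ for some $(1,s-1)\in\wz\partial(\Gamma)$. By Lemma \ref{intersection numners} (ii) this is equivalent to $p^{(2,2)}_{(1,p-1),(1,s-1)}\neq0$, so Lemma \ref{s=t} applies and gives $s=p$ together with $p\in\{2,3,4\}$; since $p>2$ we obtain $p\in\{3,4\}$ and $p^{(1,p-1)}_{(2,2),(p-1,1)}\neq0$.

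The main obstacle is to exclude $p=3$, and this is where the essential idea lies. If $p=3$, then $\partial_\Gamma(y,x)=2$, so there is a vertex $w$ with $y\to w\to x$. As $x\to y$ is an arc, $x$ and $y$ are adjacent in $\Sigma$, and $a_1=q-2$ together with semicompleteness forces $\Sigma_1(x)\cap\Sigma_1(y)=V(\Gamma_i(\alpha))\setminus\{x,y\}$; since $w$ is a common neighbour of $x$ and $y$ in $\Sigma$, this puts $w\in V(\Gamma_i(\alpha))$. Thus $x\to y\to w\to x$ is a $3$-circuit all of whose vertices lie in $V(\Gamma_i(\alpha))$, so Proposition \ref{degree-main} applies along each of its arcs. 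Going around the circuit gives $d^+_{\Gamma_i(\alpha)}(x)\geq d^+_{\Gamma_i(\alpha)}(y)\geq d^+_{\Gamma_i(\alpha)}(w)\geq d^+_{\Gamma_i(\alpha)}(x)$, forcing equality throughout and in particular $d^+_{\Gamma_i(\alpha)}(x)=d^+_{\Gamma_i(\alpha)}(y)$, contradicting the hypothesis. Hence $p=4$, which gives $(x,y)\in\Gamma_{1,3}$ and $p^{(1,3)}_{(2,2),(3,1)}\neq0$. The only delicate points are verifying that $w$ genuinely lands in $V(\Gamma_i(\alpha))$ (so that Proposition \ref{degree-main} may be invoked along the whole circuit) and the monotonicity of $d^+_{\Gamma_i(\alpha)}$ along arcs; once these are in place the circuit argument closes immediately, while the remaining steps are bookkeeping with Lemmas \ref{intersection numners} and \ref{s=t}.
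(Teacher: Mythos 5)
Your proof is correct and follows essentially the same route as the paper's: Proposition \ref{degree-main} pins down $(x,y)\in\Gamma_{1,p-1}$ with $p>2$ and $p^{(1,p-1)}_{(2,2),(s-1,1)}\neq0$, Lemma \ref{intersection numners} (ii) together with Lemma \ref{s=t} gives $s=p\in\{3,4\}$, and a $3$-circuit inside $V(\Gamma_i(\alpha))$ combined with the monotonicity of $d^{+}_{\Gamma_i(\alpha)}$ along arcs excludes $p=3$. The only cosmetic difference is that you re-derive $w\in V(\Gamma_i(\alpha))$ directly from $a_1=q-2$ and semicompleteness, where the paper cites Lemma \ref{jb4} for the same counting fact.
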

\begin{proof}
Note that $\Gamma_i(\alpha)$ is semicomplete. Since  $d_{\Gamma_i(\alpha)}^+(x)>d_{\Gamma_i(\alpha)}^+(y)$, by Proposition \ref{degree-main}, one obtains $(x,y)\in\Gamma_{1,p-1}$ for $p>2$ and $p_{(2,2),(s-1,1)}^{(1,p-1)}\neq0$ for some $(1,s-1)\in\wz{\partial}(\Gamma)$. Lemma \ref{intersection numners} (ii) implies that $p_{(1,p-1),(1,s-1)}^{(2,2)}\neq0$. In view of Lemma \ref{s=t}, we obtain $s=p$ and $p\in\{3,4\}$.

Suppose $p=3$. By Lemma \ref{jb4}, there exists a path $(y,z,x)$ in $\Gamma_i(\alpha)$ with $(y,z),(z,x)\in\Gamma_{1,1}\cup\Gamma_{1,2}$. In view of Proposition \ref{degree-main}, one gets $$d^{+}_{\Gamma_i(\alpha)}(x)>d^{+}_{\Gamma_i(\alpha)}(y)\geq d^{+}_{\Gamma_i(\alpha)}(z)\geq d^{+}_{\Gamma_i(\alpha)}(x),$$ a contradiction. Hence, $p=4$, and so $p_{(2,2),(3,1)}^{(1,3)}\neq0$.
\end{proof}

\begin{prop}\label{degree*}
Let $q=2$. If $\Sigma$ is distance-transitive, then $p_{(2,2),(3,1)}^{(1,3)}\neq0$ and each arc of $\Gamma$ is of type $(1,1)$ or $(1,3)$. In particular, $k_{1,1}=0$ or $1$.
\end{prop}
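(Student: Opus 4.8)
The plan is to first pin down the possible types of arcs and then to force $k_{1,1}\le 1$ by playing the two kinds of $(2,2)$-configurations against the ``exactly one case'' dichotomy of Lemma \ref{s=t}. To begin, I would classify the arcs. Since $q=2$, every subdigraph $\Gamma_i(\alpha)$ (for $1\le i\le m$ and $\alpha\in T$) has exactly two vertices and carries a single edge of $\Sigma$; the hypothesis that $\Sigma$ is distance-transitive is what lets us assume every arc sits inside some such fibre (equivalently, that the standing Hamming hypothesis on $\Gamma^{[m]}$ reaches every edge), so that it suffices to examine an arbitrary arc through its $\Gamma_i(\alpha)$. If an edge $\{u,v\}$ is a digon, its two arcs have type $(1,1)$ by definition. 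If it is a single arc $u\to v$, then $d^{+}_{\Gamma_i(\alpha)}(u)=1>0=d^{+}_{\Gamma_i(\alpha)}(v)$, so Lemma \ref{p=4} yields $(u,v)\in\Gamma_{1,3}$ together with $p^{(1,3)}_{(2,2),(3,1)}\neq 0$. As $\Gamma$ is a genuinely directed weakly distance-regular digraph (its attached scheme being non-symmetric), at least one edge of $\Sigma$ must be a single arc; hence $p^{(1,3)}_{(2,2),(3,1)}\neq 0$ and every arc of $\Gamma$ has type $(1,1)$ or $(1,3)$.

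Next I would establish $p^{(2,2)}_{(1,1),(1,1)}=0$. From $p^{(1,3)}_{(2,2),(3,1)}\neq 0$ and Lemma \ref{intersection numners}(ii) (using $(3,1)^{*}=(1,3)$, so that $p^{(1,3)}_{(2,2),(3,1)}k_{(1,3)}=p^{(2,2)}_{(1,3),(1,3)}k_{(2,2)}$) one gets $p^{(2,2)}_{(1,3),(1,3)}\neq 0$. Feeding $p=s=4$ into Lemma \ref{s=t} places us in its case (ii), so by the dichotomy ``exactly one of (i),(ii) holds'' case (i) is forbidden. Were $p^{(2,2)}_{(1,1),(1,1)}\neq 0$, then feeding $p=s=2$ into Lemma \ref{s=t} would instead force case (i), a contradiction. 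Hence $p^{(2,2)}_{(1,1),(1,1)}=0$.

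Finally I would rule out $k_{1,1}\ge 2$. Suppose it holds, so every vertex has at least two digon-neighbours. Choose a vertex $x$, a digon-neighbour $y$ of $x$, and a digon-neighbour $z\neq x$ of $y$. Since $a_1=q-2=0$, the graph $\Sigma$ is triangle-free, so $x\not\sim z$ and $\partial_\Sigma(x,z)=2$; both $x\to y\to z$ and $z\to y\to x$ are length-$2$ digon paths, whence $(x,z)\in\Gamma_{2,2}$ and $y\in P_{(1,1),(1,1)}(x,z)$. This gives $p^{(2,2)}_{(1,1),(1,1)}\neq 0$, contradicting the previous step. Therefore $k_{1,1}\le 1$, i.e. $k_{1,1}=0$ or $1$.

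The main obstacle is the careful bookkeeping inside Lemma \ref{s=t}: one must read off correctly that a single arc (via $p^{(2,2)}_{(1,3),(1,3)}\neq 0$) forces the ``$p=4$'' case and thereby annihilates the ``$p=2$'' case, and one must be sure the digon--digon $2$-path genuinely lands in $\Gamma_{2,2}$ rather than short-cutting through a common neighbour, which is exactly where triangle-freeness $a_1=0$ is used. A secondary delicate point is the coverage claim in the first step, namely that distance-transitivity of $\Sigma$ lets every arc be analysed inside some fibre $\Gamma_i(\alpha)$ so that Lemma \ref{p=4} applies uniformly and produces the crucial type-$(1,3)$ arc.
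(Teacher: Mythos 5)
Your proposal is correct and matches the paper's proof in all essentials: the paper likewise uses distance-transitivity to place an arbitrary non-digon arc inside a two-vertex fibre $\Gamma_i(\alpha)$, applies Lemma \ref{p=4} to the out-degree gap $1>0$ to obtain type $(1,3)$ and $p^{(1,3)}_{(2,2),(3,1)}\neq0$, and then derives $p^{(2,2)}_{(1,1),(1,1)}=0$ (via Lemma \ref{pneqt} together with Lemma \ref{intersection numners}(ii) --- which is exactly the $c_2=2$ cross-case exclusivity you rederive from Lemma \ref{s=t}). The only cosmetic differences are at the end, where the paper deduces $k_{1,1}\leq1$ from the valency identity $k_{1,1}^2=\sum_{\tilde{l}}p^{\tilde{l}}_{(1,1),(1,1)}k_{\tilde{l}}$ with $a_1=0$ while you unpack the same fact through the explicit two-digon path $x\leftrightarrow y\leftrightarrow z$ at $\Sigma$-distance $2$, and at the start, where your appeal to non-symmetry of the attached scheme to guarantee a non-digon arc makes explicit a point the paper leaves implicit.
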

\begin{proof}
Let $(x,y)\in\Gamma_{1,p-1}$ with $p>2$. Then $(x,y)\in\Sigma_1$.  Since $\Sigma$ is distance-transitive, we may assume $x,y\in V(\Gamma_i(\alpha))$ for some $i\in\{1,2,\ldots,m\}$ and $\alpha\in T$.
The fact that $q=2$ implies $a_1=0$ and $d^+_{\Gamma_i(\alpha)}(x)>d^+_{\Gamma_i(\alpha)}(y)$. By Lemma \ref{p=4}, one gets $p=4$ and $p_{(2,2),(3,1)}^{(1,3)}\neq0$. Thus, each arc of $\Gamma$ is of type $(1,1)$ or $(1,3)$. 
%
The first statement is valid.

Now suppose $(1,1)\in\wz{\partial}(\Gamma)$. Since $p^{(1,3)}_{(2,2),(3,1)}\neq0$, by Lemma \ref{pneqt}, we have $p_{(1,1),(2,2)}^{(1,1)}=0$, and so $p_{(1,1),(1,1)}^{(2,2)}=0$ from Lemma \ref{intersection numners} (ii). Note that $a_1=0$. In view of  Lemma \ref{intersection numners} (i), we obtain $k_{1,1}^2=p^{(0,0)}_{(1,1),(1,1)}k_{1,1}$. Thus, $k_{1,1}=1$. The second statement is also valid.
\end{proof}

%

\section{Proof of Theorem \ref{main}}


To prove Theorem \ref{main}, we need some auxiliary results.


\begin{prop}\label{sufficient}
Let $\Delta$ and $\Delta'$ be two semicomplete weakly distance-regular digraphs of girth $2$ with the same intersection numbers. Then $\Delta\square\Delta'$ is weakly distance-regular.
\end{prop}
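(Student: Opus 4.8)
The plan is to exploit the additivity of distance in a Cartesian product and then reduce the weakly distance-regular condition to a bookkeeping statement about decompositions of two-way distances. First I would record the fundamental distance identity: a directed path in $\Delta\,\square\,\Delta'$ from $(u,v)$ to $(u',v')$ projects coordinatewise to a directed path from $u$ to $u'$ in $\Delta$ and one from $v$ to $v'$ in $\Delta'$, and its length is the sum of the two projected lengths; conversely any pair of directed paths in the factors can be interleaved into one in the product. Minimizing gives
$$\partial_{\Delta\,\square\,\Delta'}((u,v),(u',v'))=\partial_{\Delta}(u,u')+\partial_{\Delta'}(v,v'),$$
and hence $\wz{\partial}_{\Delta\,\square\,\Delta'}((u,v),(u',v'))=\wz{\partial}_{\Delta}(u,u')+\wz{\partial}_{\Delta'}(v,v')$, where pairs are added componentwise. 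In particular $\Delta\,\square\,\Delta'$ is strongly connected (route each coordinate in turn), and its two-way distance set is the sumset $\wz{\partial}(\Delta)+\wz{\partial}(\Delta')$. Since $\Delta$ and $\Delta'$ are semicomplete of girth $2$, Proposition \ref{complete} together with the fact that girth $2$ forces $(1,1)\in\wz{\partial}$ gives $\wz{\partial}(\Delta)=\wz{\partial}(\Delta')=\{(0,0),(1,1),(1,2),(2,1)\}$.

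Next I would fix $x=(x_1,x_2)$, $y=(y_1,y_2)$ and set $\wz{h}_1=\wz{\partial}_{\Delta}(x_1,y_1)$, $\wz{h}_2=\wz{\partial}_{\Delta'}(x_2,y_2)$, so that $\wz{\partial}_{\Delta\,\square\,\Delta'}(x,y)=\wz{h}_1+\wz{h}_2=:\wz{h}$. For a candidate intermediate vertex $z=(z_1,z_2)$, the additivity identity splits both $\wz{\partial}(x,z)=\wz{i}$ and $\wz{\partial}(z,y)=\wz{j}$ into their coordinate contributions, and $z_1,z_2$ may be chosen independently. Using the weakly distance-regularity of each factor, the number of such $z$ is
$$N(\wz{h}_1,\wz{h}_2;\wz{i},\wz{j}):=\sum_{\substack{\wz{i}_1+\wz{i}_2=\wz{i}\\ \wz{j}_1+\wz{j}_2=\wz{j}}} p^{\wz{h}_1}_{\wz{i}_1,\wz{j}_1}(\Delta)\,p^{\wz{h}_2}_{\wz{i}_2,\wz{j}_2}(\Delta'),$$
with the convention that an intersection number indexed outside a factor's two-way distance set vanishes. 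Crucially, this count depends on $x,y$ only through the ordered pair $(\wz{h}_1,\wz{h}_2)$.

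Finally, the main point is to show that $N(\wz{h}_1,\wz{h}_2;\wz{i},\wz{j})$ depends only on $\wz{h}=\wz{h}_1+\wz{h}_2$. I would verify by direct inspection that the ten pairwise sums of elements of $\{(0,0),(1,1),(1,2),(2,1)\}$ are pairwise distinct, so that each $\wz{h}\in\wz{\partial}(\Delta\,\square\,\Delta')$ admits a unique decomposition as a multiset $\{\wz{h}_1,\wz{h}_2\}$; the only remaining ambiguity is the order in which the two factors realize the two parts. That ambiguity is exactly what the hypothesis $p^{\wz{h}}_{\wz{i},\wz{j}}(\Delta)=p^{\wz{h}}_{\wz{i},\wz{j}}(\Delta')$ absorbs: relabelling the summation indices $(\wz{i}_1,\wz{j}_1)\leftrightarrow(\wz{i}_2,\wz{j}_2)$ and then interchanging the two equal intersection-number systems shows $N(\wz{h}_1,\wz{h}_2;\wz{i},\wz{j})=N(\wz{h}_2,\wz{h}_1;\wz{i},\wz{j})$. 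Hence $N$ descends to a function of $\wz{h},\wz{i},\wz{j}$ alone; these are the intersection numbers of $\Delta\,\square\,\Delta'$, and the digraph is weakly distance-regular. I expect the delicate step to be this last one---establishing simultaneously the multiset-uniqueness of the decompositions and the swap-symmetry of $N$---since it is precisely here that the equality of the intersection numbers of the two factors is indispensable; without it $N$ would genuinely differ between the two orderings and the product need not be weakly distance-regular.
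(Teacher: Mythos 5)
Your proof is correct and takes essentially the same approach as the paper's: both rest on the additivity of two-way distance in the Cartesian product, the identification $\wz{\partial}(\Delta)=\wz{\partial}(\Delta')=\{(0,0),(1,1),(1,2),(2,1)\}$ via Proposition \ref{complete}, the finite check that each $\wz{h}$ in the sumset has a unique unordered decomposition (the paper's sets $D_{\wz{h}}$), and the equality of the two factors' intersection numbers to absorb the ordering ambiguity. Your explicit relabelling argument for $N(\wz{h}_1,\wz{h}_2;\wz{i},\wz{j})=N(\wz{h}_2,\wz{h}_1;\wz{i},\wz{j})$ merely spells out what the paper's union over $D_{\wz{i}}\times D_{\wz{j}}$ encodes implicitly.
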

\begin{proof}
Let $\Lambda=\Delta\square\Delta'$. Then
\begin{align}
\wz{\partial}_{\Lambda}((x,x'),(y,y'))=\wz{\partial}_{\Delta}(x,y)+\wz{\partial}_{\Delta'}(x',y').\label{distance}
\end{align}
Note that $\Delta$ and $\Delta'$ are two semicomplete weakly distance-regular digraphs of girth $2$ and diameter $2$. Then $\wz{\partial}(\Delta)=\wz{\partial}(\Delta')=\{(0,0),(1,2),(2,1),(1,1)\}$. According to \eqref{distance}, one gets $\wz{\partial}(\Lambda)=\{\wz{i}+\wz{j}\mid \wz{i},\wz{j}\in\wz{\partial}(\Delta)\}$. 

Set $D_{\wz{a}+\wz{b}}=\{(\wz{a},\wz{b}),(\wz{b},\wz{a})\}$ for all $\wz{a},\wz{b}\in\{(0,0),(1,1),(1,2),(2,1)\}$. For each $\wz{h}\in\wz{\partial}(\Lambda)$, there exists exactly one set $D_{\wz{a}+\wz{b}}$  such that $\wz{a}+\wz{b}=\wz{h}$. So we write $D_{\wz{h}}$ instead of $D_{\wz{a}+\wz{b}}$. Fix $\wz{h}\in\wz{\partial}(\Lambda)$. Let $((u,u'),(v,v'))\in\Lambda_{\wz{h}}$.
Note that $(\wz{\partial}_{\Delta}(u,v),\wz{\partial}_{\Delta'}(u',v'))\in D_{\wz{h}}$.  By \eqref{distance}, we have
\begin{align}
\Lambda_{\wz{i}}(u,u')\cap\Lambda_{\wz{j}^*}(v,v')=\bigcup_{(\wz{a},\wz{b})\in D_{\wz{i}},(\wz{c},\wz{d})\in D_{\wz{j}}}(\Delta_{\wz{a}}(u)\cap\Delta_{\wz{c}^*}(v))\times(\Delta_{\wz{b}}'(u')\cap\Delta_{\wz{d}^*}'(v'))\nonumber
\end{align}
for all $\wz{i},\wz{j}\in\wz{\partial}(\Lambda)$. Since $\Delta$ and $\Delta'$ are weakly distance-regular digraphs with the same intersection numbers, the cardinality of the set $\Lambda_{\wz{i}}(u,u')\cap\Lambda_{\wz{j}^*}(v,v')$ does not depend on the choice of $(u,u'),(v,v')$ but only on $\wz{i},\wz{j},\wz{h}\in\wz{\partial}(\Lambda)$. Since $\wz{h}$ was arbitrary, we  complete the proof.
\end{proof}

In the remainder of this section, let $\Sigma$ be the Hamming graph $H(d,q)$ on the set $S$ of $q$ elements with $d>0$ and $q>1$.

\begin{lemma}\label{match}
Let $1\leq i\leq d$ and $\alpha,\beta\in S^{d-1}$ with $\alpha\neq\beta$. Suppose that $x,y\in V(\Gamma_i(\alpha))$ and $z\in V(\Gamma_i(\beta))$ such that $x\neq y$ and $(y,z)\in\Sigma_1$. If $y'\in\Sigma_1(x)\cap\Sigma_1(z)$ with $y'\neq y$, then $y'\in V(\Gamma_i(\beta))$.
\end{lemma}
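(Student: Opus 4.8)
The plan is to argue entirely by coordinates, exploiting the product structure of $H(d,q)$. Recall that the $q$ vertices of $\Gamma_i(\alpha)$ are precisely those sharing a common value in every coordinate except the $i$-th; I will call this common pattern off position $i$ the \emph{profile} $\alpha$, and likewise $\beta$ for $\Gamma_i(\beta)$, so that the hypothesis $\alpha\neq\beta$ says these two profiles differ. Since $x,y\in V(\Gamma_i(\alpha))$ and $x\neq y$, the vertices $x$ and $y$ agree in all coordinates except the $i$-th, where they differ.

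First I would pin down how $y$ and $z$ sit relative to one another. Because $(y,z)\in\Sigma_1$, they differ in exactly one coordinate. If that coordinate were the $i$-th, then $y$ and $z$ would share the same profile off position $i$, forcing $\alpha=\beta$, contrary to hypothesis. Hence $y$ and $z$ differ in a single coordinate $j\neq i$; in particular they carry the same value at position $i$, and $\alpha,\beta$ agree in every coordinate except $j$ (where they differ).

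Next I would locate $z$ relative to $x$. Combining the two previous observations, $x$ and $z$ differ in precisely the two coordinates $i$ and $j$ (at position $i$ because $x_i\neq y_i=z_i$, at position $j$ because $x_j=y_j\neq z_j$, and nowhere else), so $\partial_\Sigma(x,z)=2$. As $c_2=2$ for $H(d,q)$ by Theorem \ref{hamming}, the set $\Sigma_1(x)\cap\Sigma_1(z)$ consists of exactly two vertices, namely the two remaining corners of the square spanned by coordinates $i$ and $j$: the vertex obtained from $x$ by resetting its $i$-th coordinate to $z_i$, and the vertex obtained from $x$ by resetting its $j$-th coordinate to $z_j$. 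The former agrees with $x$ off position $i$ and has $i$-th coordinate $z_i=y_i$, hence equals $y$; the latter keeps the profile $\alpha$ in all coordinates except $i$ and $j$ and acquires the value $z_j$ at position $j$, which is exactly the value prescribed by $\beta$. Since $\alpha$ and $\beta$ agree off coordinate $j$, this second vertex therefore carries the profile $\beta$ in every coordinate except $i$, so it lies in $V(\Gamma_i(\beta))$. As $y'\neq y$, the vertex $y'$ must be this second common neighbour, which completes the proof.

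I expect no serious obstacle here; the argument is a direct computation once the square structure is recognised. The only care needed lies in the second paragraph—ruling out the possibility that $y$ and $z$ differ in coordinate $i$ (which is exactly where the hypothesis $\alpha\neq\beta$ is used) and then correctly matching the two explicit common neighbours of $x$ and $z$ against $y$ and against the target line $\Gamma_i(\beta)$.
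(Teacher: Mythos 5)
Your proof is correct and follows essentially the same route as the paper: both arguments are direct coordinate computations in $H(d,q)$ that first deduce $y$ and $z$ differ in a single coordinate $j\neq i$ (using $\alpha\neq\beta$), conclude $(x,z)\in\Sigma_2$, and then identify the two common neighbours of $x$ and $z$ as $y$ and the opposite corner of the square, which lies in $V(\Gamma_i(\beta))$. The only cosmetic difference is that the paper normalises $i=d$ and writes vertices as pairs $(\alpha,a)$, while you keep $i$ general via your ``profile'' bookkeeping.
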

\begin{proof}
Without loss of generality, we may assume $i=d$ and $x=(\alpha,a)$ for some $a\in S$. The fact that $y\in V(\Gamma_d(\alpha))$ with $y\neq x$ implies that there exists $b\in S\setminus\{a\}$ such that $y=(\alpha,b)$. Since $z\in\Gamma_1(y)\cap V(\Gamma_d(\beta))$ with $\alpha\neq\beta$, one gets $z=(\beta,b)$, and $\alpha,$ $\beta$ agree in all but one coordinate. Since $\Sigma$ is a Hamming graph, we obtain $(x,z)\in\Sigma_2$ and $\Sigma_1(x)\cap\Sigma_1(z)=\{y,(\beta,a)\}$ from Theorem \ref{hamming}. It follows that $y'=(\beta,a)$, and so $y'\in V(\Gamma_d(\beta))$.
\end{proof}

\begin{lemma}\label{(2,2)}
Let $1\leq i\leq d$ and $\alpha,\beta\in S^{d-1}$ with $\alpha\neq\beta$. Suppose that $x,y\in V(\Gamma_i(\alpha))$ and $x',y'\in V(\Gamma_i(\beta))$ with $x\neq y$ and $x'\neq y'$. If $(x,x'),(y',y)\in\Gamma_{1,p-1}$ for $p>2$, then $(x',y'),(y,x)\in\Gamma_{1,p-1}$.
\end{lemma}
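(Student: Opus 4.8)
The plan is to first pin down the geometry, then orient the four edges of the resulting square, and finally read off the return distances from Lemma \ref{s=t}.

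First I would normalise coordinates. Since the hypotheses are invariant under permuting the coordinates of $H(d,q)$, I may assume $i=d$ and write $x=(\alpha,s)$, $y=(\alpha,t)$ with $s\neq t$. Because $(x,x'),(y',y)\in\Gamma_{1,p-1}$ force $x\sim_{\Sigma}x'$ and $y\sim_{\Sigma}y'$ while $\alpha\neq\beta$, Theorem \ref{hamming} leaves a single possibility: $\alpha$ and $\beta$ differ in exactly one coordinate and $x'=(\beta,s)$, $y'=(\beta,t)$. Thus $x,x',y',y$ form a $4$-cycle of $\Sigma$ whose diagonals $\{x,y'\}$ and $\{x',y\}$ lie in $\Sigma_2$, and by $c_2=2$ one has $\Sigma_1(x)\cap\Sigma_1(y')=\{x',y\}$ and $\Sigma_1(x')\cap\Sigma_1(y)=\{x,y'\}$.

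Next I would orient the two ``line'' edges. Suppose for contradiction that $x\to y$, say $(x,y)\in\Gamma_{1,k}$. Applying Lemma \ref{degree-1} to the line $\Gamma_d(\alpha)$ with $z=x'$ (using $(x,x')\in\Gamma_{1,p-1}$) produces a unique common neighbour $w\in\Sigma_1(y)\cap\Sigma_1(x')$ lying outside the line with $\wz{\partial}_{\Gamma}(y,w)=(1,p-1)$; since the only such off-line common neighbour is $y'$, this forces $\wz{\partial}_{\Gamma}(y,y')=(1,p-1)$, contradicting the hypothesis $\wz{\partial}_{\Gamma}(y,y')=(p-1,1)$ because $p>2$. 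Hence $y\to x$. Interchanging the roles of the two lines, i.e. the relabelling $\alpha\leftrightarrow\beta$, $x\leftrightarrow y'$, $x'\leftrightarrow y$, preserves every hypothesis, so the identical argument yields $x'\to y'$. We now have a directed circuit $x\to x'\to y'\to y\to x$.

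Finally I would compute the return distances. The length-$2$ directed paths $x\to x'\to y'$ and $y'\to y\to x$ together with $\partial_{\Sigma}(x,y')=2$ give $(x,y')\in\Gamma_{2,2}$, and likewise $y\to x\to x'$ and $x'\to y'\to y$ give $(y,x')\in\Gamma_{2,2}$. Writing $(y,x)\in\Gamma_{1,r-1}$, the common neighbour $x$ of the pair $(y,x')$ satisfies $\wz{\partial}_{\Gamma}(y,x)=(1,r-1)$ and $\wz{\partial}_{\Gamma}(x,x')=(1,p-1)$, whence $x\in P_{(1,r-1),(1,p-1)}(y,x')$ and $p^{(2,2)}_{(1,r-1),(1,p-1)}\neq0$. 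Lemma \ref{s=t} then forces $r=p$, i.e. $(y,x)\in\Gamma_{1,p-1}$; the same computation for the pair $(x,y')$ with common neighbour $x'$ gives $(x',y')\in\Gamma_{1,p-1}$. The main obstacle will be the careful bookkeeping of the two-way distances and the $\wz{j}^{*}$ indices when invoking Lemmas \ref{degree-1} and \ref{s=t}, and the justification that the $\alpha\leftrightarrow\beta$ relabelling is a genuine symmetry of the hypotheses, so that the conclusion ``$y\to x$'' really transports to ``$x'\to y'$''.
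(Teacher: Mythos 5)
Your proof is correct, and it genuinely deviates from the paper's in its second half, so let me compare. The first half coincides with the paper's argument: both orient the edge $\{x,y\}$ by assuming $x\to y$, feeding the arcs $(x,y)$ and $(x,x')$ into Lemma \ref{degree-1}, and noting that the unique off-line common neighbour of $y$ and $x'$ must be $y'$ with $\wz{\partial}_{\Gamma}(y,y')=(1,p-1)$, contradicting $(y',y)\in\Gamma_{1,p-1}$ since $p>2$; your coordinate normalisation via Theorem \ref{hamming} is just a concrete substitute for the paper's use of Lemma \ref{jb4} together with $c_2=2$ to get $\Sigma_1(x')\cap\Sigma_1(y)=\{x,y'\}$. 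After that the paper never orients $\{x',y'\}$ separately: from $(y,x)\in\Gamma_{1,s-1}$ with $s>2$ it applies Lemma \ref{degree-2}(i) to force $(y,x')\in\Gamma_{2,2}$, and then Lemma \ref{degree-3} delivers $s=p$ together with $y'\in P_{(1,p-1),(1,p-1)}(x',x)$, i.e.\ the orientation and the type of $(x',y')$ in one stroke. You instead exploit the symmetry $\alpha\leftrightarrow\beta$, $x\leftrightarrow y'$, $x'\leftrightarrow y$ — which does preserve the hypotheses, since $(x,x')$ and $(y',y)$ lie in the same relation $\Gamma_{1,p-1}$, and in Section 4 one has $m=d$, $T=S^{d-1}$, so the Section 3 lemmas apply equally to the line $\Gamma_i(\beta)$ — to transport the orientation argument and obtain $x'\to y'$; the resulting directed $4$-circuit, combined with the standard fact that $\Gamma$-distance is bounded below by $\Sigma$-distance, places both diagonals $(x,y')$ and $(y,x')$ in $\Gamma_{2,2}$, after which two direct applications of Lemma \ref{s=t} pin both types to $(1,p-1)$. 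The two routes run on the same engine ($c_2=2$ uniqueness of common neighbours plus commutativity via Lemma \ref{s=t}, which is also the core of Lemma \ref{degree-3}): yours buys a clean symmetry and avoids Lemmas \ref{degree-2} and \ref{degree-3} altogether, at the modest cost of the relabelling bookkeeping you yourself flagged, while the paper's version is shorter once those two lemmas are in hand.
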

%
\begin{proof}
By Theorem \ref{hamming}, we get $c_2=2$ and $a_1=q-2$. Note that $\Gamma_i(\alpha)$ and $\Gamma_i(\beta)$ are semicomplete. Since $x\neq y$ and $x'\neq y'$, one gets $(x,y)$, $(x',y')\in\Sigma_1$.
In view of Lemma \ref{jb4}, we have $(x',y)\in\Sigma_2$, and so $\Sigma_1(x')\cap\Sigma_1(y)=\{x,y'\}$ since $c_2=2$.
If $(x,y)\in\Gamma_{1,s-1}$ for some $s\geq2$,
by Lemma \ref{degree-1}, then $y'\in P_{(1,p-1),(s-1,1)}(y,x')$ since $(x,x')\in\Gamma_{1,p-1}$, contrary to the fact that $p>2$. Thus,   $(y,x)\in\Gamma_{1,s-1}$ for some $s>2$. Since $x\in P_{(1,s-1),(1,p-1)}(y,x')$ and $(y',y)\in\Gamma_{1,p-1}$, from Lemma \ref{degree-2} (i), we obtain $(y,x')\in\Gamma_{2,2}$. By Lemma \ref{degree-3}, one has  $s=p$ and $(x',y')\in\Gamma_{1,p-1}$.
\end{proof}

The above two lemmas summarize the relations between the vertices in $V(\Gamma_i(\alpha))$ and the vertices in $V(\Gamma_i(\beta))$ with $\alpha\neq\beta$.

\begin{lemma}\label{degree}
Let  $1\leq i\leq d$ and $\alpha\in S^{d-1}$. If $q>2$,  then $d^{+}_{\Gamma_i(\alpha)}(x)=d^+_{\Gamma_i(\alpha)}(y)$ and $d^{-}_{\Gamma_i(\alpha)}(x)=d^-_{\Gamma_i(\alpha)}(y)$ for all $x,y\in V(\Gamma_i(\alpha))$.
\end{lemma}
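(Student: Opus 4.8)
The plan is to argue by contradiction and to reduce the statement to the absence of a single forbidden arc type inside a line. By Theorem \ref{hamming} the Hamming graph $H(d,q)$ has $a_1=q-2$ and $c_2=2$, so $q>2$ forces $a_1\geq 1$; consequently each $\Gamma_i(\alpha)$ is a semicomplete digraph on $q\geq 3$ vertices, and for any two of its vertices the common $\Sigma$-neighbours are exactly the remaining $q-2$ vertices of that line. (If $d=1$ then $\Sigma=K_q$ and $\Gamma$ is semicomplete of diameter $2$ by Proposition \ref{complete}, so no arc of type $(1,3)$ occurs and the claim is immediate; thus I assume $d\geq 2$.) Since every pair of vertices of $\Gamma_i(\alpha)$ is adjacent in $\Sigma$, Proposition \ref{degree-main} puts each such pair either in case (i) (equal out-degrees and equal in-degrees) or in case (ii) (both strictly split); hence the lemma is equivalent to the assertion that no pair is in case (ii). By Lemma \ref{p=4} a case-(ii) pair yields an arc $(x,y)\in\Gamma_{1,3}$ inside $\Gamma_i(\alpha)$ with $p^{(1,3)}_{(2,2),(3,1)}\neq 0$, so it suffices to prove that, when $q>2$, no line carries an arc of type $(1,3)$.

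Second, suppose for contradiction that $(x,y)\in\Gamma_{1,3}$ lies in $L:=\Gamma_i(\alpha)$, and recall $p^{(1,3)}_{(2,2),(3,1)}\neq 0$. Choosing $z\in P_{(2,2),(3,1)}(x,y)$, the definition of this set (together with $(3,1)^{*}=(1,3)$) gives at once $(x,z)\in\Gamma_{2,2}$ and $(y,z)\in\Gamma_{1,3}$, so the type-$(1,3)$ pattern turns a corner. Reading Hamming distances (Theorem \ref{hamming}), $\partial_\Sigma(y,z)=1$ and $\partial_\Sigma(x,z)=2$ force $z$ to differ from $y$ in a single coordinate $j\neq i$; thus $z$ lies in the direction-$j$ line through $y$ and in the direction-$i$ line $L'$ obtained from $L$ by changing coordinate $j$. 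Lemma \ref{match} identifies the fourth corner $w$ of the resulting $\{i,j\}$-square via $\Sigma_1(x)\cap\Sigma_1(z)=\{y,w\}$, and Lemma \ref{s=t}(ii) (equivalently Lemma \ref{degree-3}) gives $w\in P_{(1,3),(1,3)}(z,x)$. Hence the four vertices carry a directed type-$(1,3)$ $4$-cycle $x\to y\to z\to w\to x$, in which $x,w$ share one value of coordinate $i$ and $y,z$ share the other.

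Third, I read off the crucial reversal. By Proposition \ref{degree-main} a type-$(1,3)$ arc inside a line runs from the vertex of larger within-line out-degree to that of smaller out-degree. Applying this to $x\to y$ in $L$ and to $z\to w$ in $L'$ shows that the value of coordinate $i$ carried by $x$ and $w$ dominates the value carried by $y$ and $z$ in $L$, but is dominated by it in $L'$. In other words the induced out-degree order on this pair of coordinate values is reversed between the two parallel direction-$i$ lines $L$ and $L'$, and Lemma \ref{(2,2)} is the tool that propagates such reversals across further parallel lines.

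Finally, the hypothesis $q>2$ must enter essentially. A single square is internally consistent --- indeed for $q=2$ it is the whole digraph ${\rm Cay}(\mathbb{Z}_4,\{1\})$ of Theorem \ref{main}(i), where no contradiction arises --- so the argument must use the extra vertices and extra parallel lines furnished by $q\geq 3$. The plan is to iterate the corner-turning construction from each newly produced type-$(1,3)$ arc, linking further parallel direction-$i$ lines by squares and, via Lemma \ref{(2,2)}, forcing linked lines to carry opposite orders of the relevant coordinate values; since $q\geq 3$ supplies a third parallel line and a third coordinate value, one then assembles these pairwise reversals into a closed chain of odd length and concludes that some direction-$i$ line must order a fixed pair of values in both ways --- equivalently, a vertex must have simultaneously strictly larger and strictly smaller out-degree than another, which is absurd. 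Producing this odd, non-bipartite chain of reversals --- ruling out that the reversals merely amount to a harmless global flip between two lines --- is the main obstacle, and is exactly the point at which $q>2$ cannot be dropped.
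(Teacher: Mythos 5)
Your first two steps are sound and in fact coincide with the paper's opening moves: the reduction via Proposition \ref{degree-main} and Lemma \ref{p=4} to a type-$(1,3)$ arc $(x,y)$ in a line with $p^{(1,3)}_{(2,2),(3,1)}\neq0$, and the square construction (your $z\in P_{(2,2),(3,1)}(x,y)$ and fourth corner $w\in P_{(1,3),(1,3)}(z,x)$, placed in the parallel line by Lemma \ref{match}) are exactly the paper's $y'$ and $x'$. But the proof stops there. Your fourth step is an unexecuted plan, and you say so yourself: you never construct the ``closed chain of odd length'' of reversals, never show such a chain would have to be odd rather than collapsing to a harmless bipartite flip, and never explain what ``odd'' even means when each reversal concerns a different pair of coordinate-$i$ values and a different pair of parallel lines. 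As it stands there is no contradiction derived, so the lemma is not proved.

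The gap is not merely one of detail; the paper's actual use of $q>2$ is structurally different from what you propose, and your sketch misses the ingredient that makes the argument close. The paper first shows that not every pair in the line can be of type $(1,3)$ or $(3,1)$: if it were, then $p^{(1,3)}_{(3,1),(1,3)}\neq0$, and picking $z\in P_{(3,1),(1,3)}(x,y)$ (this is where $q>2$ enters, supplying a third vertex \emph{in the same line}, not a third parallel line) yields via Lemma \ref{jb4} the count $|P_{(3,1),(1,3)}(x,y)|=d^-_{\Gamma_i(\alpha)}(x)\geq 1+|P_{(1,3),(1,3)}(z,x)|+|P_{(3,1),(1,3)}(z,x)|$, a contradiction. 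So there is a vertex $z$ in the line joined to $y$ by an arc of type $(1,t-1)$ with $t\neq4$; Lemma \ref{pneqt} then kills $p^{(1,t-1)}_{(2,2),(r-1,1)}$ for all $r$, Proposition \ref{degree-main}(i) forces $d^+_{\Gamma_i(\alpha)}(y)=d^+_{\Gamma_i(\alpha)}(z)$, and Lemma \ref{p=4} upgrades this to $(x,z)\in\Gamma_{1,3}$. The endgame is then a local chase through Lemmas \ref{match}, \ref{(2,2)} and \ref{jb4}, producing vertices $z',w,w'$ in the two lines with $z',w$, and a further vertex $w''\in P_{(1,t-1),(1,3)}(z,w')$, all lying in $\Sigma_1(z)\cap\Sigma_1(w')$ --- three common neighbours, contradicting $c_2=2$. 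Note that the decisive configuration mixes a type-$t\neq4$ arc with the type-$(1,3)$ arcs, a case your sketch never engages with, and the final contradiction is a violation of $c_2=2$, not a parity obstruction on out-degree orderings. To salvage your approach you would have to prove your non-bipartiteness claim from scratch, and it is not clear this can be done without effectively redoing the paper's mixed-type analysis.
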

\begin{proof}
Suppose for the contrary that $d_{\Gamma_i(\alpha)}^+(x)\neq d_{\Gamma_i(\alpha)}^+(y)$ or $d_{\Gamma_i(\alpha)}^-(x)\neq d_{\Gamma_i(\alpha)}^-(y)$ for some $x,y\in V(\Gamma_i(\alpha))$. From Theorem \ref{hamming}, we have $a_1=q-2$ and $c_2=2$. Since $\Gamma_i(\alpha)$ is semicomplete, we may assume $(x,y)\in\Gamma_{1,p-1}$ with $p>1$. In view of Proposition \ref{degree-main}, one has $d^{+}_{\Gamma_i(\alpha)}(x)>d^{+}_{\Gamma_i(\alpha)}(y)$ and $d^{-}_{\Gamma_i(\alpha)}(x)<d^{-}_{\Gamma_i(\alpha)}(y)$. By Lemma \ref{p=4}, one gets $p=4$ and $p_{(2,2),(3,1)}^{(1,3)}\neq0$. It follows from Lemma \ref{intersection numners} (ii) and Lemma \ref{s=t} that $p_{(1,3),(1,3)}^{(2,2)}=1$.

Since $\Gamma_i(\alpha)$ is semicomplete and $p_{(2,2),(3,1)}^{(1,3)}\neq0$, there exists a vertex $y'\in P_{(2,2),(3,1)}(x,y)$ with $y'\notin V(\Gamma_i(\alpha))$. Let $y'\in V(\Gamma_i(\beta))$ for some $\beta\in S^{d-1}\setminus\{\alpha\}$. By Lemma \ref{degree-3}, there exists a unique vertex $x'\in P_{(1,3),(1,3)}(y',x)$. Lemma \ref{match} implies that $x'\in V(\Gamma_i(\beta))$.


Suppose $(x,z),(y,z)\in\Gamma_{1,3}\cup\Gamma_{3,1}$ for all $z\in V(\Gamma_i(\alpha))$. If $(y,z)\in\Gamma_{1,3}$, then $(x,z)\in\Gamma_{1,3}$, which implies $x\in P_{(3,1),(1,3)}(y,z)$; if $(z,y)\in\Gamma_{1,3}$, then $x\in P_{(3,1),(1,3)}(z,y)$ or $z\in P_{(3,1),(1,3)}(x,y)$. Thus, $p^{(1,3)}_{(3,1),(1,3)}\neq0$. Since $\Gamma_i(\alpha)$ is semicomplete and $q>2$, there exists $z\in P_{(3,1),(1,3)}(x,y)$. Lemma \ref{jb4} implies $$|P_{(3,1),(1,3)}(x,y)|=d^-_{\Gamma_i(\alpha)}(x)\geq1+|P_{(1,3),(1,3)}(z,x)|+|P_{(3,1),(1,3)}(z,x)|,$$ a contradiction. Therefore, $(x,z)\notin\Gamma_{1,3}\cup\Gamma_{3,1}$ or $(y,z)\notin\Gamma_{1,3}\cup\Gamma_{3,1}$ for some $z\in V(\Gamma_i(\alpha))$.


Since $\Gamma_i(\alpha)$ is semicomplete, by the commutativity of $\Gamma$, we may assume $(y,z)\in\Gamma_{1,t-1}$ or $(z,y)\in\Gamma_{1,t-1}$ for some $z\in V(\Gamma_{i}(\alpha))$ with $t\neq 4$.
By Lemma \ref{pneqt}, we get $p_{(2,2),(r-1,1)}^{(1,t-1)}=0$ for all $(1,r-1)\in\wz{\partial}(\Gamma)$. In view of Proposition \ref{degree-main} (i), one obtains $d_{\Gamma_i(\alpha)}^+(y)=d_{\Gamma_i(\alpha)}^+(z)$ and $d_{\Gamma_i(\alpha)}^-(y)=d_{\Gamma_i(\alpha)}^-(z)$. Since $d^{+}_{\Gamma_i(\alpha)}(x)>d^{+}_{\Gamma_i(\alpha)}(y)$ and $d^{-}_{\Gamma_i(\alpha)}(x)<d^{-}_{\Gamma_i(\alpha)}(y)$, from Lemma \ref{p=4}, we have $(x,z)\in\Gamma_{1,3}$.
%
Since $x\in P_{(3,1),(1,3)}(y,z)$, from Lemma \ref{intersection numners} (ii), one gets $p_{(1,3),(t-1,1)}^{(1,3)}\neq0$. Without loss of generality, we assume $z\in P_{(1,3),(t-1,1)}(x,y)$.

Note that $y'\in V(\Gamma_i(\beta))$. Since $y\in P_{(t-1,1),(1,3)}(z,y')$,
by the commutativity of $\Gamma$, there exists a vertex $z'\in P_{(1,3),(t-1,1)}(z,y')$. Lemma \ref{match} implies that $z'\in V(\Gamma_i(\beta))$. Since $(x',x)\in\Gamma_{1,3}$, from Lemma \ref{(2,2)}, we obtain $(z',x')\in\Gamma_{1,3}$. By Lemma \ref{jb4}, we have $(y,z')\in\Sigma_2$. Since $(z',x',x,y)$ is a path and $p_{(2,2),(r-1,1)}^{(1,t-1)}=0$ for all $(1,r-1)\in\wz{\partial}(\Gamma)$, one gets $(y,z')\in\Gamma_{2,3}$.

Since $x\in P_{(3,1),(1,3)}(y,z)$, from the commutativity of $\Gamma$, there exists $w\in P_{(1,3),(3,1)}(y,z)$. In view of Lemma \ref{jb4}, we have $w\in V(\Gamma_i(\alpha))$. Since  $z\in P_{(3,1),(1,3)}(z',w)$,  there exists  $w'\in P_{(1,3),(3,1)}(z',w)$. It follows from Lemma \ref{match} that $w'\in V(\Gamma_i(\beta))$. 
Since $(x',x)\in\Gamma_{1,3}$, from Lemma \ref{(2,2)}, we get $(w',x')\in\Gamma_{1,3}$.

From Lemma \ref{jb4}, we have $(z,w')\in\Sigma_2$. Note that $w,z'\in P_{(1,3),(1,3)}(z,w')$ and $p_{(1,3),(1,3)}^{(2,2)}=1$. The fact that $(w',x',x,z)$ is a path implies  $(z,w')\in\Gamma_{2,3}$. Since $z\in P_{(1,t-1),(1,3)}(y,z')$,  there exists  $w''\in P_{(1,t-1),(1,3)}(z,w')$ such that $\Sigma_1(z)\cap\Sigma_1(w')\supseteq\{z',w,w''\}$, contrary to the fact that $c_2=2$.
\end{proof}

\begin{lemma}\label{q=2-1}
If $q=2$, then $k_{1,3}=1$.
\end{lemma}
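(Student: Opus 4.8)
The plan is to first pin down the local shape of the type-$(1,3)$ arcs, and then to rule out the existence of two independent type-$(1,3)$ out-arcs at a single vertex by forcing a distance-$2$ pair with three common neighbours, contradicting $c_2=2$.

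First I would invoke Proposition~\ref{degree*}: since $H(d,2)$ is distance-transitive and $q=2$ (so $a_1=q-2=0$ and $c_2=2$ by Theorem~\ref{hamming}), every arc of $\Gamma$ is of type $(1,1)$ or $(1,3)$, we have $p^{(1,3)}_{(2,2),(3,1)}\neq0$, and $k_{1,1}\in\{0,1\}$; in particular $k_{1,3}\geq1$. Combining $p^{(1,3)}_{(2,2),(3,1)}\neq0$ with Lemma~\ref{intersection numners}(ii) gives $p^{(2,2)}_{(1,3),(1,3)}\neq0$, and then Lemma~\ref{s=t} (applied to the type-$(1,3)$ arcs, so $p=s=4$, case (ii)) yields $p^{(2,2)}_{(1,3),(1,3)}=1$. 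The geometric consequence I would record is that every pair $(x,z)\in\Gamma_{2,2}$ lies in a \emph{unique} directed $4$-cycle $x\to y\to z\to y'\to x$ formed by its two common $\Sigma$-neighbours $y,y'$ (these are all its common neighbours, by $c_2=2$). Equivalently, each arc $x\to y_1$ of type $(1,3)$, with $y_1=x+e_i$, lies in a directed $4$-cycle $x\to y_1\to m_1\to n_1\to x$ on a single $2$-face, returning to $x$ through an in-coordinate $a$ with $n_1=x+e_a$; here $a\neq i$, and if $x$ has a second type-$(1,3)$ out-neighbour $y_2=x+e_j$ then $a\neq j$, since otherwise the arc $n_1\to x$ would read $y_2\to x$, contradicting that $x\to y_2$ has type $(1,3)$.

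The core is a proof by contradiction. Suppose $k_{1,3}\geq2$ and fix $x$ with two type-$(1,3)$ out-neighbours $y_1=x+e_i$, $y_2=x+e_j$ ($i\neq j$); let $w$ be the fourth corner of the $2$-face on coordinates $i,j$. Applying Lemma~\ref{(2,2)} to the two columns of this face (in either coordinate direction) I would first show that neither $w\to y_1$ nor $w\to y_2$ can be an arc of type $(1,3)$: otherwise the lemma would force $y_1\to x$ or $y_2\to x$ to have type $(1,3)$, again contradicting the types of $x\to y_1$ and $x\to y_2$. Treating $k_{1,1}=0$ and $k_{1,1}=1$ separately then fixes the orientation of the edges $y_1w$ and $y_2w$, hence the whole face. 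Next I would propagate these orientations into the adjacent $2$-faces carrying the directed $4$-cycles of $x\to y_1$ and $x\to y_2$, using Lemma~\ref{match} (to place the newly produced vertices in the correct columns) together with Lemma~\ref{(2,2)} and the uniqueness $p^{(2,2)}_{(1,3),(1,3)}=1$. The goal, exactly as in the proof of Lemma~\ref{degree}, is to manufacture a distance-$2$ pair together with three distinct vertices each forced to lie in $\Sigma_1(\cdot)\cap\Sigma_1(\cdot)$ of that pair, which is impossible since $c_2=2$; this contradiction yields $k_{1,3}=1$.

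The hard part is this last step. The assignment of out/in roles to the coordinates is not globally constant (it alternates around each directed $4$-cycle, reflecting the $\mathbb{Z}_4$-behaviour), so following the two directed $4$-cycles into neighbouring $2$-faces produces a branching bookkeeping of coordinates, and one must check that the three common neighbours constructed are genuinely distinct and simultaneously adjacent to a single distance-$2$ pair. Keeping this coordinate bookkeeping under control, and separating the subcases according to whether the two return-coordinates $a,b$ of $C_1,C_2$ coincide and whether $k_{1,1}=0$ or $1$, is where the real work lies.
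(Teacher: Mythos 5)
Your setup is fine and matches the paper's opening moves (Proposition~\ref{degree*}, the deduction $p^{(2,2)}_{(1,3),(1,3)}=1$ via Lemma~\ref{intersection numners}(ii) and Lemma~\ref{s=t}(ii), the observation that each type-$(1,3)$ arc lies in a unique directed $4$-cycle on a $2$-face, and the exclusion of $w\to y_1$, $w\to y_2$ via Lemma~\ref{(2,2)}). But the proof has a genuine gap: it stops exactly where the contradiction has to be produced. Everything after ``the goal \ldots is to manufacture a distance-$2$ pair together with three distinct vertices'' is a statement of intent, not an argument, and you concede this yourself (``this last step \ldots is where the real work lies''). Nothing you have established forces such a configuration to exist, so as written the proposal proves nothing beyond the preliminaries.

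Worse, the endgame you are aiming at is probably unreachable in this setting. You model it on Lemma~\ref{degree}, but the pivotal step there --- ``Since $\Gamma_i(\alpha)$ is semicomplete and $q>2$, there exists $z\in P_{(3,1),(1,3)}(x,y)$'' --- needs a third vertex inside a column, and for $q=2$ every column $\Gamma_i(\alpha)$ has only two vertices; the whole three-common-neighbours mechanism of that proof is unavailable here. The paper's contradiction is of a different nature: assuming two type-$(1,3)$ out-arcs at $o_{[d]}$, it propagates orientations (by commutativity, exactly your face-by-face idea) into the adjacent layer to show $(o_{[d]},(o,a_{[2]},o_{[d-3]}))\in\Gamma_{4,4}$ with $P_{(1,3),(3,1)}(o_{[d]},(o,a_{[2]},o_{[d-3]}))\neq\emptyset$; it then invokes $p^{(1,3)}_{(2,2),(3,1)}\neq0$, which (after ruling out the in-layer candidates using $p^{(2,2)}_{(1,3),(1,3)}=1$) forces $d\geq4$ and drags in a fourth coordinate, yielding a second pair $(o_{[d]},(a_{[4]},o_{[d-4]}))\in\Gamma_{4,4}$ for which $P_{(1,3),(3,1)}$ is trivially empty since the two vertices are at Hamming distance $4$. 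The clash between these two $(4,4)$-pairs violates the constancy of $p^{(4,4)}_{(1,3),(3,1)}$, i.e.\ weak distance-regularity itself, not $c_2=2$. The fact that the obstruction only materializes at two-way distance $(4,4)$, comparing a Hamming-distance-$2$ pair against a Hamming-distance-$4$ pair, strongly suggests your purely local propagation toward a $c_2$ violation cannot close; to repair the proposal you should keep your propagation machinery but switch the target contradiction to an intersection-number inconsistency of this kind.
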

\begin{proof}
Let $S=\{o,a\}$.   According to Theorem \ref{hamming}, we have $a_1=0$ and $c_2=2$. By \cite[Theorem 7.5.2]{AEB98}, $\Sigma$ is distance-transitive.
From   Proposition \ref{degree*}, without loss of generality, we may assume that $(o_{[d]},(a,o_{[d-1]}),(a_{[2]},o_{[d-2]}),(o,a,o_{[d-2]}))$ is a circuit consisting of arcs of type $(1,3)$.

Assume for contrary, namely, $k_{1,3}>1$.
Then $d\geq3$.   Without loss of generality, we assume  $(o_{[d]},(o_{[2]},a,o_{[d-3]}))\in\Gamma_{1,3}$. Since $\Sigma$ is a Hamming graph, we get $((a,o_{[d-1]}),(o_{[2]},a,o_{[d-3]}))\in\Sigma_2$ and $\Sigma_1(a,o_{[d-1]})\cap\Sigma_1(o_{[2]},a,o_{[d-3]})=\{o_{[d]},(a,o,a,o_{[d-3]})\}$. Note that $o_{[d]}\in P_{(3,1),(1,3)}((a,o_{[d-1]}),(o_{[2]},a,o_{[d-3]}))$. The commutativity of $\Gamma$ implies $(a,o,a,o_{[d-3]})\in P_{(1,3),(3,1)}((a,o_{[d-1]}),(o_{[2]},a,o_{[d-3]}))$. By the similar argument, one has $$(u,a,o_{[d-3]})\in P_{(1,3),(3,1)}((u,o_{[d-2]}),(v,a,o_{[d-3]}))$$ for each $(u,v)\in\{(a_{[2]},(a,o)),((o,a),a_{[2]}),(o_{[2]},(o,a))\}.$  Since $$\Sigma_1(o_{[d]})\cap\Sigma_1(o,a_{[2]},o_{[d-3]})=\{(o,a,o_{[d-2]}),(o_{[2]},a,o_{[d-3]})\},$$ one gets $(o_{[d]},(o,a_{[2]},o_{[d-3]}))\in\Gamma_{4,4}$.

Since $(o,a,o_{[d-2]}),(a_{[3]},o_{[d-3]})\in P_{(1,3),(1,3)}((a_{[2]},o_{[d-2]}),(o,a_{[2]},o_{[d-3]})),$ from Lemma \ref{s=t} (ii), one gets $((a_{[2]},o_{[d-2]}),(o,a_{[2]},o_{[d-3]}))\notin\Gamma_{2,2}.$
By Proposition \ref{degree*}, one obtains $p_{(2,2),(3,1)}^{(1,3)}\neq0$.  Then  $d\geq4$. Without loss of generality, we may assume $(a_{[4]},o_{[d-4]})\in P_{(2,2),(3,1)}((a_{[2]},o_{[d-2]}),(a_{[3]},o_{[d-3]})).$ Since $\Sigma$ is the underlying graph of $\Gamma$, we get $(o_{[d]},(a_{[2]},o_{[d-2]}))\in\Gamma_{2,2}$, which implies $(o_{[d]},(a_{[4]},o_{[d-4]}))\in\Gamma_{4,4}$.
One can verify that  $P_{(1,3),(3,1)}(o_{[d]},(a_{[4]},o_{[d-4]}))=\emptyset$ and $(o_{[2]},a,o_{[d-3]})\in P_{(1,3),(3,1)}(o_{[d]},(o,a_{[2]},o_{[d-3]}))$, a contradiction.
\end{proof}

In the notation in Section 3,   recall that the digraph $\Gamma^{[i]}$ with vertex set $S^{i}$ and $(\alpha,\beta)$ is an arc of $\Gamma^{[i]}$ whenever $((\alpha,o_{[d-i]}),(\beta,o_{[d-i]}))$ is an arc of $\Gamma$ for $1\leq i\leq d$. Define the digraph $\Gamma^i$ with vertex set $S$, and $(a,b)$ is an arc of $\Gamma^{i}$ if and only if $((o_{[i-1]},a,o_{[d-i]}),(o_{[i-1]},b,o_{[d-i]}))$ is an arc of $\Gamma$, where $1\leq i\leq d$.

\begin{lemma}\label{product}
If $q>2$, then $\Gamma^{[i]}=\Gamma^{[i-1]}\square\Gamma^{i}$ for $2\leq i\leq d$.
\end{lemma}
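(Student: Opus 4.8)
The plan is to show the two digraphs have the same vertices and then the same arcs, reducing the arc comparison to two \emph{transport} statements asserting that the arc pattern in one coordinate is unaffected by the values of the remaining coordinates. First I would observe that both $\Gamma^{[i]}$ and $\Gamma^{[i-1]}\,\square\,\Gamma^{i}$ carry the vertex set $S^{i}=S^{i-1}\times S$, so only the arc sets must be matched. By Theorem \ref{hamming} the underlying graph of $\Gamma$ is $H(d,q)$, hence every arc of $\Gamma$, and therefore every arc of $\Gamma^{[i]}$, joins two vertices differing in exactly one coordinate. Thus each arc of $\Gamma^{[i]}$ is either of \emph{type $i$}, namely $((\alpha,a),(\alpha,b))$ with $a\neq b$, or of \emph{type $<i$}, namely $((\alpha,a),(\beta,a))$ with $\alpha,\beta\in S^{i-1}$ differing in a single coordinate $j<i$; the definition of the Cartesian product gives the same dichotomy for $\Gamma^{[i-1]}\,\square\,\Gamma^{i}$. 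Matching the two arc sets then reduces the whole lemma to two equivalences: (A) $((\alpha,a,o_{[d-i]}),(\alpha,b,o_{[d-i]}))\in A(\Gamma)$ iff $((o_{[i-1]},a,o_{[d-i]}),(o_{[i-1]},b,o_{[d-i]}))\in A(\Gamma)$; and (B) for $\alpha,\beta$ differing in one coordinate $<i$, $((\alpha,a,o_{[d-i]}),(\beta,a,o_{[d-i]}))\in A(\Gamma)$ iff $((\alpha,o,o_{[d-i]}),(\beta,o,o_{[d-i]}))\in A(\Gamma)$.

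Next I would prove (A) and (B) by changing the irrelevant coordinate one value at a time, so that each step compares two lines (resp.\ two $(i-1)$-dimensional layers) of $\Gamma$ that differ in a single coordinate. Here the hypothesis $q>2$ is indispensable: Lemma \ref{degree} forces every line $\Gamma_i(\gamma)$ to be degree-regular, so by Lemma \ref{proof1} it is itself a weakly distance-regular digraph of diameter $2$ and girth at most $3$, and the unbalanced alternative of Proposition \ref{degree-main}(ii) never occurs. On a pair of parallel lines differing in one coordinate I would then use the rectangle Lemma \ref{(2,2)} to move arcs of type $(1,p-1)$ with $p>2$ from one line to the other, with Lemma \ref{match} certifying that the remaining corner of each rectangle again lies in the target line; this matches both the presence and the two-way distance of the transported arc. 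The case $p=2$ (an edge of $\Gamma$) falls outside Lemma \ref{(2,2)} and would be treated separately, using that every edge of $\Sigma$ supports at least one arc of $\Gamma$ together with the balanced-degree conclusion of Proposition \ref{degree-main}(i) and the fixed weakly distance-regular type of each line from Lemma \ref{proof1}.

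The hard part will be converting Lemma \ref{(2,2)} into a genuine one-directional transport. As stated it is a two-sided closure condition: it yields two parallel sides of a rectangle only after the other two \emph{antiparallel} sides are known to be type-$(1,p-1)$ arcs, so feeding it a single known arc is circular, and the naive setup in fact forces its conclusion to assert the reverse of that same arc, which is impossible for $p>2$. To break this circularity I expect to argue globally: using the commutativity of $\Gamma$ and Lemma \ref{intersection numners}(ii) to pass between $p^{\tilde h}_{\tilde i,\tilde j}$ and $p^{\tilde h}_{\tilde j,\tilde i}$, I would show that the number of type-$(1,p-1)$ arcs leaving a vertex in each fixed coordinate direction is independent of the vertex, and then combine this constancy with the connectivity of the grid of parallel lines to force neighbouring lines and layers to share the same arc pattern. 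Lemmas \ref{degree-2} and \ref{degree-3} provide the distance-$2$ bookkeeping needed to track arc types through this propagation, while Lemma \ref{pneqt} together with Lemma \ref{s=t} confines the admissible values of $p$ to $\{2,3,4\}$ and fixes the relevant intersection numbers, ensuring that types, and not merely adjacencies, are preserved.
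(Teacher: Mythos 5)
Your reduction is exactly the skeleton of the paper's proof: same vertex set, the Hamming dichotomy of arcs into last-coordinate arcs and layer arcs, and your transport statements (A) and (B) are precisely the paper's Steps \ref{alpha} and \ref{e=f}. But the entire content of the lemma lies in actually performing the transport, and there your proposal has a genuine gap that you yourself half-diagnose. You are right that Lemma \ref{(2,2)} is unusable here: its hypothesis already demands two antiparallel cross arcs of type $(1,p-1)$, which is the very thing to be produced, so it cannot seed the propagation. The replacement you offer --- showing the number of type-$(1,p-1)$ out-arcs per coordinate direction is vertex-independent and then invoking connectivity of the grid of parallel lines --- is never constructed, and it is not clear it can be: constancy of such counts is a numerical invariance, while (A) and (B) are \emph{pointwise} statements about which arcs, in which direction, join which pairs; nothing in Lemmas \ref{pneqt}, \ref{s=t}, \ref{degree-2}, \ref{degree-3} as you deploy them bridges that gap.

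The missing idea is Lemma \ref{degree-1}, which you never invoke and which is exactly the non-circular one-directional rectangle transport. Given an in-line arc $(x,y)\in\Gamma_{1,p-1}$ and a cross arc $(x,z)\in\Gamma_{1,s-1}$ \emph{sharing the tail} $x$, the unique fourth corner $y'$ (from $c_2=2$ and Lemma \ref{jb4}) satisfies $(y,y')\in\Gamma_{1,s-1}$ and $(z,y')\in\Gamma_{1,p-1}$ --- by commutativity when $p>2$ or $s>2$, and by Lemma \ref{s=t}(i) when $p=s=2$, so the edge case you proposed to "treat separately" is already absorbed here --- with Lemma \ref{match} placing $y'$ in the target line. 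When the two known arcs instead form a path, the paper uses Lemma \ref{degree-2}, whose hypothesis ``$p=2$ or $(x,z)\notin\Gamma_{2,2}$'' is supplied exactly by combining Lemma \ref{degree} (this is where $q>2$ enters, as you correctly anticipated) with Proposition \ref{degree-main}(i): balanced degrees force $p=2$ or $p^{(1,p-1)}_{(2,2),(s-1,1)}=0$ for all $s$, ruling out the $\Gamma_{2,2}$ diagonal. With these two lemmas both implications in (A) and (B) follow by a short case analysis on the direction of the auxiliary in-line arc (which exists by semicompleteness), and your ``one layer at a time'' chain then works as in the paper's Step \ref{alpha}; note also that the chain for (A) needs (B) proved first, so that consecutive layers are known to be joined by arcs at every value of the last coordinate. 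As written, your proposal sets up the correct frame but leaves its load-bearing step unproved.
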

\begin{proof}
Let $\alpha,\beta\in S^{i-1}$. In view of Theorem \ref{hamming}, we have $a_1=q-2$ and $c_2=2$. Note that $((\alpha,e),(\beta,f))\in A(\Gamma^{[i]})$ if and only if $((\alpha,e,o_{[d-i]}),(\beta,f,o_{[d-i]}))\in A(\Gamma)$. Since $\Sigma$ is a Hamming graph, $((\alpha,e),(\beta,f))\in A(\Gamma^{[i]})$ if and only if $e=f$ and $((\alpha,e),(\beta,e))\in A(\Gamma^{[i]})$, or $\alpha=\beta$  and $((\alpha,e),(\alpha,f))\in A(\Gamma^{[i]})$ for all $\alpha,\beta\in S^{i-1}$ and $e,f\in S$. We prove this lemma step by step.


\begin{step}\label{e=f}
{\rm For each $e\in S$,  $(\alpha,\beta)\in A(\Gamma^{[i-1]})$ whenever $((\alpha,e),(\beta,e))\in A(\Gamma^{[i]})$.}
\end{step}

Note that $(\alpha,\beta)\in A(\Gamma^{[i-1]})$ if and only if $((\alpha,o_{[d-i+1]}),(\beta,o_{[d-i+1]}))\in A(\Gamma)$. Let $a\in S$ with $o\neq a$.
Observe that $\Gamma_i(\alpha,o_{[d-i]})$ and $\Gamma_i(\beta,o_{[d-i]})$ are semicomplete. Then $((\alpha,o_{[d-i+1]}),(\alpha,a,o_{[d-i]}))$, $((\beta,o_{[d-i+1]}),(\beta,a,o_{[d-i]}))\in\Sigma_1$. 
 Without loss of generality, we may assume $((\alpha,o_{[d-i+1]}),(\alpha,a,o_{[d-i]}))\in A(\Gamma).$
Since $\Sigma$ is a Hamming graph, one gets $((\alpha,o_{[d-i+1]}),(\beta,o_{[d-i+1]}))\in\Sigma_1$ if and only if $((\alpha,a,o_{[d-i]}),(\beta,a,o_{[d-i]}))\in\Sigma_1$.

Suppose that $((\alpha,o_{[d-i+1]}),(\beta,o_{[d-i+1]}))$ is an arc in $\Gamma$. It follows that $((\alpha,a,o_{[d-i]}),(\beta,a,o_{[d-i]}))\in\Sigma_1$.   In view of Lemma \ref{jb4}, one has $(\beta,o_{[d-i+1]})\in\Sigma_2(\alpha,a,o_{[d-i]})$. The fact $c_2=2$ implies $\Sigma_1(\beta,o_{[d-i+1]})\cap\Sigma_1(\alpha,a,o_{[d-i]})=\{(\alpha,o_{[d-i+1]}),(\beta,a,o_{[d-i]})\}$. By Lemma \ref{degree-1}, we get $((\alpha,a,o_{[d-i]}),(\beta,a,o_{[d-i]}))\in A(\Gamma)$.

Suppose that $((\alpha,a,o_{[d-i]}),(\beta,a,o_{[d-i]}))$ is an arc in $\Gamma$. It follows that $((\alpha,o_{[d-i+1]}),(\beta,o_{[d-i+1]}))\in\Sigma_1$. In view of Lemma \ref{jb4}, we have $(\beta,a,o_{[d-i]})\in\Sigma_2(\alpha,o_{[d-i+1]})$. The fact $c_2=2$ implies $\Sigma_1(\alpha,o_{[d-i+1]})\cap\Sigma_1(\beta,a,o_{[d-i]})=\{(\alpha,a,o_{[d-i]}),(\beta,o_{[d-i+1]})\}$. According to Proposition \ref{degree-main} and Lemma  \ref{degree}, one obtains $((\alpha,o_{[d-i+1]}),(\alpha,a,o_{[d-i]}))\in\Gamma_{1,1}$ or $((\alpha,o_{[d-i+1]}),(\beta,a,o_{[d-i]}))\notin\Gamma_{2,2}$. In view of Lemma \ref{degree-2} (i), we get $((\alpha,o_{[d-i+1]}),(\beta,o_{[d-i+1]}))\in A(\Gamma)$.

By above argument, $((\alpha,o_{[d-i+1]}),(\beta,o_{[d-i+1]}))$ is an arc in $\Gamma$ if and only if $((\alpha,a,o_{[d-i]}),(\beta,a,o_{[d-i]}))$ is an arc in $\Gamma$. Since $a\in S\setminus\{o\}$ was arbitrary, we complete the proof of this step.

\begin{step}\label{alpha}
{\rm For each $\alpha\in S^{i-1}$, $(e,f)\in A(\Gamma^{i})$ whenever $((\alpha,e),(\alpha,f))\in A(\Gamma^{[i]})$.}
\end{step}

Let $\gamma\in S^{i-1}$. Note that there exists a sequence $(\alpha_0=\alpha,\alpha_1,\ldots,\alpha_l=\gamma)$ of $V(\Gamma^{[i-1]})$ such that $(\alpha_j,\alpha_{j+1})$ or $(\alpha_{j+1},\alpha_{j})\in A(\Gamma^{[i-1]})$ with $0\leq j\leq l-1$. By Step \ref{e=f}, $((\alpha_j,a),(\alpha_{j+1},a))$ or $((\alpha_{j+1},a),(\alpha_{j},a))\in A(\Gamma^{[i]})$ with $0\leq j\leq l-1$ for all $a\in S$. Therefore, $((\alpha_j,a,o_{[d-i]}),(\alpha_{j+1},a,o_{[d-i]}))$ or $((\alpha_{j+1},a,o_{[d-i]}),(\alpha_{j},a,o_{[d-i]}))\in A(\Gamma)$ with $0\leq j\leq l-1$ for all $a\in S$.

Observe that $\Gamma_i(\alpha_0,o_{[d-i]})$ and $\Gamma_i(\alpha_1,o_{[d-i]})$ are semicomplete digraphs. Then $((\alpha_k,e,o_{[d-i]}),(\alpha_{k},f,o_{[d-i]}))\in\Sigma_1$ for $e\neq f$ and $k\in\{0,1\}$. Without loss of generality, we may  assume
$((\alpha_0,e,o_{[d-i]}),(\alpha_0,f,o_{[d-i]}))\in A(\Gamma)$.  Note that $((\alpha_{0},a,o_{[d-i]}),(\alpha_{1},a,o_{[d-i]}))\in\Sigma_1$ for all $a\in S$. 
In view of Lemma \ref{jb4}, we get  $((\alpha_1,e,o_{[d-i]}),(\alpha_{0},f,o_{[d-i]}))\in\Sigma_2$, and so $\Sigma_1(\alpha_1,e,o_{[d-i]})\cap\Sigma_1(\alpha_{0},f,o_{[d-i]})=\{(\alpha_{0},e,o_{[d-i]}),(\alpha_1,f,o_{[d-i]})\}$ since $c_2=2$.
If $((\alpha_0,e,o_{[d-i]}),(\alpha_1,e,o_{[d-i]}))\in A(\Gamma)$,
by Lemma \ref{degree-1}, then $((\alpha_1,e,o_{[d-i]}),(\alpha_1,f,o_{[d-i]}))\in A(\Gamma)$.  Now we suppose that $((\alpha_1,e,o_{[d-i]}),(\alpha_{0},e,o_{[d-i]}))\in A(\Gamma)$. By Proposition \ref{degree-main} and Lemma \ref{degree}, we get $((\alpha_0,e,o_{[d-i]}),(\alpha_0,f,o_{[d-i]}))\in\Gamma_{1,1}$ or $((\alpha_1,e,o_{[d-i]}),(\alpha_{0},f,o_{[d-i]}))\notin\Gamma_{2,2}$, which implies $((\alpha_1,e,o_{[d-i]}),(\alpha_1,f,o_{[d-i]}))\in A(\Gamma)$ from Lemma \ref{degree-2} (ii).  By repeating this process, we get $((\alpha_j,e,o_{[d-i]}),(\alpha_j,f,o_{[d-i]}))\in A(\Gamma)$ for all $1\leq j\leq l$, and so $((\gamma,e),(\gamma,f))\in A(\Gamma^{[i]})$.

Since $\alpha,\gamma\in S^{i-1}$ were arbitrary, $((o_{[i-1]},e),(o_{[i-1]},f))\in A(\Gamma^{[i]})$ if and only if $((\alpha,e),(\alpha,f))\in A(\Gamma^{[i]})$ for all $\alpha\in S^{i-1}$. Observe that $(e,f)\in A(\Gamma^{i})$ whenever $((o_{[i-1]},e),(o_{[i-1]},f))\in A(\Gamma^{[i]})$. It follows that $(e,f)\in A(\Gamma^{i})$ if and only if $((\alpha,e),(\alpha,f))\in A(\Gamma^{[i]})$ for all $\alpha\in S^{i-1}$.

\begin{step}
{\rm $\Gamma^{[i]}=\Gamma^{[i-1]}\square\Gamma^{i}$ for $2\leq i\leq d$.}
\end{step}

Observe that $((\alpha,e),(\beta,f))\in A(\Gamma^{[i]})$ whenever $e=f$ and $((\alpha,e),(\beta,e))\in A(\Gamma^{[i]})$, or $\alpha=\beta$  and $((\alpha,e),(\alpha,f))\in A(\Gamma^{[i]})$ for all $\alpha,\beta\in S^{i-1}$ and $e,f\in S$. By Steps \ref{e=f} and \ref{alpha}, $((\alpha,e),(\beta,f))\in A(\Gamma^{[i]})$ if and only if $e=f$ and $(\alpha,\beta)\in A(\Gamma^{[i-1]})$, or $\alpha=\beta$  and $(e,f)\in A(\Gamma^i)$ for all $\alpha,\beta\in S^{i-1}$ and $e,f\in S$.  Hence, $\Gamma^{[i]}=\Gamma^{[i-1]}\square\Gamma^{i}$.
\end{proof}

We are now ready to prove Theorem \ref{main}.

%
%

\begin{proof}[Proof of Theorem~\ref{main}]
The proof of the sufficiency is straightforward by \cite[Theorem 1.2]{HS04}, \cite[Proposition 2.7]{KSW03} and Proposition \ref{sufficient}. We now prove the necessity.  In view of Theorem \ref{hamming}, we have $a_1=q-2$ and $c_2=2$. Suppose $q=2$. From \cite[Theorem 7.5.2]{AEB98}, $\Sigma$ is distance-transitive. It follows from Proposition \ref{degree*} and Lemma \ref{q=2-1}  that the valency of $\Gamma$ is not more than $2$, which implies that $\Gamma$ is isomorphic to one of the digraphs in Theorem \ref{main} (i) and (ii) by \cite[Theorems 1.2~and 4.1]{HS04}.

We only need to consider the case $q>2$. By  using Lemma \ref{product} repeatedly, we obtain $\Gamma=\Gamma^{1}\square\Gamma^2\square\cdots\square\Gamma^{d}$.
In view of Lemma \ref{degree}, one gets $d^+_{\Gamma_i(o_{[d-1]})}(x)=d^+_{\Gamma_i(o_{[d-1]})}(y)$ for all $x,y\in V(\Gamma_i(o_{[d-1]}))$ with $1\leq i\leq d$.
Note that $\Gamma^i$ is isomorphic to $\Gamma_i(o_{[d-1]})$ for $1\leq i\leq d$. Lemma \ref{proof1} implies that $\Gamma^i$ is a weakly distance-regular digraph of diameter $2$ and girth $g\leq3$ with the same intersection numbers for $1\leq i\leq d$. If $g=3$, then $\Gamma$ is isomorphic to the digraph in Theorem \ref{main} (iv).

Now we suppose $g=2$. Assume the contrary, namely, $d>2$. Let $\wz{\partial}_{\Gamma^1}(o,b)=(1,2)$, $\wz{\partial}_{\Gamma^2}(o,c)=(2,1)$ and $\wz{\partial}_{\Gamma^1}(o,a)=\wz{\partial}_{\Gamma^2}(o,a')=\wz{\partial}_{\Gamma^3}(o,a'')=(1,1)$
for $a,a',a'',b,c\in S$. Note that $\Gamma=\Gamma^1\square\Gamma^2\square\cdots\square\Gamma^{d}$. It follows that $(a,a',a'',o_{[d-3]}),(b,c,o_{[d-2]})\in\Gamma_{3,3}(o_{[d]}).$ One can verify that $$(b,o_{[d-1]})\in P_{(1,2),(2,1)}(o_{[d]},(b,c,o_{[d-2]}))~\mbox{and}~P_{(1,2),(2,1)}(o_{[d]},(a,a',a'',o_{[d-3]}))=\emptyset.$$ This is a contradiction. Thus, $\Gamma$ is isomorphic to one of the digraphs in Theorem \ref{main} (iii).

This completes the proof of  Theorem \ref{main}.
\end{proof}

\section{Proof of Theorem \ref{main3}}

In this section, let   $\Sigma$ be the folded $n$-cube $\square_{n}$ with $n\geq2$.  Note that the induced subgraph of $\Sigma$ on the vertex set $S^{n-2}\times\{a\}$ is a Hamming graph for all $a\in S$.

To prove Theorem \ref{main3}, we need some auxiliary results.

\begin{lemma}\label{k11}
Let $n\geq5$.
Suppose that $x$ and $y$ are antipodal vertices in $\Sigma$. If  $(x,y)\in\Gamma_{1,1}$, then $(u,v)\in\Gamma_{1,1}$ for all antipodal vertices $u,v$.
\end{lemma}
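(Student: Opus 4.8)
<br>

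The plan is to reduce the statement about arbitrary antipodal pairs to the single given pair $(x,y)$ by exploiting the distance-transitivity of the folded $n$-cube together with the local structure already developed in Lemmas \ref{jb4}--\ref{degree-3}. The folded $n$-cube $\square_n$ is distance-transitive, and in particular its automorphism group acts transitively on antipodal pairs (these are exactly the pairs at distance $\lfloor n/2 \rfloor$ when $n$ is even, or can be reached through a chain of local moves when $n$ is odd). However, since $\Gamma$ is an \emph{orientation} and need not inherit the full automorphism group of $\Sigma$, I cannot simply transport $(x,y)$ to $(u,v)$ by a graph automorphism. So the real work is to propagate the type-$(1,1)$ property along edges and through the Hamming subgraphs $S^{n-2}\times\{a\}$.

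First I would fix the antipodal pair $(x,y)$ with $(x,y)\in\Gamma_{1,1}$ and aim to show that \emph{any} antipodal pair sharing a vertex with, or adjacent to, this one is again of type $(1,1)$; iterating along the connectivity of $\Sigma$ then yields the claim for all antipodal pairs. The key local tool is that for $n\geq 5$ the neighbourhood structure is rich enough that two antipodal pairs $(x,y)$ and $(x',y')$ with $x,x'$ adjacent and $y,y'$ adjacent sit inside a common small configuration (a folded subcube or a pair of Hamming subgraphs), where Lemmas \ref{match} and \ref{(2,2)} force the two-way distances to agree. Concretely, I would take $x'$ a neighbour of $x$, let $y'$ be the antipode of $x'$, and use the fact that $y'$ is then obtainable from $y$ by a single coordinate change; applying Lemma \ref{jb4} (each outside vertex has at most one neighbour in a semicomplete piece), $c_2=2$, and the commutativity of $\Gamma$, I would pin down $\partial_\Gamma(x',y')$ and $\partial_\Gamma(y',x')$ to both equal $1$. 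The hypothesis $n\geq 5$ is what guarantees enough room: it ensures the diameter $\lfloor n/2\rfloor\geq 2$ and that the relevant Hamming subgraphs $H(n-2,2)$ have diameter at least $3$, so the antipodal matching interacts nontrivially with the intersection-number constraints rather than collapsing.

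The main obstacle I anticipate is controlling the antipodal \emph{matching} edges themselves — the edges introduced between antipodal vertices that are not present in the underlying cube $H(n-1,2)$. These edges behave differently from the Hamming edges, and their two-way-distance type is precisely what the lemma is about. The subtlety is that an antipodal edge $(u,v)$ need not lie inside any single subgraph $\Gamma_i(\alpha)$ of the form used in Section 3, so Lemmas \ref{jb4}--\ref{degree-3} do not apply directly to it. I expect the resolution to require first establishing the claim for a suitable ``generating'' family of antipodal pairs reachable from $(x,y)$ through Hamming edges, then showing via the girth/intersection-number constraints (Lemma \ref{s=t} and Lemma \ref{pneqt}) that if one antipodal edge is of type $(1,1)$ then an antipodal edge sharing a common neighbour must be as well. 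I would set up the propagation so that the whole class of antipodal pairs is connected under the relation ``differ by one Hamming step at each endpoint,'' and then argue that the type cannot change along such a step without violating either $c_2=2$ or commutativity.

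Finally, I would verify that this propagation indeed reaches every antipodal pair: because $\Sigma$ is connected and vertex-transitive, and antipodal partners vary continuously (coordinate-by-coordinate) with their vertices, the graph on antipodal pairs whose edges are single-Hamming-step moves is connected for $n\geq 5$. Combined with the step-invariance of the $(1,1)$ type, this gives $(u,v)\in\Gamma_{1,1}$ for all antipodal $u,v$, completing the proof.
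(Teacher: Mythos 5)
Your overall skeleton---propagate the type-$(1,1)$ antipodal edge along single Hamming steps and use connectivity of the resulting moves---is exactly the paper's strategy: the paper fixes $u$, takes a path $(x_0=x,x_1,\ldots,x_l=u)$ in $\Sigma$ whose consecutive vertices are non-antipodal, and inducts on $l$, transporting the relation from the antipodal pair $(x_{l-1},y_{l-1})$ to $(x_l,y_l)$. But the local step, which is the entire content of the lemma, is never actually carried out in your proposal, and the tools you nominate cannot carry it. Lemmas \ref{match} and \ref{(2,2)} are proved under the standing hypothesis of Section 4 that $\Sigma$ is the Hamming graph $H(d,q)$; an antipodal edge of $\square_n$ joins the two Hamming halves $S^{n-2}\times\{o\}$ and $S^{n-2}\times\{a\}$ and lies in no subdigraph $\Gamma_i(\alpha)$ (as you yourself observe), so neither lemma applies to the configuration at hand. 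More importantly, $c_2=2$ plus commutativity alone do not ``pin down'' $\tilde{\partial}_{\Gamma}(x_l,y_l)$: in the quadrilateral $x_{l-1},x_l,y_l,y_{l-1}$ the two common neighbours of $y_{l-1}$ and $x_l$ are precisely $x_{l-1}$ and $y_l$, and commutativity transfers information from $x_{l-1}$ to $y_l$ only when $\tilde{\partial}_{\Gamma}(y_{l-1},x_{l-1})=(1,1)$ \emph{differs} from $\tilde{\partial}_{\Gamma}(x_{l-1},x_l)$. If the Hamming edge $(x_{l-1},x_l)$ were itself of type $(1,1)$, then $x_{l-1}\in P_{(1,1),(1,1)}(y_{l-1},x_l)$ and commutativity says nothing about $y_l$; your propagation stalls, and Lemmas \ref{s=t} and \ref{pneqt} do not by themselves exclude this case.

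The missing ingredient is Proposition \ref{degree*}. Since $q=2$ gives $a_1=0$ and $c_2=2$ (Theorem \ref{cube}), the induced subgraph on $S^{n-2}\times\{a\}$ is a Hamming graph, and $\square_n$ is distance-transitive, Proposition \ref{degree*} yields that \emph{every} arc of $\Gamma$ is of type $(1,1)$ or $(1,3)$ and that $k_{1,1}\leq1$. The inductive hypothesis $(x_{l-1},y_{l-1})\in\Gamma_{1,1}$ together with $k_{1,1}\leq1$ then forces $(x_{l-1},x_l)\in\Gamma_{1,3}\cup\Gamma_{3,1}$, since $x_l$ is not the antipode of $x_{l-1}$. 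Now $n\geq5$ guarantees $(y_{l-1},x_l)\in\Sigma_2$ (they differ in $n-2$ coordinates), so $c_2=2$ gives $\Sigma_1(y_{l-1})\cap\Sigma_1(x_l)=\{x_{l-1},y_l\}$, and commutativity applied to $x_{l-1}\in P_{(1,1),(1,3)}(y_{l-1},x_l)\cup P_{(1,1),(3,1)}(y_{l-1},x_l)$ forces $(x_l,y_l)\in\Gamma_{1,1}$. Note also that your justification of the hypothesis $n\geq5$ (diameter of $\square_n$ and of the Hamming halves) is off target: it is needed precisely so that the antipode of $x_{l-1}$ lies at distance $2$ from the neighbour $x_l$, which fails for small $n$. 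In short, your connectivity scaffolding matches the paper, but without the arc-type classification of Proposition \ref{degree*} the key local step of your plan would fail.
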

\begin{proof}
Let  $u\in V(\Sigma)$. Since $n\geq5$, there exists a path $(x_0=x,x_1,\ldots,x_l=u)$ in $\Sigma$ such that $x_i$ and $x_{i+1}$ are not antipodal vertices with $0\leq i\leq l-1$. Let $y_{i}$ be the antipodal vertex of $x_{i}$ for $0\leq i\leq l$.  We prove this lemma by induction on $l$. The case that $l=0$ is valid. Suppose $l>0$. By the inductive hypothesis, we have $(x_{l-1},y_{l-1})\in\Gamma_{1,1}$. From Theorem \ref{cube}, one gets $a_1=0$ and $c_2=2$. In view of \cite[Theorem 7.5.2]{AEB98}, $\Sigma$ is distance-transitive. According to Proposition \ref{degree*}, we get $(x_{l-1},x_{l})\in\Gamma_{1,3}\cup\Gamma_{3,1}.$

Let $S=\{o,a\}$. Since $\Sigma$ is distance-transitive, we may assume $(x_{l-1},x_l)=(o_{[n-1]},(a,o_{[n-2]}))$. Then $y_{l-1}=a_{[n-1]}$ and $y_l=(o,a_{[n-2]})$.  Since $n\geq5$, we have  $(y_{l-1},x_l)\in\Sigma_2$.  The fact that $c_2=2$ implies $\Sigma_1(y_{l-1})\cap\Sigma_1(x_l)=\{x_{l-1},y_l\}$.  Note that $x_{l-1}\in P_{(1,1),(1,3)}(y_{l-1},x_{l})\cup P_{(1,1),(3,1)}(y_{l-1},x_{l})$.  By the commutativity of $\Gamma$, we get  $(x_{l},y_{l})\in\Gamma_{1,1}$.  This completes the proof of this lemma.
\end{proof}

\begin{lemma}\label{q=2-2}
%
We have $n\leq6$.
\end{lemma}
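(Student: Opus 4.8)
The plan is to show that a folded $n$-cube with $n\geq 7$ cannot underlie a commutative weakly distance-regular digraph, thereby forcing $n\leq 6$. Since Theorem \ref{cube} gives $a_1=0$ and $c_2=2$ for every folded $n$-cube, and \cite[Theorem 7.5.2]{AEB98} tells us $\Sigma$ is distance-transitive, Proposition \ref{degree*} is available: every arc is of type $(1,1)$ or $(1,3)$, and moreover $p^{(1,3)}_{(2,2),(3,1)}\neq0$ with $k_{1,1}\in\{0,1\}$. So the two-way distance of each arc is tightly constrained, and the whole structure is governed by how the type-$(1,3)$ arcs and the antipodal matching interact.

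First I would exploit Lemma \ref{k11}: assuming $n\geq 5$, the type of an arc on an antipodal pair is the same for \emph{all} antipodal pairs, so either every antipodal edge is a $(1,1)$-arc or none is. This dichotomy splits the argument. In the case where antipodal pairs carry $(1,1)$-arcs, I would combine $k_{1,1}=1$ (from Proposition \ref{degree*}, since $p^{(1,3)}_{(2,2),(3,1)}\neq0$ forces $k_{1,1}\leq 1$) with the fact that the antipodal matching already supplies one $(1,1)$-neighbour to each vertex; this pins down $\Gamma_{1,1}$ exactly as the antipodal matching. Then all the remaining arcs are of type $(1,3)$, and I would look for a short combinatorial configuration — a circuit or a pair of vertices at $\Sigma$-distance $2$ inside the Hamming subgraph $S^{n-2}\times\{a\}$ — whose intersection numbers become inconsistent with $c_2=2$ once $n$ is large enough. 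Concretely, picking two vertices at distance $2$ and counting, via the commutativity of $\Gamma$ and Lemma \ref{s=t}/Lemma \ref{pneqt}, the vertices in $P_{(1,3),(1,3)}$ or $P_{(1,3),(3,1)}$ should produce a third common $\Sigma$-neighbour, contradicting $c_2=2$, exactly as in the proof of Lemma \ref{q=2-1}.

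For the complementary case, where no antipodal pair is a $(1,1)$-arc, every antipodal edge is of type $(1,3)$, and I would track a circuit of type-$(1,3)$ arcs running through antipodal and non-antipodal steps, using Theorem \ref{hamming} on the Hamming subgraphs $S^{n-2}\times\{a\}$ to compute the common-neighbour sets explicitly (as in Lemma \ref{k11}, where $\Sigma_1(y_{l-1})\cap\Sigma_1(x_l)$ is identified). The antipodal collapse in the folded cube means that two vertices which are far apart in $H(n-1,2)$ become close in $\square_n$, creating extra common neighbours once $n$ is large. I expect that for $n\geq 7$ one can find two vertices $x,y$ with three distinct common $\Sigma$-neighbours that are forced by commutativity and the type constraints to all lie in prescribed relations, violating $c_2=2$.

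The main obstacle will be the bookkeeping in the antipodal/folding geometry: unlike the pure Hamming case of Lemma \ref{q=2-1}, here I must carefully decide, for each pair of coordinates, whether the shortest $\Sigma$-path between two chosen representatives goes ``directly'' or ``through the antipode,'' since the folded metric is $\partial_{\square_n}(u,v)=\min\{\mathrm{wt}(u-v),\,n-1-\mathrm{wt}(u-v)\}$. Getting the threshold sharp at $n=6$ versus $n=7$ requires that the contradiction-producing configuration fit inside the graph only when $n$ is large, so I would verify that the relevant vertices are genuinely at $\Sigma$-distance $2$ (not collapsed to distance $1$ by antipodality) precisely when $n\geq 7$, which is where the hypothesis $n\geq 5$ in Lemma \ref{k11} will need to be upgraded. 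The cleanest route is probably to reduce, via Lemma \ref{k11} and Proposition \ref{degree*}, to a single normalized circuit of $(1,3)$-arcs and then apply the common-neighbour count of Lemmas \ref{jb4}, \ref{degree-3} and \ref{s=t} to reach the $c_2=2$ contradiction.
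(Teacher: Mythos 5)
Your setup matches the paper's first stage: assume $n\geq7$, get $a_1=0$ and $c_2=2$ from Theorem \ref{cube}, invoke distance-transitivity so that Proposition \ref{degree*} applies, normalize the antipodal pairs via Lemma \ref{k11}, and replay the configuration from Lemma \ref{q=2-1} along a circuit of $(1,3)$-arcs. Two corrections, though. First, the contradiction in that replay is not a $c_2=2$ violation: the paper produces two pairs lying in the \emph{same} relation $\Gamma_{4,4}$, namely $(o_{[n-1]},(o,a_{[2]},o_{[n-4]}))$ and $(o_{[n-1]},(a_{[4]},o_{[n-5]}))$, with $P_{(1,3),(3,1)}$ nonempty for one and empty for the other --- an inconsistency in the intersection number $p^{(4,4)}_{(1,3),(3,1)}$, not a third common neighbour. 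Second, and more importantly, this stage only eliminates $n\geq8$: since $\partial_{\square_n}(o_{[n-1]},(a_{[4]},o_{[n-5]}))=\min\{4,n-4\}$, the weight-$4$ pair is forced into $\Gamma_{4,4}$ precisely when $n\geq8$; at $n=7$ the folding collapses this distance to $3$ and the contradiction evaporates. Your ``threshold'' paragraph points the wrong way here: the issue is not whether certain vertices stay at $\Sigma$-distance $2$, but that the entire stage-one configuration degenerates exactly at $n=7$, so the argument you sketch can at best yield $n\leq7$.

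That is the genuine gap: your proposal contains no mechanism for eliminating $n=7$, which occupies the bulk of the paper's proof. There, one first notes that with every arc of type $(1,1)$ or $(1,3)$, the valency count gives $k_{1,3}=(7-k_{1,1})/2$, and integrality forces $k_{1,1}=1$; in particular your Case 2 (no $(1,1)$-arcs anywhere) dies at $n=7$ for this parity reason, not via a common-neighbour count. The paper then uses the antipodal $(1,1)$-matching from Lemma \ref{k11}, derives $p^{(3,3)}_{(1,1),(2,2)}\neq0$, and runs a delicate analysis of explicit paths (splitting on whether $((a_{[4]},o_{[2]}),(a_{[5]},o),a_{[6]})$ or its reverse is a path) to force certain pairs into $\Gamma_{3,4}$ while exhibiting $P_{(1,1),(2,s)}$ empty for one pair and nonempty for another pair in the same relation --- again an intersection-number inconsistency rather than a $c_2=2$ violation. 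Nothing in your toolkit (Lemmas \ref{jb4}, \ref{degree-3}, \ref{s=t} and the $c_2=2$ count) produces this step, so as written the proposal does not reach the stated bound $n\leq6$.
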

\begin{proof}
Let $S=\{o,a\}$. Suppose for the contrary that $n\geq7$.  By Theorem \ref{cube}, we have $a_1=0$ and $c_2=2$. Proposition \ref{degree*} implies $k_{1,1}\in\{0,1\}$.
From    Lemma \ref{k11}, we may assume $(x,y)\in\Gamma_{1,1}$ for all antipodal vertices $x,y$ when  $k_{1,1}=1$.
In view of  Proposition \ref{degree*},  we  assume that $(o_{[n-1]},(a,o_{[n-2]}),(a_{[2]},o_{[n-3]}),(o,a,o_{[n-3]}))$ is a circuit consisting of arcs of type $(1,3)$.

Since $k_{1,1}\in\{0,1\}$ and $n\geq7$, one has $k_{1,3}>1$.
%
%
%
Without loss of generality, we may assume that $(o_{[n-1]},(o_{[2]},a,o_{[n-4]}))\in\Gamma_{1,3}$. Since $\Sigma$ is a folded $n$-cube, we get $((a,o_{[n-2]}),(o_{[2]},a,o_{[n-4]}))\in\Sigma_2$ and $\Sigma_1(a,o_{[n-2]})\cap\Sigma_1(o_{[2]},a,o_{[n-4]})=\{o_{[n-1]},(a,o,a,o_{[n-4]})\}$. Since $o_{[n-1]}\in P_{(3,1),(1,3)}((a,o_{[n-2]}),(o_{[2]},a,o_{[n-4]})),$  one obtains $(a,o,a,o_{[n-4]})\in P_{(1,3),(3,1)}((a,o_{[n-2]}),(o_{[2]},a,o_{[n-4]}))$ from the commutativity of $\Gamma$. By the similar argument, one has $$(u,a,o_{[n-4]})\in P_{(1,3),(3,1)}((u,o_{[n-3]}),(v,a,o_{[n-4]}))$$ for each $(u,v)\in\{(a_{[2]},(a,o)),((o,a),a_{[2]}),(o_{[2]},(o,a))\}.$  Since $$\Sigma_1(o_{[n-1]})\cap\Sigma_1(o,a_{[2]},o_{[n-4]})=\{(o,a,o_{[n-3]}),(o_{[2]},a,o_{[n-4]})\},$$ one gets $(o_{[n-1]},(o,a_{[2]},o_{[n-4]}))\in\Gamma_{4,4}$.

Since $(o,a,o_{[n-3]}),(a_{[3]},o_{[n-4]})\in P_{(1,3),(1,3)}((a_{[2]},o_{[n-3]}),(o,a_{[2]},o_{[n-4]})),$ from Lemma \ref{s=t} (ii), one gets $((a_{[2]},o_{[n-3]}),(o,a_{[2]},o_{[n-4]}))\notin\Gamma_{2,2}.$ By Proposition \ref{degree*}, we have $p_{(2,2),(3,1)}^{(1,3)}\neq0$.
Without loss of generality, we may assume $(a_{[4]},o_{[n-5]})\in P_{(2,2),(3,1)}((a_{[2]},o_{[n-3]}),(a_{[3]},o_{[n-4]})).$ Note that $(o_{[2]},a,o_{[n-4]})\in P_{(1,3),(3,1)}(o_{[n-1]},(o,a_{[2]},o_{[n-4]}))$ and $P_{(1,3),(3,1)}(o_{[n-1]},(a_{[4]},o_{[n-5]}))=\emptyset$. 
Then  $(o_{[n-1]},(a_{[4]},o_{[n-5]}))\notin\Gamma_{4,4}$, and so $n=7$. It follows that $\partial_{\Gamma}(o_{[6]},(a_{[4]},o_{[2]}))=3$ or  $\partial_{\Gamma}((a_{[4]},o_{[2]}),o_{[6]})=3$. According to Theorem \ref{cube} and Proposition \ref{degree*}, we get $k_{1,3}=(7-k_{1,1})/2$, and so $k_{1,1}=1$.

Note that each shortest path of length $3$ from $z$ to $w$   contains an arc of type $(1,1)$ for some $(z,w)\in\{((a_{[4]},o_{[2]}),o_{[6]}),(o_{[6]},(a_{[4]},o_{[2]}))\}$. Since $k_{1,1}=1$, by the commutativity of $\Gamma$, we have $\partial_{\Gamma}((a_{[4]},o_{[2]}),a_{[6]})=2$ or $\partial_{\Gamma}(a_{[6]},(a_{[4]},o_{[2]}))=2$. Without loss of generality, we may assume $((a_{[4]},o_{[2]}),(a_{[5]},o),a_{[6]})$ is a path or $(a_{[6]},(a_{[5]},o),(a_{[4]},o_{[2]}))$ is a path.
Note that $(o_{[6]},(a_{[2]},o_{[4]}))\in\Gamma_{2,2}$. Since $(a_{[6]},o_{[6]})\in\Gamma_{1,1}$, we get $(a_{[6]},(a_{[2]},o_{[4]}))\in\Gamma_{3,3}$, which implies $p^{(3,3)}_{(1,1),(2,2)}\neq0$.

Suppose that $\partial_{\Gamma}((a_{[4]},o_{[2]}),a_{[6]})=2$ and $((a_{[4]},o_{[2]}),(a_{[5]},o),a_{[6]})$ is a path in $\Gamma$. Since $(a_{[6]},o_{[6]},(a,o_{[5]}),(a,o,a,o_{[3]}))$ and $(o_{[6]},(a,o_{[5]}),(a_{[2]},o_{[4]}),(a_{[3]},o_{[3]}))$ are two paths in $\Gamma$, one obtains $\partial_{\Gamma}(a_{[6]},(a,o,a,o_{[3]}))=\partial_{\Gamma}(o_{[6]},(a_{[3]},o_{[3]}))=3$.
Observe that $((a,o,a,o_{[3]}),(a_{[3]},o_{[3]}),(a_{[4]},o_{[2]}),(a_{[5]},o),a_{[6]})$ is a path. It follows that $\partial_{\Gamma}((a,o,a,o_{[3]}),a_{[6]})=3$ or $4$. Note that $(a,o_{[5]}),(o_{[2]},a,o_{[3]})\in P_{(1,3),(1,3)}(o_{[6]},(a,o,a,o_{[3]}))$. If $(a_{[6]},(a,o,a,o_{[3]}))\in\Gamma_{3,3}$, by $p^{(3,3)}_{(1,1),(2,2)}\neq0$ and $k_{1,1}=1$, then $o_{[6]}\in P_{(1,1),(2,2)}(a_{[6]},(a,o,a,o_{[3]}))$, which implies $p^{(2,2)}_{(1,3),(1,3)}\geq2$, contrary to  Lemma \ref{s=t} (ii).
Therefore,  $(a_{[6]},(a,o,a,o_{[3]}))\in\Gamma_{3,4}.$ Since $((a_{[3]},o_{[3]}),(a_{[4]},o_{[2]}),(a_{[5]},o),a_{[6]},o_{[6]})$ is a path, one has $\partial_{\Gamma}((a_{[3]},o_{[3]}),o_{[6]})=3$ or $4$. If $(o_{[6]},(a_{[3]},o_{[3]}))\in\Gamma_{3,3}$, by $p^{(3,3)}_{(1,1),(2,2)}\neq0$ and $k_{1,1}=1$, then $a_{[6]}\in P_{(1,1),(2,2)}(o_{[6]},(a_{[3]},o_{[3]}))$, a contradiction. Hence, $(o_{[6]},(a_{[3]},o_{[3]}))\in\Gamma_{3,4}$. But $P_{(1,1),(2,s)}(o_{[6]},(a_{[3]},o_{[3]}))=\emptyset$ and $o_{[6]}\in P_{(1,1),(2,s)}(a_{[6]},(a,o,a,o_{[3]}))$ with $s=\partial_{\Gamma}((a,o,a,o_{[3]}),o_{[6]})$, a contradiction. Thus, $\partial_{\Gamma}((a_{[4]},o_{[2]}),a_{[6]})\neq2$, and so $\partial_{\Gamma}((a_{[4]},o_{[2]}),o_{[6]})\neq3$.


Note that $(a_{[6]},(a_{[5]},o),(a_{[4]},o_{[2]}))$ is a path and $(o_{[6]},(a_{[4]},o_{[2]}))\in\Gamma_{3,4}$. Observe that $((a_{[5]},o),(a_{[4]},o_{[2]}))\in A(\Gamma)$ and $((a_{[4]},o_{[2]}),(a_{[2]},o_{[4]}))\in\Gamma_{2,2}$. Then $\partial_{\Gamma}((a_{[5]},o),(a_{[2]},o_{[4]}))=3$.  Since $((a_{[2]},o_{[4]}),(o,a,o_{[4]}),o_{[6]},a_{[6]},(a_{[5]},o))$ is a path, we obtain  $\partial_{\Gamma}((a_{[2]},o_{[4]}),(a_{[5]},o))=3$ or $4$. If $((a_{[5]},o),(a_{[2]},o_{[4]}))\in\Gamma_{3,3}$, by $p^{(3,3)}_{(1,1),(2,2)}\neq0$ and $k_{1,1}=1$, then  $(o_{[5]},a)\in P_{(1,1),(2,2)}((a_{[5]},o),(a_{[2]},o_{[4]}))$,  a contradiction.  Thus, $((a_{[5]},o),(a_{[2]},o_{[4]}))\in\Gamma_{3,4}.$  But $P_{(1,1),(2,t)}((a_{[5]},o),(a_{[2]},o_{[4]}))=\emptyset$ and  $a_{[6]}\in P_{(1,1),(2,t)}(o_{[6]},(a_{[4]},o_{[2]}))$ with $t=\partial_{\Gamma}((a_{[4]},o_{[2]}),a_{[6]})$, which is impossible. This completes the proof of this lemma.
%
\end{proof}

Now we are ready to prove Theorem~\ref{main3}.

\begin{proof}[Proof of Theorem~\ref{main3}]
Note that $n\geq2$. According to Lemma \ref{q=2-2}, we obtain $n\leq6$. If $n=6$, from Theorem \ref{hamming}, \cite[Theorem 7.5.2]{AEB98} and Proposition \ref{degree*}, then $k_{1,3}=3$, contrary to \cite[Theorem 1.1]{YYF18}. Therefore, $n\leq5$. By  \cite{H}, $\Gamma$ is isomorphic to {\rm Cay}$(\mathbb{Z}_4,\{1,2\})$. Thus, $n=2$.
\end{proof}

\section{Proof of Theorem \ref{main2}}


In this section, without loss of generality, we may assume $S=\mathbb{Z}_4$, where $\mathbb{Z}_{q}=\{0,1,\ldots,q-1\}$.  Let  $\Sigma$ be the Doob graph  $H(d-2d_1,4)\,\square\underbrace{\Lambda\,\square\,\Lambda\,\square\ldots\square\,\Lambda}_{d_1}$, where $\Lambda\simeq{\rm Cay}(\mathbb{Z}_4^2,\{(\pm1,0),(0,\pm1),\pm(1,1)\})$ and $d\geq2d_1>0$.


To prove Theorem \ref{main2}, we need some lemmas.

\begin{lemma}\label{a1}
Let  $(x,z)\in\Sigma_1$ and $\Sigma_1(x)\cap\Sigma_1(z)=\{y,y'\}$. If $\tilde{\partial}_{\Gamma}(x,y)\neq\tilde{\partial}_{\Gamma}(y',z)$, then $\tilde{\partial}_{\Gamma}(y,z)=\tilde{\partial}_{\Gamma}(x,y)$ and $\tilde{\partial}_{\Gamma}(x,y')=\tilde{\partial}_{\Gamma}(y',z)$.
%
%
\end{lemma}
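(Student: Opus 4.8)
The plan is to exploit the commutativity of $\Gamma$ together with the fact that $y$ and $y'$ are the \emph{only} common neighbours of the adjacent pair $x,z$ in $\Sigma$. Write $\tilde{\partial}_{\Gamma}(x,z)=\tilde{h}$, and record the two $2$-paths through the common neighbours as $\tilde{\partial}_{\Gamma}(x,y)=\tilde{a}$, $\tilde{\partial}_{\Gamma}(y,z)=\tilde{b}$, $\tilde{\partial}_{\Gamma}(x,y')=\tilde{d}$, and $\tilde{\partial}_{\Gamma}(y',z)=\tilde{c}$; the hypothesis is $\tilde{a}\neq\tilde{c}$, and the two desired conclusions are $\tilde{b}=\tilde{a}$ and $\tilde{d}=\tilde{c}$. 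The crucial elementary observation I would isolate first is that each of $\tilde{a},\tilde{b},\tilde{c},\tilde{d}$ carries a coordinate equal to $1$, since each of the pairs $\{x,y\}$, $\{y,z\}$, $\{x,y'\}$, $\{y',z\}$ is an edge of $\Sigma$. Consequently, if $w$ is any vertex for which both $\tilde{\partial}_{\Gamma}(x,w)$ and $\tilde{\partial}_{\Gamma}(w,z)$ carry a coordinate $1$, then $w\in\Sigma_1(x)\cap\Sigma_1(z)=\{y,y'\}$ (and $w\neq x,z$, since those two-way distances are not $(0,0)$).

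With this reduction in hand, the first conclusion follows from a single application of commutativity. Since $y\in P_{\tilde{a},\tilde{b}}(x,z)$ we have $p^{\tilde{h}}_{\tilde{a},\tilde{b}}\neq0$, hence $p^{\tilde{h}}_{\tilde{b},\tilde{a}}\neq0$, and there is a vertex $w\in P_{\tilde{b},\tilde{a}}(x,z)$; that is, $\tilde{\partial}_{\Gamma}(x,w)=\tilde{b}$ and $\tilde{\partial}_{\Gamma}(w,z)=\tilde{a}$. By the observation above, $w\in\{y,y'\}$. If $w=y'$, then $\tilde{\partial}_{\Gamma}(y',z)=\tilde{a}$, i.e.\ $\tilde{c}=\tilde{a}$, contradicting the hypothesis; hence $w=y$, which forces $\tilde{b}=\tilde{\partial}_{\Gamma}(x,w)=\tilde{\partial}_{\Gamma}(x,y)=\tilde{a}$, giving $\tilde{\partial}_{\Gamma}(y,z)=\tilde{\partial}_{\Gamma}(x,y)$.

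The second conclusion is obtained by the mirror-image argument starting from $y'$: from $y'\in P_{\tilde{d},\tilde{c}}(x,z)$ and commutativity one produces $w'\in P_{\tilde{c},\tilde{d}}(x,z)$, again lying in $\{y,y'\}$; the case $w'=y$ would give $\tilde{a}=\tilde{c}$ and is excluded by the hypothesis, so $w'=y'$ and $\tilde{d}=\tilde{c}$. I do not expect a genuine obstacle here; the only point requiring care is the membership claim $w\in\{y,y'\}$, which is where the hypothesis $\Sigma_1(x)\cap\Sigma_1(z)=\{y,y'\}$ (guaranteed by $a_1=2$ in the Doob graph) and the edge-type coordinate observation do all the work. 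It is worth noting that the argument never uses the full Doob structure beyond this common-neighbour count, nor does it require $y\sim y'$, so it applies uniformly regardless of whether $x,z$ lie in a Hamming or a Shrikhande factor.
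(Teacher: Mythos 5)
Your proposal is correct and takes essentially the same route as the paper's proof: both use the commutativity of $\Gamma$ to produce a vertex realizing the swapped pair of two-way distances, then use $\Sigma_1(x)\cap\Sigma_1(z)=\{y,y'\}$ to force that vertex into $\{y,y'\}$, where the hypothesis $\tilde{\partial}_{\Gamma}(x,y)\neq\tilde{\partial}_{\Gamma}(y',z)$ eliminates one alternative. Your explicit coordinate-$1$ observation (that a two-way distance containing a $1$ is equivalent to adjacency in $\Sigma$) is just a spelled-out version of what the paper's terse argument leaves implicit.
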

\begin{proof}
Let   $y\in P_{(a,b),(a',b')}(x,z)$. Suppose $(a,b)\neq(a',b')$. Since $\Sigma_1(x)\cap\Sigma_1(z)=\{y,y'\}$ and  $\Gamma$ is commutative, we get $y'\in P_{(a',b'),(a,b)}(x,z)$,  contrary to the fact that $\tilde{\partial}_{\Gamma}(x,y)\neq\tilde{\partial}_{\Gamma}(y',z)$. Thus, $(a,b)=(a',b')$. Similarly,  $\tilde{\partial}_{\Gamma}(x,y')=\tilde{\partial}_{\Gamma}(y',z)$.
%
\end{proof}

Set $d_2=d-2d_1$.
For each $j\in\{1,2,\ldots,d_1\}$ and $a_i\in \mathbb{Z}_{4}$ with $1\leq i\leq d-2$, denote $\Delta_j(a_{1},a_{2},\ldots,a_{d-2})$ the induced subdigraph of $\Gamma$ on the set
\begin{align}
\{(a_1,a_2,\ldots,a_{d_2+2j-2},a,b,a_{d_2+2j-1},a_{d_2+2j},\ldots,a_{d-2})\mid a,b\in \mathbb{Z}_{4}\}.\nonumber
\end{align}

\begin{lemma}\label{a1-2}
Let $x,z\in V(\Delta_{j}(\alpha))$ for $1\leq j\leq d_1$ and $\alpha\in\mathbb{Z}_{4}^{d-2}$. Suppose that $(x,z)\in\Sigma_1$ and $\Sigma_1(x)\cap\Sigma_1(z)=\{y,y'\}$.   If $(x,y)\in\Gamma_{1,p-1}$ and $(z,y')\in\Gamma_{1,t-1}$ for $(p,t)\neq(2,2)$, then $p=t\leq4$ and $p_{(1,p-1),(1,p-1)}^{(2,2)}\neq0$.
%
%
\end{lemma}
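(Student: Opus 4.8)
We are working inside a Doob graph $\Sigma$, and $\Delta_j(\alpha)$ is a subdigraph whose vertex set consists of the $16$ points obtained by varying two adjacent coordinates forming a Shrikhande block. The Shrikhande graph is a strongly regular graph with $\lambda = 2$ (every edge lies in exactly two triangles) and $\mu = 2$ (every non-adjacent pair has exactly two common neighbours). So within $\Delta_j(\alpha)$ we have $a_1 = 2$ and $c_2 = 2$ for the underlying graph. The claim, given an edge $(x,z) \in \Sigma_1$ with common neighbours $\{y,y'\}$ and arc types $(x,y)\in\Gamma_{1,p-1}$, $(z,y')\in\Gamma_{1,t-1}$ with $(p,t)\neq(2,2)$, is that $p = t \le 4$ and $p^{(2,2)}_{(1,p-1),(1,p-1)}\neq 0$.

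**Plan.** First I would record that $(x,z)\in\Sigma_1$ means $(x,z)\in\Gamma_{1,r-1}$ for some $r\ge 2$, and that $y,y'$ are the two triangle-completing vertices (here $a_1=2$ forces there to be exactly two common neighbours that are adjacent to both, consistent with $\{y,y'\}$). The natural move is to determine $\tilde\partial_\Gamma(x,z)$, i.e. the two-way distance of the edge itself. I would apply Lemma~\ref{a1} with the hypothesis $(p,t)\neq(2,2)$ to extract structural information: if $\tilde\partial_\Gamma(x,y)\neq\tilde\partial_\Gamma(y',z)$, then Lemma~\ref{a1} immediately gives $\tilde\partial_\Gamma(y,z)=\tilde\partial_\Gamma(x,y)$ and $\tilde\partial_\Gamma(x,y')=\tilde\partial_\Gamma(y',z)$, which propagates the arc types around the two triangles $xyz$ and $xy'z$. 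The goal is to show both triangles are ``consistently oriented'' so that $(x,z)$ is forced into $\Gamma_{2,2}$, after which Lemma~\ref{s=t} (with $c_2=2$) finishes the job.

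**Key steps in order.** (1) Show $(x,z)\in\Gamma_{2,2}$. The idea is that the two common neighbours $y,y'$ each lie on a directed triangle through $x$ and $z$; analysing the possible arc types on edges $xy,yz,xy',y'z$ together with the edge $xz$, and using commutativity plus $c_2=2$ to rule out configurations that would force a third common neighbour of $x,z$ in $\Sigma_1(x)\cap\Sigma_1(z)$, I expect to conclude that the only consistent possibility compatible with $(p,t)\neq(2,2)$ is $\partial_\Gamma(x,z)=\partial_\Gamma(z,x)=2$, i.e. $(x,z)\in\Gamma_{2,2}$. (2) Apply Lemma~\ref{s=t}: since $c_2=2$ and $(x,z)\in\Gamma_{2,2}$ with $y\in P_{(1,p-1),(1,t-1)}(x,z)$, the ``first statement'' of Lemma~\ref{s=t} gives $p=t$, and then its dichotomy gives $p\in\{2,3,4\}$, hence $p=t\le 4$, together with $p^{(2,2)}_{(1,p-1),(1,p-1)}\neq 0$ (this is exactly case (i) with value $2$ when $p=2$, or case (ii) with value $1$ when $p\in\{3,4\}$). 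Because $(p,t)\neq(2,2)$ was only needed to drive step~(1), the conclusion holds uniformly.

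**Main obstacle.** The delicate part is step~(1), establishing $(x,z)\in\Gamma_{2,2}$. The hypothesis $(p,t)\neq(2,2)$ is precisely what prevents both $y$ and $y'$ from being ``diametrically symmetric'' in the degenerate girth-$2$ way, and I expect the argument to hinge on a careful case split on whether $\tilde\partial_\Gamma(x,y)=\tilde\partial_\Gamma(y',z)$ or not, invoking Lemma~\ref{a1} in the unequal case and commutativity in the equal case, while repeatedly using $a_1=2$ and $c_2=2$ to forbid a spurious third vertex in $\Sigma_1(x)\cap\Sigma_1(z)$. Once $(x,z)\in\Gamma_{2,2}$ is pinned down, everything else is a direct citation of Lemma~\ref{s=t}.
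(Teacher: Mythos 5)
There is a genuine gap, and it sits exactly where you flagged the ``delicate part.'' Your step (1) sets out to prove $(x,z)\in\Gamma_{2,2}$, but this is impossible under the hypotheses: $(x,z)\in\Sigma_1$ means $x$ and $z$ are adjacent in the underlying graph, so there is an arc between them in at least one direction, hence $\partial_\Gamma(x,z)=1$ or $\partial_\Gamma(z,x)=1$, and $(x,z)$ lies in some $\Gamma_{1,r-1}$ or $\Gamma_{r-1,1}$, never in $\Gamma_{2,2}$. Consequently your step (2) also misfires: you write $y\in P_{(1,p-1),(1,t-1)}(x,z)$, but the hypothesis gives $(z,y')\in\Gamma_{1,t-1}$, an arc \emph{from} $z$ to the \emph{other} common neighbour $y'$; nothing places $(y,z)$ in $\Gamma_{1,t-1}$, and in any case Lemma~\ref{s=t} requires a pair in $\Gamma_{2,2}$ at underlying distance $2$, which $(x,z)$ can never be. So the pivotal configuration of your argument is vacuous and the citation of Lemma~\ref{s=t} has no valid input.

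The pair that must be shown to lie in $\Gamma_{2,2}$ is $(y,y')$, not $(x,z)$, and the directions of the two hypothesised arcs are the whole point. Since $\tilde{\partial}_\Gamma(x,y)=(1,p-1)$ and $\tilde{\partial}_\Gamma(y',z)=(t-1,1)$, equality of these two-way distances would force $p=t=2$, excluded by $(p,t)\neq(2,2)$; so Lemma~\ref{a1} applies and propagates the types to $(y,z)\in\Gamma_{1,p-1}$ and $(y',x)\in\Gamma_{1,t-1}$. Now $(y,z,y',x)$ is a circuit of length $4$ in $\Gamma$, which immediately gives $p\leq4$ and exhibits $z\in P_{(1,p-1),(1,t-1)}(y,y')$. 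The Shrikhande structure of $\Delta_j(\alpha)$ guarantees $y,y'\in V(\Delta_j(\alpha))$ and that the two common neighbours of an edge are non-adjacent, so $(y,y')\in\Sigma_2$; the circuit then yields $\partial_\Gamma(y,y')=\partial_\Gamma(y',y)=2$, i.e.\ $(y,y')\in\Gamma_{2,2}$, whence $p^{(2,2)}_{(1,p-1),(1,t-1)}\neq0$, and Lemma~\ref{s=t} (with $c_2=2$ from Theorem~\ref{Doob}) gives $p=t$ and $p^{(2,2)}_{(1,p-1),(1,p-1)}\neq0$. Your instinct to use Lemma~\ref{a1} under $(p,t)\neq(2,2)$ and to finish with Lemma~\ref{s=t} was right; what is missing is the four-circuit construction and the recognition that the $\Gamma_{2,2}$ pair is the pair of common neighbours.
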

\begin{proof}
Note that $\tilde{\partial}_{\Gamma}(x,y)\neq\tilde{\partial}_{\Gamma}(y',z)$. In view of Lemma \ref{a1}, we have  $(y,z)\in\Gamma_{1,p-1}$ and $(y',x)\in\Gamma_{1,t-1}$. Since $(y,z,y',x)$ is a circuit in $\Gamma$, we get $p\leq4$ and $P_{(1,p-1),(1,t-1)}(y,y')\neq\emptyset$.  Note that the underlying graph of $\Delta_j(\alpha)$ is isomorphic to ${\rm Cay}(\mathbb{Z}_4^2,\{(\pm1,0),(0,\pm1),\pm(1,1)\})$.  Then $y,y'\in V(\Delta_j(\alpha))$ and $(y,y')\in\Sigma_2$. It follows that  $(y,y')\in\Gamma_{2,2}$,  and so $p^{(2,2)}_{(1,p-1),(1,t-1)}\neq0$. By Theorem \ref{Doob}, we get $c_2=2$.   Lemma \ref{s=t} implies $p=t$. This completes the proof of this lemma.
\end{proof}

The proof of the following result relies on Lemma \ref{a1-2}.

\begin{lemma}\label{arc}
Let $x_0,x_1\in V(\Delta_{j}(\alpha))$ for $1\leq j\leq d_1$ and $\alpha\in\mathbb{Z}_{4}^{d-2}$. If $(x_0,x_1)\in\Gamma_{1,p-1}$ for $p\geq2$, then $p\leq4$ and $p^{(2,2)}_{(1,p-1),(1,p-1)}\neq0$.
\end{lemma}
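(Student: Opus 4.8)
The plan is to reduce everything to Lemma~\ref{a1-2}, after first pinning down the local geometry. By Theorem~\ref{Doob} the Doob graph $\Sigma$ has $a_1=2$ and $c_2=2$, and the underlying graph of each factor $\Delta_j(\alpha)$ is the Shrikhande graph, which is strongly regular with $\lambda=\mu=2$. Hence the edge $\{x_0,x_1\}$ lies in exactly two triangles, and the two common neighbours of any edge form a non-adjacent pair whose only common neighbours are the two endpoints of that edge; so locally one always sees the ``two triangles glued along a common edge'' picture. A short Cartesian-product argument shows that these common neighbours again differ from $x_0$ only in the $\Lambda$-coordinate block, hence lie in $\Delta_j(\alpha)$, so the entire configuration stays inside the Shrikhande factor where $a_1=2$ and $c_2=2$ are available.

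Given the arc $(x_0,x_1)\in\Gamma_{1,p-1}$, I would pick a common neighbour $w$ of $\{x_0,x_1\}$ and let $y'$ be the second common neighbour of the pair $x_0,w$, so that $(x_0,w)\in\Sigma_1$, $\Sigma_1(x_0)\cap\Sigma_1(w)=\{x_1,y'\}$ and $(x_1,y')\in\Sigma_2$. The intention is to invoke Lemma~\ref{a1-2} with $x=x_0$, $z=w$, $y=x_1$: the arc $(x_0,x_1)$ plays the role of $(x,y)$ and already carries the desired type $(1,p-1)$, so if the ``opposite'' arc $(w,y')$ is an out-arc, say of type $(1,t-1)$, and $(p,t)\neq(2,2)$, then Lemma~\ref{a1-2} at once gives $p=t\le 4$ together with $p^{(2,2)}_{(1,p-1),(1,p-1)}\neq 0$. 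For $p\ge 3$ the restriction $(p,t)\neq(2,2)$ holds automatically, so in that range the whole statement follows the moment one correctly oriented configuration is produced. It is also convenient to note, via Lemma~\ref{intersection numners}(ii), that $p^{(2,2)}_{(1,p-1),(1,p-1)}\neq0$ is equivalent to $p^{(1,p-1)}_{(2,2),(p-1,1)}\neq0$, i.e.\ to finding $z$ with $(x_0,z)\in\Gamma_{2,2}$ and $(x_1,z)\in\Gamma_{1,p-1}$; this reformulation clarifies exactly which witness we are hunting for.

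The hard part will be guaranteeing the orientation of the opposite arc $(w,y')$, since a priori only one of $(w,y')$, $(y',w)$ need be present. To handle this I would run over the two common neighbours $w_1,w_2$ of $\{x_0,x_1\}$ together with their second common neighbours $y'_1,y'_2$; whenever $(w_i,y'_i)$ points the wrong way I would apply Lemma~\ref{a1} to the edge $(x_0,w_i)$ with common neighbours $x_1,y'_i$ to transport the label $(1,p-1)$ onto a neighbouring arc (typically forcing $(x_1,w_i)\in\Gamma_{1,p-1}$), and then restart the construction from this fresh type-$(1,p-1)$ arc inside the same finite Shrikhande square, where the required out-arc becomes $(x_0,w_{3-i})$. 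Chasing this around the bounded local configuration, the only way for every attempt to fail is that $x_0\to x_1\to w_i\to x_0$ closes up into directed triangles; but then $\partial_\Gamma(x_1,x_0)\le2$ forces $p\le3$, so the residual degenerate case is the single value $p=3$, which can be settled by inspecting the forced local orientation directly. I expect this orientation bookkeeping, and the identification of a genuine distance-$(2,2)$ pair within it, to be the only delicate point.

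Finally, the case $p=2$ must be treated by hand, because Lemma~\ref{a1-2} explicitly excludes $(p,t)=(2,2)$. Here $(x_0,x_1)\in\Gamma_{1,1}$, and I would instead work with the non-adjacent pair $w_1,w_2$ of common neighbours, whose only common neighbours are $x_0,x_1$. Using commutativity in the style of Lemma~\ref{a1} on this distance-$2$ pair, I would propagate the symmetric type $(1,1)$ to the surrounding edges of the square and thereby exhibit a distance-$(2,2)$ pair joined by a $2$-path of two type-$(1,1)$ arcs, which is precisely the witness for $p^{(2,2)}_{(1,1),(1,1)}\neq 0$; the bound $p=2\le4$ is then trivial.
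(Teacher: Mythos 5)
Your plan correctly identifies Lemma \ref{a1-2} as the engine, and your preliminary geometry is sound (each edge of the Shrikhande factor lies in exactly two triangles, common neighbours stay inside $\Delta_j(\alpha)$, and the reformulation of $p^{(2,2)}_{(1,p-1),(1,p-1)}\neq0$ via Lemma \ref{intersection numners}(ii) is fine). The genuine gap is exactly where you wave your hands: the ``orientation bookkeeping''. Your transport step needs Lemma \ref{a1}, whose hypothesis $\tilde{\partial}_{\Gamma}(x_0,x_1)\neq\tilde{\partial}_{\Gamma}(y'_i,w_i)$ fails precisely when the reversed arc $(y'_i,w_i)$ has the same type $(1,p-1)$ as the arc you started from -- and this is not a fringe case but the generic one. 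In ${\rm Cay}(\mathbb{Z}_4^2,\{(1,0),(0,1),(-1,-1)\})$, which by Lemma \ref{K4-2}(ii) and Theorem \ref{main2} is the orientation that actually occurs, every arc has type $(1,2)$; taking $x_0=(0,0)$, $x_1=(1,0)$, one gets $w_1=(1,1)$, $y'_1=(0,1)$, $w_2=(0,3)$, $y'_2=(3,3)$, and both opposite arcs point the wrong way with $s=s'=p=3$, so \emph{both} of your attempts fail and \emph{neither} admits the Lemma \ref{a1} transport. Your fallback -- that total failure forces directed triangles through $(x_0,x_1)$, whence $p\leq3$, with the residual $p=3$ case ``settled by inspecting the forced local orientation directly'' -- is an unproved assertion twice over: the implication to directed triangles is not justified, and even granting it, at that stage the local orientation is not determined by your hypotheses, so exhibiting a pair $(u,v)\in\Gamma_{2,2}$ with a middle vertex in $P_{(1,2),(1,2)}(u,v)$ still requires the argument you have omitted. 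The $p=2$ branch has the same defect: ``propagating'' the type $(1,1)$ around the square presupposes control over the neighbouring edge types that the paper only obtains \emph{after} this lemma (in Lemma \ref{K4-2}, which cites it).

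For contrast, the paper closes these cases with a different local configuration: not your two triangles glued along an edge, but the geodesic quadrangle $x_i=\varphi(i,0)$, $i\in\mathbb{Z}_4$, with $(x_0,x_2)\in\Sigma_2$. If $(x_0,x_1,x_2,x_3)$ is a directed circuit, then $(x_0,x_2)\in\Gamma_{2,2}$ and Lemma \ref{s=t} finishes immediately -- this is how the homogeneous orientation above, and the case $p=2$, are absorbed without any propagation. Otherwise some edge is reversed, one may assume $(x_0,x_3)\in\Gamma_{1,l-1}$ with $l>2$, and the parallel edge $y_0y_1$ with $y_i=\varphi(i,1)$ feeds Lemma \ref{a1-2} in one of two orientations; the residual possibility $p\neq l$ is then killed by producing $y_0\in P_{(1,l-1),(l-1,1)}(x_0,x_3)$, contradicting Lemma \ref{0}. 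Your sketch contains no analogue of the directed-circuit case, no use of a distance-$2$ diagonal inside the factor, and no counterpart of the Lemma \ref{0} step, so what remains is a reasonable opening move rather than a proof.
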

\begin{proof}
Fix $j\in\{1,2,\ldots,d_1\}$ and $\alpha\in\mathbb{Z}_4^{d-2}$.    Let $\varphi$ be an isomorphism from ${\rm Cay}(\mathbb{Z}_4^2,\{(\pm1,0),(0,\pm1),\pm(1,1)\})$ to $\Delta_j(\alpha)$. Without loss of generality, we may assume $x_0=\varphi(0,0)$. It follows that $x_1\in\{\varphi(\pm1,0),\varphi(0,\pm1),\pm\varphi(1,1)\}$. Since the proofs are similar, we assume $x_1=\varphi(1,0)$. Let $x_i=\varphi(i,0)$ for  $i\in\{2,3\}$. It follows that $(x_i,x_{i+1})\in\Sigma_{1}$ for  $i\in\mathbb{Z}_4$ and $(x_0,x_2)\in\Sigma_2$. In view of Theorem \ref{Doob}, we have $a_1=2$ and $c_2=2$. Next we divide this proof into two cases according to whether $(x_0,x_1,x_2,x_3)$ is a circuit in $\Gamma$.

\textbf{Case 1.} $(x_0,x_1,x_2,x_3)$ is a circuit in $\Gamma$.

Note that $p\leq4$ and $(x_0,x_2)\in\Gamma_{2,2}$. The fact  $x_1\in P_{(1,p-1),(1,s-1)}(x_0,x_2)$ for some $s\geq2$ implies $p^{(2,2)}_{(1,p-1),(1,s-1)}\neq0$. Since $c_2=2$, by Lemma \ref{s=t}, we get $p=s$, and so $p^{(2,2)}_{(1,p-1),(1,p-1)}\neq0$.

\textbf{Case 2.} $(x_0,x_1,x_2,x_3)$ is not a circuit in $\Gamma$.

Observe that $(x_1,x_2),(x_2,x_3)$ or $(x_3,x_0)$ is not an arc in $\Gamma$.
According to the commutativity of $\Gamma$,  we may assume $(x_3,x_0)\in\Gamma_{l-1,1}$ for  $l>2$. Set  $y_i=\varphi(i,1)$ for $i\in\{0,1\}$.  The fact that $\Sigma$ is a Doob graph implies  $(y_0,y_1)\in\Sigma_1$.
Then $(y_0,y_1)\in\Gamma_{t-1,1}$ or $\Gamma_{1,t-1}$ with $t\geq2$.

\textbf{Case 2.1.} $(y_0,y_1)\in\Gamma_{t-1,1}$ with $t>2$.

Since $\Sigma$ is a Doob graph, one has $(x_0,y_1)\in\Sigma_1$ and $\Sigma_1(x_0)\cap\Sigma_1(y_1)=\{x_1,y_0\}$.   Since  $(x_0,x_1)\in\Gamma_{1,p-1}$, by Lemma \ref{a1-2}, we have $p=t\leq4$ and $p^{(2,2)}_{(1,p-1),(1,p-1)}\neq0$.

\textbf{Case 2.2.} $(y_0,y_1)\in\Gamma_{1,t-1}$ with $t\geq2$.

Since $\Sigma$ is a Doob graph, we get $(x_0,y_0)\in\Sigma_1$ and $\Sigma_1(x_0)\cap\Sigma_1(y_0)=\{x_3,y_1\}$. Note that $(x_0,x_3)\in\Gamma_{1,l-1}$ with $l>2$.  
It follows from  Lemma \ref{a1-2} that  $l=t\leq4$ and $p^{(2,2)}_{(1,l-1),(1,l-1)}\neq0$. By Lemma \ref{a1}, we obtain $(x_3,y_0)\in\Gamma_{1,l-1}$. Observe that $(x_0,y_1)\in\Sigma_1$ and  $\Sigma_1(x_0)\cap\Sigma_1(y_1)=\{x_1,y_0\}$. If $p\neq l$, from Lemma \ref{a1}, then 
$(x_0,y_0)\in\Gamma_{1,l-1}$ since $\tilde{\partial}_{\Gamma}(x_0,x_1)\neq\tilde{\partial}_{\Gamma}(y_0,y_1)$, which implies $p^{(1,l-1)}_{(1,l-1),(l-1,1)}\neq0$ by $y_0\in P_{(1,l-1),(l-1,1)}(x_0,x_3)$, contrary to Lemma \ref{0}. Thus, $p=l$, and so $p\leq4$ and $p^{(2,2)}_{(1,p-1),(1,p-1)}\neq0$.

Since $j$ and $\alpha$ were arbitrary,  the desired result follows.
\end{proof}


\begin{lemma}\label{K4-2}
Exactly one of the  following holds:
\begin{itemize}
\item[{\rm(i)}] $p^{(1,1)}_{(2,2),(1,1)}\neq0$, $p^{(1,1)}_{(1,1),(1,1)}=2$ and $\Delta_j(\alpha)$ is isomorphic to the digraph ${\rm Cay}(\mathbb{Z}_4^2,\{(\pm1,0),(0,\pm1),\pm(1,1)\})$ for $j\in\{1,2,\ldots,d_1\}$ and $\alpha\in\mathbb{Z}^{d-2}_{4}$;

\item[{\rm(ii)}] $p^{(1,2)}_{(2,2),(2,1)}\neq0$, $p^{(2,1)}_{(1,2),(1,2)}=2$ and $\Delta_j(\alpha)$ is isomorphic to the digraph ${\rm Cay}(\mathbb{Z}_4^2,\{(1,0),(0,1),(-1,-1)\})$ for  $j\in\{1,2,\ldots,d_1\}$ and $\alpha\in\mathbb{Z}^{d-2}_{4}$.
\end{itemize}
\end{lemma}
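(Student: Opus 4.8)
The plan is to pin down the orientation of a single copy $\Delta_j(\alpha)$ completely, then transfer the conclusion to all $j,\alpha$ and read off the intersection numbers. By Theorem \ref{Doob} we have $a_1=2$ and $c_2=2$, and the underlying graph of each $\Delta_j(\alpha)$ is the Shrikhande graph: it is locally a hexagon (the six neighbours of any vertex induce a $6$-cycle) and every edge lies in exactly two triangles. Note also that, since $x,z\in V(\Delta_j(\alpha))$ are joined by a single-block edge, both of their common neighbours again lie in $\Delta_j(\alpha)$, so the diamond lemmas apply internally. Fixing the isomorphism $\varphi$ from ${\rm Cay}(\mathbb{Z}_4^2,\{(\pm1,0),(0,\pm1),\pm(1,1)\})$ onto $\Delta_j(\alpha)$ as in the proof of Lemma \ref{arc}, Lemma \ref{arc} already tells us that every arc $(x_0,x_1)\in\Gamma_{1,p-1}$ of $\Delta_j(\alpha)$ has $p\in\{2,3,4\}$ and $p^{(2,2)}_{(1,p-1),(1,p-1)}\neq0$.

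The core step is a triangle dichotomy: every triangle of $\Delta_j(\alpha)$ is either fully symmetric (all three edges of type $(1,1)$) or a directed $3$-cycle (all three arcs of type $(1,2)$). The tool is the diamond consisting of an edge $\{x,z\}$ together with its two common neighbours $y,y'$ — the apices of the two triangles on $\{x,z\}$ — which is exactly the configuration of Lemmas \ref{a1} and \ref{a1-2}, and in which $y\not\sim y'$. Applying Lemma \ref{a1-2} to such a diamond forces any two oriented side-arcs $(x,y)$, $(z,y')$ to share a common type $p=t\le4$ with $p^{(2,2)}_{(1,p-1),(1,p-1)}\neq0$, and Lemma \ref{s=t} fixes the count. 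Running through the orientations of a triangle, a transitive orientation, or one mixing a symmetric edge with oriented ones, would either produce a second forward common neighbour of a distance-$2$ pair (contradicting $p^{(2,2)}_{(1,p-1),(1,p-1)}\in\{1,2\}$) or contradict Lemma \ref{0}; the type-$(1,3)$ alternative is excluded the same way, leaving exactly $p\in\{2,3\}$.

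Next I would propagate the dichotomy across the whole copy and reconstruct the orientation. Two consecutive edges $\{x,n_i\}$, $\{x,n_{i+1}\}$ of the hexagon at $x$ lie in the triangle $\{x,n_i,n_{i+1}\}$, so in the oriented case the triangle dichotomy forces the out/in status of the neighbours of $x$ to alternate around the hexagon, whence $d^+_{\Delta_j(\alpha)}(x)=d^-_{\Delta_j(\alpha)}(x)=3$ at every vertex; in the symmetric case every edge is a $2$-cycle. Since $\Delta_j(\alpha)$ is connected and every edge lies in two triangles, one symmetric edge forces all edges symmetric and one oriented edge forces the alternating pattern everywhere. In the symmetric case $\Delta_j(\alpha)$ is the undirected Shrikhande graph ${\rm Cay}(\mathbb{Z}_4^2,\{(\pm1,0),(0,\pm1),\pm(1,1)\})$, which is case (i); in the oriented case the rigid alternating local structure determines the orientation up to isomorphism as ${\rm Cay}(\mathbb{Z}_4^2,\{(1,0),(0,1),(-1,-1)\})$, which is case (ii).

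Finally I would extract the intersection numbers and global uniformity. In case (i), Lemma \ref{s=t}(i) gives $p^{(1,1)}_{(1,1),(1,1)}=2$, and the diamond over a symmetric edge together with Lemma \ref{intersection numners}(ii) yields $p^{(1,1)}_{(2,2),(1,1)}\neq0$; in case (ii) the directed triangles give $p^{(1,2)}_{(2,2),(2,1)}\neq0$ and, with Lemma \ref{s=t}, $p^{(2,1)}_{(1,2),(1,2)}=2$. Because these are intersection numbers of the single scheme $\mathfrak{X}(\Gamma)$, they are independent of the copy, so all $\Delta_j(\alpha)$ lie in the same case; for a fixed $j$ this can also be seen directly by propagating the type through the cross-copy diamonds between $\Delta_j(\alpha)$ and $\Delta_j(\alpha')$. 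The main obstacle is the triangle dichotomy of the second paragraph: systematically excluding the transitive, mixed, and type-$(1,3)$ triangle orientations through the hexagon-local configuration requires deploying Lemmas \ref{a1}--\ref{arc}, \ref{s=t} and \ref{0} in combination, and this is the heart of the argument.
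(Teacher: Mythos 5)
Your toolbox is the right one and your skeleton runs parallel to the paper's, but the two pivotal steps are asserted rather than proved, and you flag the first of them yourself as ``the main obstacle.'' The triangle dichotomy in your second paragraph is a one-sentence case sweep, not an argument, and the lemma citations attached to it do not do what you claim. In particular, Lemma \ref{s=t}(i) concerns pairs in $\Gamma_{2,2}$ and yields $p^{(2,2)}_{(1,1),(1,1)}=2$; it does \emph{not} give $p^{(1,1)}_{(1,1),(1,1)}=2$. In case (i) that equality comes from $a_1=2$ together with the fact that both common neighbours of an edge of $\Delta_j(\alpha)$ lie inside the copy, all of whose arcs have type $(1,1)$. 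Likewise $p^{(2,1)}_{(1,2),(1,2)}=2$ is not ``Lemma \ref{s=t}'' (which gives $p^{(2,2)}_{(1,2),(1,2)}=1$): the paper first kills $p^{(1,p-1)}_{(1,p-1),(p-1,1)}$ by Lemma \ref{0} and commutativity, deduces $p^{(1,p-1)}_{(1,p-1),(1,p-1)}=0$ by Lemma \ref{intersection numners}(ii), and then $a_1=2$ with closure of common neighbours forces $p^{(1,p-1)}_{(p-1,1),(p-1,1)}=2$. This last count is also what actually excludes your ``type-$(1,3)$ alternative'': it says both triangles on any oriented arc $(x,y)$ are directed $3$-cycles, so $\partial_\Gamma(y,x)\le 2$ and $p=3$; ``excluded the same way'' does not capture this mechanism.

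The second gap is the global uniformity behind ``exactly one of (i), (ii) holds.'' Constancy of intersection numbers across the scheme does \emph{not} by itself prevent one copy from being symmetric while another is directed: $p^{(1,1)}_{(1,1),(1,1)}=2$ is vacuous on a copy containing no $(1,1)$-arcs. What is needed is Lemma \ref{pneqt} (equivalently, Lemma \ref{s=t} plus $c_2=2$, which show that at most one type $p$ can satisfy $p^{(2,2)}_{(1,p-1),(1,p-1)}\neq0$), combined with Lemma \ref{arc}, which says every arc in every copy has a type with this property; hence all arcs of all $\Delta_j(\alpha)$ share a single type. The paper establishes this \emph{before} any local analysis, which incidentally makes your entire triangle dichotomy a one-liner, since mixed triangles are then impossible a priori. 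Finally, your claim that ``the rigid alternating local structure determines the orientation up to isomorphism'' is exactly the second half of the paper's proof, carried out there by explicit coordinate bookkeeping: using $p^{(2,1)}_{(1,2),(1,2)}=2$ and $c_2=2$ one forces $x_{i+1,i'},x_{i,i'+1},x_{i-1,i'-1}\in\Gamma_{1,2}(x_{i,i'})$ for all $i,i'\in\mathbb{Z}_4$. Your hexagon-propagation idea can be made to work (an alternating choice at one vertex propagates deterministically, leaving two mutually reverse orientations identified by $v\mapsto-v$), but as written it is an assertion, not a proof, and without it case (ii) is not established.
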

\begin{proof}
In view of Theorem \ref{Doob}, we get $a_1=2$ and $c_2=2$. Note that the underlying graph of $\Delta_j(\alpha)$ is isomorphic to ${\rm Cay}(\mathbb{Z}_4^2,\{(\pm1,0),(0,\pm1),\pm(1,1)\})$. Then $\Sigma_1(u)\cap\Sigma_1(v)\subseteq V(\Delta_j(\alpha))$ for all $u,v\in V(\Delta_j(\alpha))$ with $u\neq v$.

Let $x,y\in V(\Delta_{j}(\alpha))$ such that $(x,y)\in\Gamma_{1,p-1}$ with $p\geq2$. According to Lemma \ref{arc}, we have $p\leq4$ and $p^{(2,2)}_{(1,p-1),(1,p-1)}\neq0$. It follows from Lemma \ref{intersection numners} (ii) that $p^{(1,p-1)}_{(2,2),(p-1,1)}\neq0$.   By Lemmas \ref{pneqt} and \ref{arc}, we have $(u,v)\in\Gamma_{1,p-1}$  for all $(u,v)\in A(\Delta_j(\alpha))$, $j\in\{1,2,\ldots,d_1\}$ and $\alpha\in\mathbb{Z}^{d-2}_{4}$.

Suppose $p=2$. Then   $\Delta_j(\alpha)\simeq{\rm Cay}(\mathbb{Z}_4^2,\{(\pm1,0),(0,\pm1),\pm(1,1)\})$ for all $j\in\{1,2,\ldots,d_1\}$ and $\alpha\in\mathbb{Z}^{d-2}_{4}$, and $p^{(1,1)}_{(2,2),(1,1)}\neq0$. Since $a_1=2$ and $\Sigma_1(u)\cap\Sigma_1(v)\subseteq V(\Delta_j(\alpha))$ for all $u,v\in V(\Delta_j(\alpha))$ with $u\neq v$, we have $p^{(1,1)}_{(1,1),(1,1)}=2$. Thus, (i) is valid.

Suppose $p\in\{3,4\}$.   By Lemma \ref{0} and the commutativity of $\Gamma$, one has $p^{(1,p-1)}_{(1,p-1),(p-1,1)}=p^{(1,p-1)}_{(p-1,1),(1,p-1)}=0$. In view of Lemma \ref{intersection numners} (ii), we get $p^{(1,p-1)}_{(1,p-1),(1,p-1)}=0$. Since $a_1=2$ and $\Sigma_1(u)\cap\Sigma_1(v)\subseteq V(\Delta_j(\alpha))$ for all $u,v\in V(\Delta_j(\alpha))$ with $u\neq v$, we obtain  $p^{(1,p-1)}_{(p-1,1),(p-1,1)}=2$, and so $p=3$. It follows that $p^{(1,2)}_{(2,2),(2,1)}\neq0$ and $p^{(1,2)}_{(2,1),(2,1)}=2$. Lemma \ref{intersection numners} (ii) implies $p^{(2,1)}_{(1,2),(1,2)}=2$.

Fix $j\in\{1,2,\ldots,d_1\}$ and $\alpha\in\mathbb{Z}_4^{d-2}$.    Let $\varphi$ be an isomorphism from ${\rm Cay}(\mathbb{Z}_4^2,\{(\pm1,0),(0,\pm1),\pm(1,1)\})$ to $\Delta_j(\alpha)$. Set $x_{i,l}=\varphi(i,l)$ for $i,j\in\mathbb{Z}_4$. Since $\Sigma$ is a Doob graph, one gets $(x_{0,0},x_{1,1})\in\Sigma_1$. Without loss of generality, we may assume  $(x_{0,0},x_{1,1})\in\Gamma_{2,1}$.  Since  $p^{(2,1)}_{(1,2),(1,2)}=2$ and $\Sigma_1(x_{0,0})\cap\Sigma_1(x_{1,1})=\{x_{1,0},x_{0,1}\}$, we obtain  $P_{(1,2),(1,2)}(x_{0,0},x_{1,1})=\{x_{1,0},x_{0,1}\}$. Note that $\Sigma_1(x_{1,0})\cap\Sigma_1(x_{1,1})=\{x_{0,0},x_{2,1}\}.$ Since $p^{(2,1)}_{(1,2),(1,2)}=2$, we have $x_{2,1}\in P_{(1,2),(1,2)}(x_{1,1},x_{1,0})$. The fact $\Sigma_1(x_{1,0})\cap\Sigma_1(x_{2,1})=\{x_{2,0},x_{1,1}\}$  implies $x_{2,0}\in P_{(1,2),(1,2)}(x_{1,0},x_{2,1})$. By the similar argument, we get  $(x_{i,0},x_{i+1,1})\in\Gamma_{2,1}$ and $P_{(1,2),(1,2)}(x_{i,0},x_{i+1,1})=\{x_{i+1,0},x_{i,1}\}$   for all $i\in\mathbb{Z}_4$.

Note that $(x_{1,1},x_{0,1})\in\Gamma_{2,1}$ and $\Sigma_1(x_{1,1})\cap\Sigma_1(x_{0,1})=\{x_{0,0},x_{1,2}\}.$ The fact that $p^{(2,1)}_{(1,2),(1,2)}=2$ implies $x_{1,2}\in P_{(1,2),(1,2)}(x_{1,1},x_{0,1})$.  Since $\Sigma_1(x_{0,1})\cap\Sigma_1(x_{1,2})=\{x_{0,2},x_{1,1}\},$  we get $x_{0,2}\in P_{(1,2),(1,2)}(x_{0,1},x_{1,2})$. By the similar argument, $x_{i+1,i'},x_{i,i'+1},x_{i-1,i'-1}\in\Gamma_{1,2}(x_{i,i'})$ for all $i,i'\in\mathbb{Z}_4$.
%
Hence, $\Delta_j(\alpha)\simeq{\rm Cay}(\mathbb{Z}_4^2,\{(1,0),(0,1),(-1,-1)\})$.
Since $j$ and $\alpha$ were arbitrary, (ii) is valid.
\end{proof}

Note that the induced subgraph of $\Sigma$ on the vertex set $\mathbb{Z}_4^{d_2}\times\{\gamma\}$ is a Hamming graph for all $\gamma\in\mathbb{Z}_4^{2d_1}$. For each $i\in\{1,2,\ldots,d_2\}$ and $a_j\in \mathbb{Z}_{4}$ with $1\leq j\leq d-1$, recall that $\Gamma_{i}(a_{1},a_{2},\ldots,a_{d-1})$ is the induced subdigraph of $\Gamma$ on the set
\begin{align}
\{(a_1,a_2,\ldots,a_{i-1},b,a_{i},a_{i+1},\ldots,a_{d-1})\mid b\in \mathbb{Z}_{4}\}.\nonumber
\end{align}

\begin{lemma}\label{d2=0}
We have $d_2=0$.
\end{lemma}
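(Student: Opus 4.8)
The plan is to assume for contradiction that $d_2\geq1$, so that $\Sigma$ genuinely contains a Hamming factor $H(d_2,4)$ and, for $1\le i\le d_2$, the induced subdigraphs $\Gamma_i(\alpha)$ are semicomplete digraphs on $K_4$. Since $a_1=q-2=2$ and $c_2=2$ for the Doob graph (Theorem \ref{Doob}), the whole machinery of Section 3 applies to these Hamming factors. The first step is to classify each such $\Gamma_i(\alpha)$. By Proposition \ref{complete} it is a semicomplete weakly distance-regular digraph of diameter $2$ and girth $g\leq3$. Girth $3$ is impossible on four vertices, since a girth-$3$ semicomplete weakly distance-regular digraph is a regular tournament and a regular tournament on $4$ vertices would have out-valency $3/2$. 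Hence, combining the valency relation $k_{1,1}+2k_{1,2}=3$ with Lemma \ref{proof1}, each $\Gamma_i(\alpha)$ is either the complete graph $K_4$ (all arcs of type $(1,1)$) or isomorphic to ${\rm Cay}(\mathbb{Z}_4,\{1,2\})$, the latter consisting of a perfect matching of type-$(1,1)$ edges together with a directed $4$-cycle of type-$(1,2)$/$(2,1)$ arcs.

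I would then split according to the dichotomy of Lemma \ref{K4-2}. In case (i), where the Shrikhande factors $\Delta_j(\alpha)$ are undirected (all arcs of type $(1,1)$) and $p^{(1,1)}_{(1,1),(1,1)}=2$, I argue as follows. If some Hamming factor is a ${\rm Cay}(\mathbb{Z}_4,\{1,2\})$, take a matching edge, say of two-way distance $(1,1)$; its two common neighbours are the two vertices of the directed $4$-cycle, which have two-way distances $(1,2)$ and $(2,1)$ from one endpoint, so $P_{(1,1),(1,1)}$ is empty there and the local value is $0\neq2$, a contradiction. If instead every Hamming factor is complete, then together with the undirected Shrikhande factors every arc of $\Gamma$ is of type $(1,1)$, so $\Gamma$ is an undirected graph and its attached scheme is symmetric, contradicting that $\Gamma$ is a (non-symmetric) weakly distance-regular digraph. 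Thus case (i) with $d_2\geq1$ is impossible.

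In case (ii), where the $\Delta_j(\alpha)$ are the oriented Shrikhande graphs with all arcs of type $(1,2)$, $p^{(2,1)}_{(1,2),(1,2)}=2$ and $p^{(1,2)}_{(2,2),(2,1)}\neq0$, I first rule out non-complete Hamming factors. If some $\Gamma_i(\alpha)\cong{\rm Cay}(\mathbb{Z}_4,\{1,2\})$, take a directed-cycle arc and the associated base pair of two-way distance $(2,1)$; its only type-$(1,2)$ out-neighbour inside the factor lands on a matching edge (two-way distance $(1,1)$ to the target), while every type-$(1,2)$ Shrikhande out-neighbour differs from the target in two Doob coordinates and so has forward distance $\geq2$ to it. Hence the local value of $p^{(2,1)}_{(1,2),(1,2)}$ is $0\neq2$, a contradiction. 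Therefore in case (ii) all Hamming factors must be complete, which forces type-$(1,1)$ arcs to appear.

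The remaining situation, case (ii) with all Hamming factors complete, is the main obstacle, and I expect to resolve it by a fusion argument. Using Lemma \ref{pneqt} with $p=3$ together with Lemma \ref{intersection numners}(ii) one gets $p^{(2,2)}_{(1,1),(1,1)}=0$; for $d_2\geq2$ two complete Hamming factors produce an $H(2,4)$-configuration with a type-$(1,1)$ pair at two-way distance $(2,2)$, giving $p^{(2,2)}_{(1,1),(1,1)}\geq2$, a contradiction. The hard case is $d_2=1$, where one must exhibit two base pairs of the same two-way distance $(3,3)$ but with different intersection numbers: a pair inside a single Shrikhande fibre $\Delta_j(\alpha)$ (where two-way distance $(3,3)$ occurs by the oriented-Shrikhande structure) gives $p^{(3,3)}_{(1,1),(2,2)}=0$, since type-$(1,1)$ witnesses are Hamming neighbours and then carry two-way distance $(1,1)+(3,3)=(4,4)$ to the target; whereas a pair obtained by combining a type-$(1,1)$ Hamming edge with a type-$(2,2)$ Shrikhande pair gives $p^{(3,3)}_{(1,1),(2,2)}\geq1$. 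The genuine difficulty here is justifying that these global two-way distances really behave additively across the Hamming and Shrikhande directions (a product-type distance formula), which requires controlling $\partial_\Gamma$ on the mixed configuration rather than inside single factors; I would establish this via the local transfer lemmas \ref{a1}, \ref{a1-2}, \ref{arc} and the $c_2=2$ identifications of common neighbours, analogously to Lemma \ref{product} in the Hamming case. Once the additivity is in place, the clash of $p^{(3,3)}_{(1,1),(2,2)}$ between the two base pairs contradicts the weak distance-regularity of $\Gamma$, completing the proof that $d_2=0$.
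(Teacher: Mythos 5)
Your overall route is the paper's route: classify the Hamming fibres $\Gamma_i(\alpha)$ as $K_4$ or ${\rm Cay}(\mathbb{Z}_4,\{1,2\})$, eliminate the latter against the dichotomy of Lemma \ref{K4-2}, and then derive the final contradiction from two base pairs in $\Gamma_{3,3}$ --- one inside a Shrikhande fibre, one mixing a type-$(1,1)$ Hamming edge with a type-$(2,2)$ Shrikhande pair. (The paper clashes $p^{(3,3)}_{(1,2),(2,1)}$ where you clash $p^{(3,3)}_{(1,1),(2,2)}$; both choices work, and your separate $d_2\geq2$ shortcut is redundant since the $(3,3)$ clash works for every $d_2\geq1$.) The genuine gap is in your very first step. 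Proposition \ref{complete} applies to a digraph that is \emph{itself} weakly distance-regular; $\Gamma_i(\alpha)$ is only an induced subdigraph, and to conclude via Lemma \ref{proof1} that it is weakly distance-regular of diameter $2$ you must first verify the degree hypothesis $d^{+}_{\Gamma_i(\alpha)}(x)=d^{+}_{\Gamma_i(\alpha)}(y)$. You never do, so the irregular scenario of Lemma \ref{p=4} --- a fibre with unequal out-degrees containing type-$(1,3)$ arcs --- is not excluded, and your two-candidate classification, on which every later case rests, is unproven. The paper closes exactly this hole before classifying: Lemma \ref{K4-2} (valid since $d_1>0$) gives $p^{(1,1)}_{(2,2),(1,1)}\neq0$ or $p^{(1,2)}_{(2,2),(2,1)}\neq0$, Lemma \ref{pneqt} then forces $p^{(1,3)}_{(2,2),(3,1)}=0$, and Proposition \ref{degree-main} with Lemma \ref{p=4} yields equal degrees; only then does Lemma \ref{proof1} (plus the classification \cite{H}, or your direct count $k_{1,1}+2k_{1,2}=3$ on four vertices) give $K_4$ or ${\rm Cay}(\mathbb{Z}_4,\{1,2\})$. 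This repair uses only lemmas you already cite, so the gap is fillable, but as written the argument does not stand.

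By contrast, the ``genuine difficulty'' you flag in the $d_2=1$ case is not one, and no product-type distance formula is needed. Since every arc of $\Gamma$ is an edge of $\Sigma$, one always has $\partial_{\Gamma}\geq\partial_{\Sigma}$; combined with explicit paths through the fibres this pins down both base pairs as lying in $\Gamma_{3,3}$ and settles the witness counts. In particular, for the pure-Shrikhande pair $(u,v)$, any $w$ with $(u,w)\in\Gamma_{1,1}$ is a Hamming neighbour of $u$ (in case (ii) all Shrikhande arcs have type $(1,2)$), whence $\partial_{\Sigma}(w,v)=3>2$ and $(w,v)\notin\Gamma_{2,2}$ --- your stated justification ``$(1,1)+(3,3)=(4,4)$'' is the unproven additivity itself, but it is also unnecessary, since $\partial_{\Sigma}(w,v)=3$ already suffices. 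The paper's final paragraph proceeds in just this elementary way, using $c_2=2$ to identify common neighbours and $\Sigma$-distances as lower bounds, with no analogue of Lemma \ref{product}.
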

\begin{proof}
Assume the contrary, namely, $d_2>0$. We claim that $\Gamma_{i}(\beta)\simeq K_4$ or {\rm Cay}$(\mathbb{Z}_4,\{1,2\})$ for $1\leq i\leq d_2$ and $\beta\in\mathbb{Z}^{d-1}_{4}$.    If $(u,v)\in\Gamma_{1,1}$ for all $u,v\in V(\Gamma_{i}(\beta))$ with $u\neq v$, then $\Gamma_{i}(\beta)\simeq K_4$. Now we suppose $(u,v)\in\Gamma_{1,p-1}$ with $p>2$ for some $u,v\in V(\Gamma_{i}(\beta))$.  In view of Theorem \ref{Doob}, we have $a_1=2$ and $c_2=2$. By Lemma \ref{K4-2}, we get $p^{(1,1)}_{(2,2),(1,1)}\neq0$ or $p^{(1,2)}_{(2,2),(2,1)}\neq0$. It follows from Lemma \ref{pneqt} that $p^{(1,s-1)}_{(2,2),(s-1,1)}=0$ for all $s\notin\{2,3\}$.  If $d^+_{\Gamma_i(\beta)}(x)\neq d^+_{\Gamma_i(\beta)}(y)$ for some $x,y\in V(\Gamma_i(\beta))$, by Proposition \ref{degree-main} and Lemma \ref{p=4}, then $p^{(1,3)}_{(2,2),(3,1)}\neq0$, a contradiction.   Thus, $d^+_{\Gamma_i(\beta)}(x)=d^+_{\Gamma_i(\beta)}(y)$ for all $x,y\in V(\Gamma_i(\beta))$.
From Lemma \ref{proof1}, $\Gamma_i(\beta)$ is  weakly distance-regular digraph of diameter $2$ and girth $g\in\{2,3\}$. By  \cite{H}, $\Gamma_{i}(\beta)$ is isomorphic to  {\rm Cay}$(\mathbb{Z}_4,\{1,2\})$. Thus, our claim holds.

Suppose  $\Gamma_{i}(\beta)\simeq{\rm Cay}(\mathbb{Z}_4,\{1,2\})$ for some $i\in\{1,2,\ldots,d_2\}$ and $\beta\in\mathbb{Z}_4^{d-1}$.    Let $\varphi$ be an isomorphism from ${\rm Cay}(\mathbb{Z}_4,\{1,2\})$ to $\Gamma_i(\beta)$. Set $x_{l}=\varphi(l)$ for $l\in\mathbb{Z}_4$. Then $(x_0,x_2)\in\Gamma_{1,1}$ and $x_1\in P_{(1,2),(1,2)}(x_0,x_2)$. It follows that $p^{(1,1)}_{(1,2),(1,2)}\neq0$.  Lemma \ref{intersection numners} (ii) implies $p^{(2,1)}_{(1,1),(1,2)}\neq0$. Let $j\in\{1,2,\ldots,d_1\}$ and $\alpha\in\mathbb{Z}_4^{d-2}$.   By Lemma \ref{K4-2}, if $\Delta_j(\alpha)$ is isomorphic to ${\rm Cay}(\mathbb{Z}_4^2,\{(\pm1,0),(0,\pm1),\pm(1,1)\})$, then $p^{(1,1)}_{(1,1),(1,1)}=2$, which implies $p^{(1,1)}_{(1,2),(1,2)}=0$ since $a_1=2$, a contradiction; if $\Delta_j(\alpha)$ is isomorphic to ${\rm Cay}(\mathbb{Z}_4^2,\{(1,0),(0,1),(-1,-1)\})$, then $p^{(2,1)}_{(1,2),(1,2)}=2$, which implies $p^{(2,1)}_{(1,1),(1,2)}=0$, a contradiction. Hence, $\Gamma_{i}(\beta)\simeq K_4$ for all $1\leq i\leq d_2$ and $\beta\in\mathbb{Z}_4^{d-1}$.

Note that $\Gamma$ is not an undirected graph. By Lemma \ref{K4-2}, there exists an isomorphism $\varphi$ from  ${\rm Cay}(\mathbb{Z}_4^2,\{(1,0),(0,1),(-1,-1)\})$ to $\Delta_1(0_{[d-2]})$.
Without loss of generality, we may assume  $\varphi(i,j)=(0_{[d_2]},i,j,0_{[2d_1-2]})$ for $i,j\in\mathbb{Z}_4$. Then $(0_{[d]},(0_{[d_2]},2,1,0_{[2d_1-2]}))\in\Gamma_{3,3}$ and $(0_{[d]},(0_{[d_2]},2,0_{[2d_1-1]}))\in\Gamma_{2,2}$. Since $(0_{[d]},(1,0_{[d-1]}))\in\Gamma_{1,1}$ and $(0_{[d]},(1,0_{[d_2-1]},2,0_{[2d_1-1]}))\in\Sigma_3$, one obtains $(0_{[d]},(1,0_{[d_2-1]},2,0_{[2d_1-1]}))\in\Gamma_{3,3}$. But $P_{(1,2),(2,1)}(0_{[d]},(1,0_{[d_2-1]},2,0_{[2d_1-1]}))=\emptyset$ and $(0_{[d_2]},1,0_{[2d_1-1]})\in P_{(1,2),(2,1)}(0_{[d]},(0_{[d_2]},2,1,0_{[2d_1-2]}))$, a contradiction.
\end{proof}

Now we are ready to prove Theorem \ref{main2}.

\begin{proof}[Proof of Theorem~\ref{main2}]
The proof of the sufficiency is straightforward by \cite[Theorem 1.1]{YYF18}. We now prove the necessity.   In view of Lemma \ref{d2=0}, we have $d_2=0$. Since $\Gamma$ is not an undirected graph, by Lemma \ref{K4-2}, $\Delta_j(\alpha)$ is isomorphic to  ${\rm Cay}(\mathbb{Z}_4^2,\{(1,0),(0,1),(-1,-1)\})$ for all $j\in\{1,2,\ldots,d\}$ and $\alpha\in\mathbb{Z}^{d-2}_{4}$. It follows that each arc in $\Gamma$ is of type $(1,2)$.
Without loss of generality, we may assume that  $\Delta_1(0_{[d-2]})={\rm Cay}(\mathbb{Z}_4^d,\{(1,0,0_{[d-2]}),(0,1,0_{[d-2]}),(-1,-1,0_{[d-2]})\}).$ Note that $(-1,0,0_{[d-2]})\in P_{(2,1),(2,2)}(0_{[d]},(1,2,0_{[d-2]}))$ and $(0_{[d]},(1,2,0_{[d-2]}))\in\Gamma_{3,3}$. Then $p^{(3,3)}_{(2,1),(2,2)}\neq0$.

Assume the contrary, namely, $d_1>1$. Note that $(0_{[d]},z_1)\in\Gamma_{1,2}$ with $z_1=(1,0,0_{[d-2]})$. Without loss of generality, we may assume   $(z_1,x)\in\Gamma_{2,1}$, where $x=(1,0,a,b,0_{[d-4]})$ for some $a,b\in\mathbb{Z}_4$.  Note that $z_1\in P_{(1,2),(2,1)}(0_{[d]},x)$ and $(0_{[d]},x)\in\Sigma_2$. Since $c_2=2$, we get $\Sigma_1(0_{[d]})\cap\Sigma_1(x)=\{z_1,z_2\}$ with $z_2=(0_{[2]},a,b,0_{[d-4]})$. The commutativity of $\Gamma$ implies $z_2\in P_{(2,1),(1,2)}(0_{[d]},x)$.  Since $c_2=2$,  one obtains $P_{(1,r-1),(1,s-1)}(0_{[d]},x)=P_{(r-1,1),(s-1,1)}(0_{[d]},x)=\emptyset$ for all $r,s\geq2$. Then $(0_{[d]},x)\in\Gamma_{3,3}$.

Since $p^{(3,3)}_{(2,1),(2,2)}\neq0$,  there exists  $y\in P_{(2,1),(2,2)}(0_{[d]},x)$.
It follows that $(0_{[d]},y)\in\Sigma_1$ and  $(y,x)\in\Sigma_2$. Then $y\in V(\Delta_{j'}(0_{[d-2]}))$ for some $j'\in\{1,2\}$. Suppose  $j'=2$. Then $y=(0_{[2]},a',b',0_{[d-4]})$ for some $(a',b')\neq(0,0)$.   Since  $(y,x)\in\Sigma_2$ and $c_2=2$, we have $\Sigma_1(x)\cap\Sigma_1(y)=\{z_2,(1,0,a',b',0_{[d-4]})\}$. The fact that each arc in $\Gamma$ is of type $(1,2)$ implies  $(y,z_2)\in\Gamma_{1,2}\cup\Gamma_{2,1}$. Since $0_{[d]}\in P_{(1,2),(2,1)}(y,z_2)$, we have $p^{(1,2)}_{(1,2),(2,1)}\neq0$, contrary to Lemma \ref{0}. Thus, $j'=1$, and so $y=(a',b',0_{[d-2]})$ for some $(a',b')\neq(0,0)$. Since $y\in\Gamma_{2,1}(0_{[d]})$, one has $(a',b')\neq(1,0)$.

Since $(y,x)\in\Sigma_2$ and $c_2=2$, one obtains $\Sigma_1(y)\cap\Sigma_1(x)=\{z_1,w\}$ with $w=(a',b',a,b,0_{[d-4]})$. According to Lemma \ref{K4-2} (ii), we have  $p^{(1,2)}_{(2,2),(2,1)}\neq0$. By Lemma \ref{intersection numners} (ii) and Lemma \ref{s=t}, one gets $p^{(2,2)}_{(1,2),(1,2)}=p^{(2,2)}_{(2,1),(2,1)}=1$. Since $(y,x)\in\Gamma_{2,2}$ and $(z_1,x)\in\Gamma_{2,1}$, we obtain $P_{(2,1),(2,1)}(y,x)=\{z_1\}$ and $P_{(1,2),(1,2)}(y,x)=\{w\}$. Note that $0_{[d]}\in\Sigma_1(z_2)\cap\Sigma_1(y)$. Since $z_2=(0_{[2]},a,b,0_{[d-4]})$ and $y=(a',b',0_{[d-2]})$, we have $(z_2,y)\in\Sigma_2$. The fact that $c_2=2$ implies $\Sigma_1(z_2)\cap\Sigma_1(y)=\{0_{[d]},w\}$. Note that  each arc in $\Gamma$ is of type $(1,2)$. It follows that $(z_2,w)\in\Gamma_{1,2}\cup\Gamma_{2,1}$. Since $x\in P_{(1,2),(2,1)}(w,z_2)$, we get $p^{(1,2)}_{(1,2),(2,1)}\neq0$, contrary to Lemma \ref{0}.

Therefore, $d_1=1$, the desired result follows.
\end{proof}


\section*{Acknowledgements}

The authors would like to thank the anonymous reviewers for their careful reading of the manuscript of the paper and their invaluable suggestions. Y.~Yang is supported by NSFC (12101575) and the Fundamental Research Funds for the Central Universities (2652019319),  K. Wang is supported by the National Key R$\&$D Program of China (No.~2020YFA0712900) and NSFC (12071039, 12131011).

\section*{Data Availability Statement}

Data sharing not applicable to this article as no datasets were generated or analysed during the current study.

\section*{Conflict of Interests}

The authors declare that they have no conflict of interest.

\end{CJK*}

\end{document}